\newtheorem{theo}{Theorem}[section]
\newtheorem{lemm}{Lemma}[section]
\newtheorem{rema}{Remark}[section]
\newtheorem{exem}{Example}[section]
\def\hls#1{{\rm HLS}(#1)}
\def\HOS#1{{\rm HOS}(#1)}
\def\dist{{\rm dist}}
\title[Hausdorff Leaf Spaces]{Hausdorff Leaf Spaces for codim-1 foliations}
\author{Szymon M. Walczak}
\date{version \today}
\begin{document}

\begin{abstract}
The topology of the Hausdorff leaf spaces (briefly the HLS) for a codim-1 foliation is the main topic of this paper. At the beginning, the connection between the Hausdorff leaf space and a warped foliations is examined.  Next, the author describes the HLS for all basic constructions of foliations such as transverse and tangential gluing, spinning, turbulization, and suspension. Finally, it is shown that the HLS for any codim-1 foliation on a compact Riemannian manifold is isometric to a finite connected metric graph. In addition, the author proves that for any finite connected metric graph $G$ there exists a compact foliated Riemannian manifold $(M,\mathcal{F},g)$ with codim-1 foliation such that the HLS for $\mathcal{F}$ is isometric to $G$. Finally, the necessary and sufficient condition for warped foliations of codim-1 to converge to $\hls{\mathcal{F}}$ is given.
\end{abstract}

\maketitle

\section{Introduction}\label{Introduction}

In the 70-s M. Berger has presented the concept of modification of a Riemannian metric of $S^3$ along the fibers of the Hopf fibration. Following this concept, the author of this paper has introduced the notion of warped foliation \cite{W3}. Later on, the author has examined the limits of a sequence of warped compact foliations \cite{W} and has proposed the notion of the Hausdorff leaf space (briefly the HLS) for a foliation on a compact Riemannian manifold. 

This paper is the continuation of the research held in \cite{W}. At the beginning, the author shows that the HLS for any foliation $\mathcal{F}$ on a compact Riemannian manifold $(M,g)$ is the Gromov-Hausdorff limit of a sequence of warped foliations with warping functions converging to zero on a dense subset $G\subset M$ (Section \ref{Convergence}, Theorem \ref{ConvergenceTheorem}). Next, he examines the Hausdorff leaf spaces for all natural constructions of the foliation listed in \cite{CC}. Namely, the HLS for tangential and transverse gluing, spinning, turbulization, and suspension are studied (Section \ref{BasicContructions}).

The main results of this paper are developed in Section \ref{MainResults} (Theorem \ref{MainTheorem1} and Theorem \ref{MainTheorem2}), where the complete description of the Hausdorff leaf space for a codim-1 foliation on a compact Riemannian manifold is presented. It is shown that the HLS for a codim-1 foliation is isometric to a finite connected metric graph, while for every finite connected metric graph $G$ there exists a foliated Riemannian manifold $(M,\mathcal{F},g)$ such that the Hausdorff leaf space for $\mathcal{F}$ is isometric to $G$. Finally (Theorem \ref{SufficientNecessaryCodim1}), the necessary and sufficient condition for the sequence $(f_n)$ of warping function on a compact Riemannian manifold carrying foliation of codim-1 to have a sequence of warped foliations $(M_{f_n})$ converging to the Hausdorff leaf space for the foliation $\mathcal{F}$ is shown.

For the theory of foliations we refer to \cite{CC} or \cite{HH}.

\section{Preliminaries}\label{Preliminaries}

\subsection{Hausdorff leaf spaces}\label{Preliminaries-HLS}

Let us recall the notion of Hausdorff leaf space \cite{W}:

Let $(M,\mathcal{F},g)$ be a compact foliated manifold. Let us set
\[
\rho(L,L') = \inf \{\sum_{i=1}^{n-1} {\rm dist} (L_i,L_{i+1})\},
\]
where the infimum is taken over all finite sequences of leaves beginning at $L_1=L$ and ending at $L_n=L'$ (Figure \ref{HLSFigure}). Let $\sim$ be an equivalence relation in the space of leaves $\mathcal{L}$ defined by:
\begin{equation*}
L\sim L' \Leftrightarrow \rho(L,L')=0,\quad L,L'\in\mathcal{L}.
\end{equation*}
Let $\tilde{\mathcal{L}} = \mathcal{L}/_{\sim}$. Put 
\[
\tilde\rho([L],[L']) = \rho(L,L'), 
\]
where $[L],[L']\in\tilde{\mathcal{L}}$. $(\tilde{\mathcal{L}},\tilde\rho)$ is a metric space. We call it {\em the Hausdorff leaf space} for the foliation $\mathcal{F}$ (briefly the HLS), and we denote it by $\hls{\mathcal{F}}$.

\begin{figure}
\includegraphics[scale=0.5]{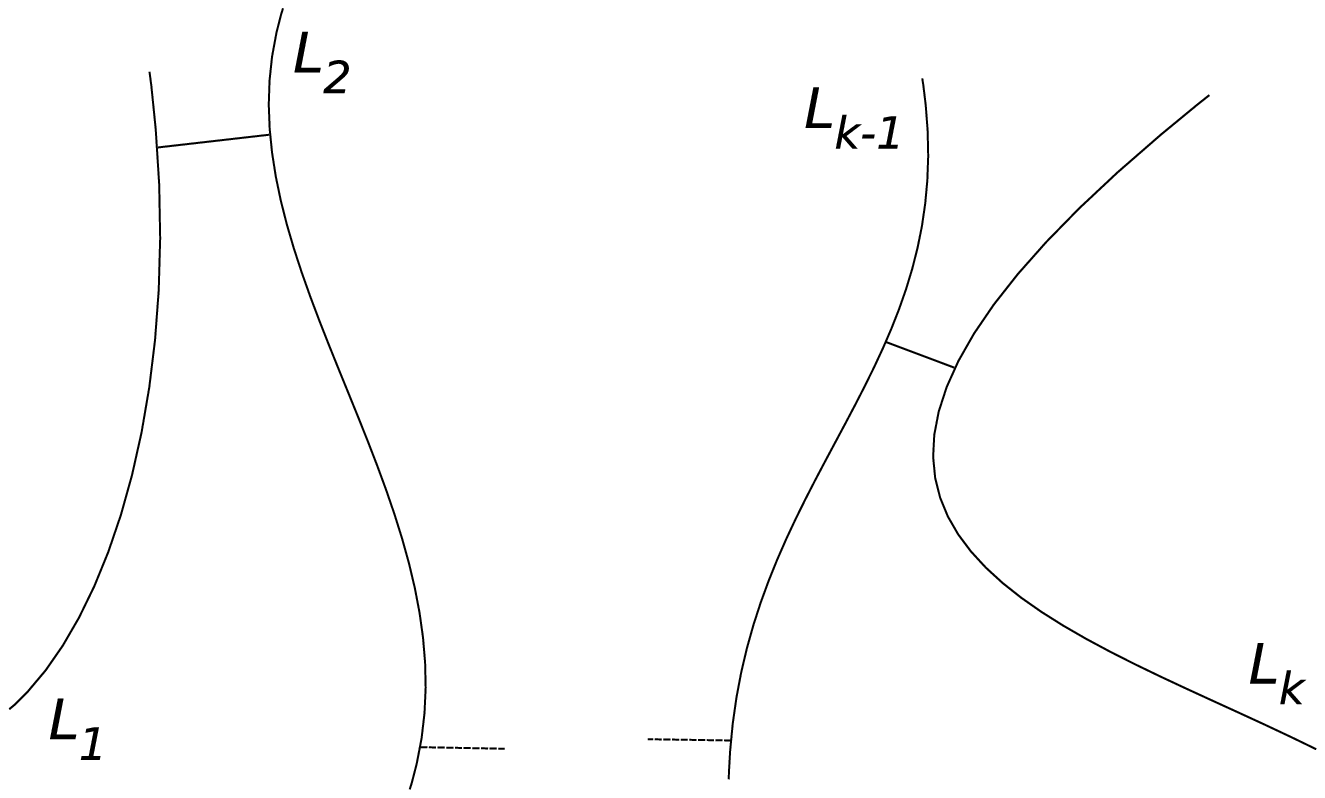} 
\caption{The idea of $\rho$.}
\label{HLSFigure}
\end{figure}
\begin{rema}
Equivalently, the Hausdorff leaf space can be defined as follows:

Following \cite{BBI}, one can define in a metric space $(X,d)$ equipped with an equivalence relation $R$ the {\it quotient pseudo-metric} $d_R$ as
\[
d_R(x,y) = \inf \{\sum_{i=1}^k d(p_i,q_i): p_1=x, q_k=y, k\in\mathbb{N}\}.
\]
where the infimum is taken over all sequences $\{pi\}_{1\leq i\leq N}$, $\{q_i\}_{1\leq i\leq N}$, $N\in\mathbb{N}$, such that 
\[
(p_{i+1},q_i)\in R.
\]
Consider a metric space $(X/R,d_R)$ and identify such points for which $d_R$ is equal to zero. Obtained metric space is called the {\it quotient metric space}. 

Let $(M,\mathcal{F},g)$ be a compact foliated Riemannian manifold, and let $R$ be the relation of belonging to the same leaf of $\mathcal{F}$. Using $R$ in $M$ we get the alternative definition.
\end{rema}

\begin{rema}
Let $\mathcal{F}$ be a codim-1 foliation on a compact Riemannian manifold $(M,g)$. One can define in the space of leaves a relation as follows: a leaf $L$ is related to a leaf $L'$ iff $L$ is contained in a closure ${\rm cl}L'$ of a leaf $L'$. This relation defines an equivalence relation $\equiv$ in the leaf space $\mathcal{L}$. One can check that equipping $\mathcal{L}/_{\equiv}$ with quotient metric one obtains (for a foliation of codimension one) Hausdorff leaf space for the foliation $\mathcal{F}$.
\end{rema}

\begin{lemm}
For every foliation $\mathcal{F}$ on a compact foliated Riemannian manifold the $\hls{\mathcal{F}}$ is a length space.
\end{lemm}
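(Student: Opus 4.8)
The plan is to show directly that $\tilde\rho$ equals the infimum of lengths of curves joining points of $\hls{\mathcal{F}}$, by pushing geodesics of $M$ down to the quotient. The starting observation is that the natural projection $\pi\colon M\to\tilde{\mathcal{L}}$, $\pi(x)=[L_x]$ (where $L_x$ is the leaf through $x$), is surjective and $1$-Lipschitz: for $x,y\in M$ the one-step chain $L_x,L_y$ gives $\tilde\rho(\pi(x),\pi(y))=\rho(L_x,L_y)\le\dist(L_x,L_y)\le d_g(x,y)$, where $d_g$ is the Riemannian distance. Since $1$-Lipschitz maps do not increase the length of curves (estimate each partition sum), $\pi$ carries any rectifiable curve of $M$ to a curve of $\hls{\mathcal{F}}$ of no greater length.

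First I would fix $[L],[L']\in\tilde{\mathcal{L}}$ and $\epsilon>0$, and choose by definition of $\rho$ a chain of leaves $L=L_1,L_2,\dots,L_n=L'$ with $\sum_{i=1}^{n-1}\dist(L_i,L_{i+1})<\rho(L,L')+\epsilon/2$. For each $i$ I would pick $a_i\in L_i$ and $b_i\in L_{i+1}$ with $d_g(a_i,b_i)\le\dist(L_i,L_{i+1})+\epsilon/2^{i+1}$; because $M$ is a compact, hence complete, Riemannian manifold, Hopf--Rinow supplies a minimizing geodesic $\gamma_i$ from $a_i$ to $b_i$ of length $d_g(a_i,b_i)$. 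The pushed curve $\pi\circ\gamma_i$ then runs from $[L_i]$ to $[L_{i+1}]$ and has length at most $d_g(a_i,b_i)$.

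Since $b_i$ and $a_{i+1}$ both lie on $L_{i+1}$, the endpoint of $\pi\circ\gamma_i$ coincides with the initial point of $\pi\circ\gamma_{i+1}$, namely $[L_{i+1}]$, so these segments concatenate into a single continuous curve $\Gamma$ in $\hls{\mathcal{F}}$ from $[L]$ to $[L']$, whose length satisfies
\[
\mathrm{Length}(\Gamma)\le\sum_{i=1}^{n-1}d_g(a_i,b_i)\le\sum_{i=1}^{n-1}\dist(L_i,L_{i+1})+\sum_{i=1}^{n-1}\frac{\epsilon}{2^{i+1}}<\rho(L,L')+\epsilon .
\]
As the length of any curve joining two points is always at least the distance between them, the infimum of lengths of curves from $[L]$ to $[L']$ equals $\tilde\rho([L],[L'])$; since $[L],[L']$ were arbitrary, $\hls{\mathcal{F}}$ is a length space.

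The step needing the most care is the concatenation: I must check that the projected geodesic segments genuinely meet at the shared leaf classes so that $\Gamma$ is an honest curve, and that the errors $\epsilon/2^{i+1}$ sum to less than $\epsilon/2$; beyond this bookkeeping there is no real analytic obstacle, as completeness of $M$ provides the geodesics and the $1$-Lipschitz property of $\pi$ controls their images. As an alternative, following the second Remark and \cite{BBI} one could instead invoke that the quotient pseudo-metric of a length space is again a length metric, applied to the length space $(M,d_g)$ modulo the leaf relation; the argument above makes the conclusion self-contained, independent of the identification of the two descriptions of $\hls{\mathcal{F}}$.
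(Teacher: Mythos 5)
Your proof is correct and follows the same two-inequality scheme as the paper's: the easy direction is that the length of any curve bounds the distance between its endpoints from below, and the substantive direction is that $\tilde\rho([L],[L'])$ is approximated within $\epsilon$ by lengths of curves in $\hls{\mathcal{F}}$. The paper dismisses this second direction as ``following directly from the definition of the HLS''; your construction --- choosing an almost-optimal chain of leaves, joining consecutive leaves by near-minimizing geodesics, pushing these through the $1$-Lipschitz projection $\pi$, and concatenating at the shared leaf classes --- is exactly the argument that assertion requires, so you have simply supplied the details the paper omits.
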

\begin{proof}
By the definition of the length metric \cite{G}, for every two points $x,y\in\hls{\mathcal{F}}$ and any curve $c:[0,1]\to\hls{\mathcal{F}}$ such that $c(0)=x$, $c(1)=y$ we have $\tilde{\rho}(x,y)\leq l(c)$. The opposite inequality follows directly from the definition of the HLS. 
\end{proof}

\subsection{Gluing metric spaces}\label{Preliminaries-Gluing}

Following \cite{BBI}, we now describe how to glue length spaces:

 Let $(X_{\alpha},d_{\alpha})$ be a family of length spaces. Set the length metric $d$ on a disjoint union $\amalg_{\alpha} X_{\alpha}$ as follows:

If $x,y\in X_{\alpha}$, then $d(x,y)= d_{\alpha}(x,y)$; Otherwise, set $d(x,y)=\infty$. The metric $d$ is called the {\em length metric of disjoint union}. 

Now, let $(X,d_X)$ and $(Y,d_Y)$ be two length spaces, while $f:A\to B$ be a bijection between two subsets $A\subset X$ and $B\subset Y$. Equip $Z=X\amalg Y$ with the length metric of disjoint union. Introduce the equivalence relation $\sim$ as the smallest equivalence relation containing relation generated by the relation $x\sim y$ iff $f(x)=y$. The result of gluing $X$ and $Y$ along $f$ is the metric space $(Z/_{\sim},d_{\sim})$.

\subsection{Warped foliations}\label{Preliminaries-WarpedFoliations}

We recall here the notion of warped foliation \cite{W}. The Hausdorff leaf space for warped foliation will be the main topic of our interest in Section 2. Moreover, the results of Section 2 will be used as a tool in Sections 3 and 4.

Let $(M,\mathcal{F},g)$ be a foliated Riemannian manifold and $f:M\to (0,\infty)$ be a basic function on $M$, i.e. a function constant along the leaves of $\mathcal{F}$. We modify the Riemannian structure $g$ to $g_f$ in the following way: $g_f (v,w) = f^2 g(v,w)$ while both $v,w$ are tangent to the foliation $\mathcal{F}$, but if at least one of vectors $v,w$ is perpendicular to $\mathcal{F}$ then we set $g_f (v,w) = g(v,w)$. Foliated Riemannian manifold $(M,\mathcal{F},g_f)$ is called here \textit{the warped foliation} and denoted by $M_f$. The function $f$ is called \textit{the warping function}.

\subsection{Gromov-Hausdorff convergence}\label{Preliminaries-GHConvergence}

Recall the notion of Gromov-Hausdorff convergence \cite{G}. Let $(X, d_X)$ and $(Y, d_Y)$ be an arbitrary compact metric spaces. The distance of $X$ and $Y$ can be defined as
\[
d_{GH}(X,Y):={\rm inf}\{d_H(X,Y)\},
\]
where $d$ ranges over all admissible metric on disjoint union $X\amalg Y$, i.e. $d$ is an extension of $d_X$ and $d_Y$, while $d_H$ denotes the Hausdorff distance. The number $d_{GH}(X,Y)$ is called {\it the Gromov-Hausdorff distance} of metric spaces $X$ and $Y$.

\begin{theo}\label{GromovDistanceTheorem}
$d_{GH}(X,Y)=0$ iff $(X,d_X)$ is isometric to $(Y,d_Y)$.\hfill $\square$
\end{theo}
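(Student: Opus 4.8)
The plan is to handle the two implications separately, with essentially all the work in the forward direction. For the easy direction, suppose $(X,d_X)$ is isometric to $(Y,d_Y)$ via an isometry $\varphi$. I would define, for each $\varepsilon>0$, an admissible metric $d$ on $X\amalg Y$ by setting $d(x,\varphi(x))=\varepsilon$ and extending by the triangle inequality; one checks that the Hausdorff distance of $X$ and $Y$ in this metric is at most $\varepsilon$, so $d_{GH}(X,Y)\le\varepsilon$ for every $\varepsilon>0$, whence $d_{GH}(X,Y)=0$.

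For the forward direction, assume $d_{GH}(X,Y)=0$. First I would reformulate vanishing Gromov--Hausdorff distance in terms of approximate isometries: from $d_{GH}(X,Y)=0$ one extracts, for each $n$, a map $f_n\colon X\to Y$ whose distortion $\sup_{x,x'}|d_Y(f_n(x),f_n(x'))-d_X(x,x')|$ is at most $\varepsilon_n$ and whose image is $\varepsilon_n$-dense in $Y$, with $\varepsilon_n\to 0$. This translation is routine given a common ambient metric realizing small Hausdorff distance.

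The core step is to produce a genuine isometry as a limit of the $f_n$. I would fix a countable dense set $S=\{x_1,x_2,\dots\}\subset X$, which exists since $X$ is compact, hence separable. Because $Y$ is compact, a diagonal argument yields a subsequence along which $f_n(x_i)$ converges for every $i$; call the limit $f(x_i)$. Passing to the limit in the distortion estimate gives $d_Y(f(x_i),f(x_j))=d_X(x_i,x_j)$, so $f$ is distance-preserving on $S$ and therefore uniformly continuous, extending uniquely to a distance-preserving map $f\colon X\to Y$. Finally, surjectivity follows because $f(X)$ is compact, hence closed, and the $\varepsilon_n$-density of the images $f_n(X)$ forces $f(X)$ to be dense; a closed dense subset of $Y$ is all of $Y$. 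Thus $f$ is a surjective isometry, as required.

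The main obstacle is the compactness and diagonalization step: one must guarantee that the approximate isometries admit a convergent subsequence on a dense set and that the limit inherits exact distance preservation. Compactness of $Y$ is what makes the diagonal extraction possible, while compactness of $X$ supplies the dense set and upgrades distance preservation on $S$ to an isometry on all of $X$; without compactness the limiting map may fail either to exist or to be surjective.
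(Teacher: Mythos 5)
The paper does not prove this statement at all: it is quoted as a known result from the metric geometry literature (cf.\ \cite{G}, \cite{BBI}) and closed with an immediate $\square$. So there is no ``paper's approach'' to compare against; what you have written is the standard textbook proof, and it is essentially correct. The easy direction (isometric $\Rightarrow$ $d_{GH}=0$) via the admissible metric $d(x,\varphi(x'))=\inf_{x''}\bigl(d_X(x,x'')+\varepsilon+d_Y(\varphi(x''),\varphi(x'))\bigr)$ is fine, and the hard direction via $\varepsilon_n$-isometries, a countable dense set $S\subset X$, diagonal extraction using compactness of $Y$, and passage to the limit in the distortion bound is exactly the classical argument.

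One step deserves slightly more care than your sketch gives it: the surjectivity of the limit map $f$. You argue that $\varepsilon_n$-density of $f_n(X)$ forces $f(X)$ to be dense, but the point $x_i\in S$ you would pick to approximate a given $y\in Y$ depends on $n$, while convergence $f_n(x_i)\to f(x_i)$ is only pointwise. The routine fix is to work with a finite $\delta$-net $\{x_{i_1},\dots,x_{i_k}\}\subset S$ of $X$: for large $n$ the finite set $f_n(\{x_{i_1},\dots,x_{i_k}\})$ is a $(\delta+2\varepsilon_n)$-net in $Y$, and letting $n\to\infty$ over the subsequence shows $f(\{x_{i_1},\dots,x_{i_k}\})$ is a $3\delta$-net, whence $f(X)$ is dense and, being compact, equals $Y$. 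Alternatively, extract a distance-preserving map $g\colon Y\to X$ by symmetry and use the fact that a distance-preserving self-map of a compact metric space is surjective. With either patch the proof is complete.
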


The Gromov Lemma (below) will be used widely throughout his paper.

\begin{lemm}\label{GromovLemma}
Let $(X,d_X)$ and $(Y,d_Y)$ be arbitrary compact metric spaces, and let 
\begin{align*}
A=\{x_1,\dots ,x_k\}\subset X,\\
B=\{y_1,\dots ,y_k\}\subset Y
\end{align*}
be $\varepsilon$-nets satisfying  for all $1\leq i,j\leq k$ the condition 
\[
|d_X(x_i,x_j) - d_Y(y_i,y_j)|\leq \varepsilon.
\]
Then $d_{GH}(X,Y)\leq 3\varepsilon$.\hfill $\square$
\end{lemm}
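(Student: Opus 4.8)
The plan is to prove the bound directly from the paper's own definition of the Gromov--Hausdorff distance, by writing down an explicit admissible metric on the disjoint union $X\amalg Y$ that realizes a small Hausdorff distance. Concretely, I would keep $d=d_X$ on $X\times X$ and $d=d_Y$ on $Y\times Y$, and for a pair $x\in X$, $y\in Y$ set
\[
d(x,y)=\min_{1\le i\le k}\bigl(d_X(x,x_i)+d_Y(y_i,y)\bigr)+\varepsilon.
\]
The added constant $\varepsilon$ guarantees positivity of the cross distances, while the min-plus form builds the correspondence $x_i\leftrightarrow y_i$ directly into the definition. Symmetry is immediate, so the whole argument reduces to two verifications: that $d$ is genuinely a metric extending $d_X$ and $d_Y$ (admissibility), and that the Hausdorff distance $d_H(X,Y)$ computed with $d$ is small.

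The first verification is the heart of the matter and where the two hypotheses are consumed. I would check the triangle inequality case by case according to how many endpoints lie in $X$ versus $Y$; the purely intra-$X$ and intra-$Y$ triples are trivial, and the mixed triples $x\to y\to y'$ and $x\to x'\to y$ follow by choosing the same minimizing index $i$ on both sides and applying the triangle inequality of $d_Y$ (resp. $d_X$). The delicate cases are $x\to y\to x'$ and $y\to x\to y'$, where the path passes through the opposite space: here, after selecting the minimizing indices $i,j$, I would bound $d_Y(y_i,y_j)$ below by $d_X(x_i,x_j)-\varepsilon$ (resp. bound $d_X(x_i,x_j)$ below by $d_Y(y_i,y_j)-\varepsilon$) using the net compatibility condition $|d_X(x_i,x_j)-d_Y(y_i,y_j)|\le\varepsilon$, and then close the estimate with the triangle inequality in $d_X$ (resp. $d_Y$). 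A short computation then shows $d(x,y)+d(y,x')\ge d_X(x,x')$, so that $d$ does not shortcut the original metrics and indeed restricts to $d_X$ and $d_Y$. I expect this step to be the main obstacle, since it is the only place where one must simultaneously control a cross term and re-expand it against the net condition; the bookkeeping of indices is routine but must be done carefully to avoid an illegitimate shortcut.

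For the second verification I would use that $A$ and $B$ are $\varepsilon$-nets. Given $x\in X$, choose $x_i$ with $d_X(x,x_i)\le\varepsilon$; then taking $j=i$ in the defining minimum yields $d(x,y_i)\le d_X(x,x_i)+\varepsilon\le 2\varepsilon$, so every point of $X$ lies within $2\varepsilon$ of $Y$, and symmetrically every point of $Y$ lies within $2\varepsilon$ of $X$. Hence $d_H(X,Y)\le 2\varepsilon$ with respect to this admissible $d$, and therefore $d_{GH}(X,Y)\le 2\varepsilon\le 3\varepsilon$, which establishes the stated bound (with room to spare). I would remark that the slack factor $3$ in the statement is comfortable enough that one need not optimize the constants, so the construction above, once shown to be a metric, immediately delivers the conclusion.
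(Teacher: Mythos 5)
Your proposal is correct, and in fact the paper offers no proof to compare it against: Lemma \ref{GromovLemma} is stated with a terminal $\square$ as a quoted result (the ``Gromov Lemma,'' available in \cite{G} or \cite{BBI}), so you are supplying an argument the paper omits. Your construction is the standard one: the cross-distance $d(x,y)=\min_i\bigl(d_X(x,x_i)+d_Y(y_i,y)\bigr)+\varepsilon$ is symmetric and bounded below by $\varepsilon$, and the two delicate triangle-inequality cases do close as you claim --- e.g.\ for $x\to y\to x'$ one gets
\[
d(x,y)+d(y,x')\ \ge\ d_X(x,x_i)+d_Y(y_i,y_j)+d_X(x_j,x')+2\varepsilon\ \ge\ d_X(x,x')-\varepsilon+2\varepsilon\ \ge\ d_X(x,x'),
\]
using first the triangle inequality in $Y$, then the distortion hypothesis, then the triangle inequality in $X$; the case $y\to x\to y'$ is symmetric, and the one-sided cases need only the triangle inequality in a single space. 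The net hypothesis then gives $d(x,y_i)\le d_X(x,x_i)+\varepsilon\le 2\varepsilon$ for a suitable $i$, so $d_H(X,Y)\le 2\varepsilon$ and hence $d_{GH}(X,Y)\le 2\varepsilon\le 3\varepsilon$ --- a slightly sharper constant than the lemma asserts. One small expository quibble: the distortion hypothesis is consumed in the metric verification, but the $\varepsilon$-net hypothesis is consumed only in the Hausdorff-distance estimate, not (as you say) in the first verification; this does not affect the validity of the argument.
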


\section{Convergence theorem}\label{Convergence}

Consider a sequence $(f_n)_{n\in\mathbb{N}}$, $f_n:M\to (0,\infty)$, of warping function on a compact foliated Riemannian manifold $(M,\mathcal{F},g)$. One can ask, how does the limit in Gromov-Hausdorff topology of a sequence of warped foliations $(M_{f_n})_{n\in\mathbb{N}}$ look like. Let $G\subset M$ be a dense subset. 

\begin{theo}\label{ConvergenceTheorem}
For an arbitrary compact foliated manifold $(M,\mathcal{F},g)$ and any sequence $(f_n)_{n\in\mathbb{N}}$, $f_n: M\to [0,1)$, of warping functions on $M$ converging to zero on $G$, the Gromov-Hausdorff limit of a sequence of warped foliations is isometric to $\hls{\mathcal{F}}$.
\end{theo}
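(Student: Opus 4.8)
The plan is to prove Gromov--Hausdorff convergence directly, by comparing the warped distance functions $d_{f_n}$ on $M$ with the pullback $\tilde\rho\circ(\pi\times\pi)$ of the HLS metric under the quotient projection $\pi\colon M\to\hls{\mathcal{F}}$, and then invoking Gromov's Lemma (Lemma~\ref{GromovLemma}). First I would record two preliminary facts. Since $(M,g)$ is compact and $\tilde\rho(\pi x,\pi y)=\rho(L_x,L_y)\le\dist(L_x,L_y)\le d(x,y)$, the projection $\pi$ is continuous and distance-nonincreasing, so $\hls{\mathcal{F}}$ is a compact metric space and admits, for each $\varepsilon>0$, a finite $\varepsilon$-net $\{[L_1],\dots,[L_k]\}$; I fix representatives $x_i\in L_i$. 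The second fact is the decomposition of the warped length: writing $\dot c=\dot c^{\top}+\dot c^{\perp}$ into parts tangent and orthogonal to $\mathcal{F}$, the definition of $g_f$ gives $L_f(c)=\int_0^1\sqrt{f^2|\dot c^{\top}|^2+|\dot c^{\perp}|^2}\,dt$, which will be the basic object of all estimates.

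\emph{Lower bound.} Because $f_n\ge 0$ we have $L_{f_n}(c)\ge\int_0^1|\dot c^{\perp}|\,dt$ for every curve and every $n$, so it suffices to show that the transverse length of any path from $L$ to $L'$ dominates $\rho(L,L')$. Partitioning the path finely and reading the transverse displacement of each sub-arc in a foliation chart, each sub-arc has transverse length at least $\dist$ of the two leaves through its endpoints, up to a term that vanishes with the mesh; summing over the partition produces an admissible chain for $\rho$, so in the limit the transverse length is $\ge\rho(L,L')$. Consequently $d_{f_n}(x,y)\ge\tilde\rho(\pi x,\pi y)$ for all $n$, uniformly.

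\emph{Upper bound.} This is the part I expect to be the crux. Given $x_i,x_j$, I fix a chain of leaves $L_i=K_0,\dots,K_m=L_j$ with $\sum_l\dist(K_l,K_{l+1})<\tilde\rho([L_i],[L_j])+\varepsilon$, and refine it so that each transverse jump is realized by a short geodesic inside a single flow box, with the connecting tangential segments lying in a single plaque (hence of length bounded by the chart size, uniformly in $l$ by compactness of $M$). The transverse hops contribute approximately $\sum_l\dist(K_l,K_{l+1})<\tilde\rho+\varepsilon$ to $L_{f_n}$, while the tangential segments contribute $\sum_l f_n|_{K_{l+1}}\cdot(\text{plaque length})$; here I use that $f_n$ is basic, so it is a single constant on each traversed leaf, and that $G$ is dense, so after perturbing the chain slightly I may assume every $K_l$ meets $G$, whence $f_n|_{K_l}\to 0$ and each tangential term tends to $0$. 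The genuine obstacle is to make all of this \emph{uniform}: I must bound the tangential travel independently of the pair and cover $M$ by finitely many flow boxes so that the routing of the chain through $G$, and the resulting convergence $f_n|_{K_l}\to 0$, hold simultaneously for the finitely many net points. This is exactly where the density of $G$ together with constancy of $f_n$ along leaves is essential, and I would isolate it as the main lemma.

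\emph{Conclusion.} Combining the two bounds, for each $\varepsilon>0$ and all sufficiently large $n$ the sets $\{x_i\}\subset M_{f_n}$ and $\{[L_i]\}\subset\hls{\mathcal{F}}$ are $\varepsilon$-nets whose mutual distances satisfy $|d_{f_n}(x_i,x_j)-\tilde\rho([L_i],[L_j])|\le\varepsilon$ (after absorbing fixed constants into $\varepsilon$); Gromov's Lemma (Lemma~\ref{GromovLemma}) then gives $d_{GH}(M_{f_n},\hls{\mathcal{F}})\le 3\varepsilon$. Letting $\varepsilon\to 0$ through a diagonal choice of $n$ yields $d_{GH}(M_{f_n},\hls{\mathcal{F}})\to 0$, and Theorem~\ref{GromovDistanceTheorem} identifies the limit, up to isometry, with $\hls{\mathcal{F}}$.
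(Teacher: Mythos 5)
Your proposal is correct and follows essentially the same route as the paper: a uniform lower bound $d_{f_n}(x,y)\geq\tilde\rho(\pi(x),\pi(y))$, an asymptotic upper bound along near-optimal chains of leaves routed through the dense set $G$ (where constancy of $f_n$ along leaves and pointwise convergence on the finitely many leaves involved make the tangential contributions vanish), and Lemma~\ref{GromovLemma} applied to an $\varepsilon$-net chosen in $G$. The ``main lemma'' you flag for the lower bound --- that transverse length alone controls $\rho$, i.e.\ that warping does not change the HLS metric --- is exactly what the paper isolates as Lemma~\ref{ConvergenceTheoremLemma1}, proved via the curvature-controlled estimate on the tangential component of short transversal curves.
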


Before we prove Theorem \ref{ConvergenceTheorem}, we give a simple definition and observe an important fact.

We say that two metric structures $g$ and $g'$ on a compact foliated Riemannian manifold $(M,\mathcal{F})$ {\it coincide on the orthogonal bundle} $\mathcal{F}^{\perp}$ if every vector $v$ perpendicular to $\mathcal{F}$ in $g$ is perpendicular in $g'$ and vice versa, and $g(v,w) = g'(v,w)$ for any vectors $v,w$ perpendicular to $\mathcal{F}$ either in $g$ or $g'$.

\begin{lemm}\label{ConvergenceTheoremLemma1}
Let $g$ and $g'$ be any Riemannian structures on $M$ which coincide on the orthogonal bundle $\mathcal{F}^{\perp}$. Then $\tilde\rho = \tilde{\rho'}$.
\end{lemm}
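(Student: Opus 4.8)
The plan is to reduce the claim to the statement that the pseudo-metric $\rho$ on the leaf space $\mathcal{L}$ depends only on the restriction of the metric to $\mathcal{F}^{\perp}$. Indeed, if we can show that $\rho = \rho'$ as pseudo-metrics on $\mathcal{L}$ (where $\rho,\rho'$ are built from $g,g'$ as in the definition of $\hls{\mathcal{F}}$), then the two equivalence relations coincide, the underlying sets $\tilde{\mathcal{L}}$ agree, and $\tilde\rho = \tilde{\rho'}$ follows at once. First I would record the elementary consequence of the hypothesis: since every vector perpendicular to $\mathcal{F}$ in $g$ is perpendicular in $g'$ and conversely, the two metrics induce the \emph{same} splitting $TM = T\mathcal{F}\oplus\mathcal{F}^{\perp}$, hence the same orthogonal projection $\pi_{\perp}\colon TM\to\mathcal{F}^{\perp}$; moreover $g(v,w)=g'(v,w)$ for $v,w\in\mathcal{F}^{\perp}$. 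Consequently, for any piecewise smooth curve $c$ with $c' = \pi_{\top}c' + \pi_{\perp}c'$, orthogonality in $g$ gives $\mathrm{length}_g(c)\ge \int|\pi_{\perp}c'|_g = \int|\pi_{\perp}c'|_{g'}$, a lower bound manifestly symmetric in $g$ and $g'$.

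The core of the argument is to show that the infimum defining $\rho$ is unchanged if one restricts to \emph{horizontal} jumps, i.e.\ curves whose velocity lies in $\mathcal{F}^{\perp}$. Here I would use the reformulation of $\rho$ recorded in the Remark: $\rho(L,L')=\inf\sum_i\mathrm{length}_g(c_i)$, where the $c_i$ are paths, $c_1$ starts on $L$, the last ends on $L'$, and the endpoint of $c_i$ lies on the same leaf as the start of $c_{i+1}$ (so one may slide along a leaf between consecutive jumps at no cost). Let $\rho^{\perp}$ denote the same infimum taken only over configurations in which every $c_i$ is horizontal. Since horizontal paths form a subclass, $\rho^{\perp}\ge\rho$. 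For the reverse inequality I would take a near-minimizing path for a given jump, subdivide it into very short arcs, and replace each short arc from $p$ to $q$ by a free tangential slide (changing the point but not its leaf) followed by a horizontal arc realizing the transverse displacement; as the tangential component contributes nothing to the transverse displacement, the total horizontal length is bounded by the original length up to an error tending to zero with the mesh. This yields $\rho^{\perp}\le\rho$, hence $\rho=\rho^{\perp}$. The crucial point is that a horizontal arc has the same $g$- and $g'$-length (its velocity lies in $\mathcal{F}^{\perp}$, where the metrics agree) and the class of horizontal configurations is literally the same for both metrics; therefore $\rho^{\perp}={\rho'}^{\perp}$, and combining gives $\rho=\rho^{\perp}={\rho'}^{\perp}=\rho'$.

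I expect the main obstacle to be the replacement step, i.e.\ making rigorous that a short arc can be exchanged for a tangential slide plus a horizontal arc with only a higher-order loss in length. This is a local statement that I would carry out in a foliated chart, where $\pi_{\perp}|_{\mathcal{F}^{\perp}}$ onto a local transversal is a fiberwise isomorphism, so that the horizontal arc with prescribed transverse endpoints exists by integrating the horizontal lift and has length controlled by the orthogonal displacement of the original arc. Compactness of $M$ is what lets me choose a finite atlas and uniform constants, so that the accumulated error over all subdivisions of a fixed chain is uniformly small; letting the mesh tend to zero then closes the estimate. Once $\rho=\rho'$ is established, the equality $\tilde\rho=\tilde{\rho'}$ of the induced Hausdorff leaf metrics is immediate.
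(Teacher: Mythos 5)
Your argument is correct, and it reaches the conclusion by a route that is related to but technically distinct from the paper's. The paper fixes a constant $C$ with $g\le C\cdot g'$ and proves the one-sided Lipschitz bound $\dist(L,L')\le(1+C\epsilon)\dist'(L,L')$ for nearby leaves, by arguing that any sufficiently short $g'$-near-minimizing curve $\gamma$ with $\dot\gamma(0)\perp\mathcal F$ and bounded geodesic curvature has tangential component $|\dot\gamma^{\top}|<\epsilon$ throughout, so that $l(\gamma)\le\int|\dot\gamma^{\top}|+\int|\dot\gamma^{\perp}|\le(1+C\epsilon)l'(\gamma)$; it then refines chains of leaves so that each jump is shorter than $\delta$, sums, lets $\epsilon\to0$, and invokes symmetry for the reverse inequality. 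You instead introduce the intermediate quantity $\rho^{\perp}$ (the infimum over chains realized by horizontal arcs) and prove $\rho=\rho^{\perp}$ once, after which $\rho^{\perp}={\rho'}^{\perp}$ is tautological because horizontal arcs have identical length in $g$ and $g'$ and the class of horizontal configurations is metric-independent. Your decomposition buys two things: the symmetry between $g$ and $g'$ is manifest (no ``similarly for the other inequality''), and you need no hypothesis on the geodesic curvature of competitors, since the pointwise bound $\mathrm{length}_g(c)\ge\int|\pi_{\perp}c'|_g$ holds for arbitrary piecewise smooth curves --- whereas the paper must implicitly restrict to competitors of bounded curvature and justify that the infimum is unchanged. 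What the paper's route buys is that it never has to construct horizontal lifts; your replacement step (anchoring each short horizontal arc at the actual point of the subdivided curve, so that the free tangential slides absorb the drift and no holonomy distortion accumulates) is the one place where care is genuinely needed, and your sketch of it in a foliated chart with uniform constants from compactness is the right way to close it. Both proofs share the same underlying refinement of chains into small jumps, so the overall architecture is parallel even though the key local lemma differs.
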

\begin{proof}
Since $M$ is compact, we can assume that $g\leq C\cdot g'$ for a certain constant $C\geq 1$. Let $\rho$ and $\rho'$ be pseudometrics given by
\begin{align*}
\rho(L,L') = \inf \{\sum_{i=1}^{n-1} {\rm dist} (L_i,L_{i+1})\}, \\
\rho'(L,L') = \inf \{\sum_{i=1}^{n-1} {\rm dist}' (L_i,L_{i+1})\},
\end{align*}
where $\dist$ and $\dist'$ denote the distance of the leaves in $g$ and $g'$, respectively. Since the geometry of $M$ is bounded, then for every $A>0$ and $\epsilon>0$ there exists $\delta>0$ such that for every smooth curve $\gamma:[0,l(\gamma)]\to M$ parametrized naturally satisfying
\begin{enumerate}
\item $\dot\gamma(0)$ is perpendicular to $\mathcal{F}$,
\item the $g'$-length $l'(\gamma)$ is smaller than $\delta$,
\item the $g'$-geodesic curvature $|k_g(\gamma)|$ is smaller than $A$,
\end{enumerate}
the $g$-length of the component tangent to $\mathcal{F}$ satisfies 
\begin{equation*}
|\dot\gamma^{\top}|<\epsilon.
\end{equation*}
Let $\epsilon>0$, $L,L'\in\mathcal{F}$ be such that $d={\rm dist}'(L,L')<\delta$. Let $\gamma:[0,l'(\gamma)]\to M$ be a curve such that its length in $g'$ satisfies $d\leq l'(\gamma)\leq \delta$. We have
\begin{align*}
{\rm dist}(L,L') \leq l(\gamma) = \int_{[0,l'(\gamma)]} |\dot\gamma| \leq \int_{[0,l'(\gamma)]} |\dot\gamma^{\top}| + \int_{[0,l'(\gamma)]} |\dot\gamma^{\bot}| \\
\leq C\cdot l'(\gamma)\cdot \epsilon + \int_{[0,l'(\gamma)]} |\dot\gamma^{\bot}|' \leq (1+C\epsilon)\cdot l'(\gamma).
\end{align*}
Since $\gamma$ was chosen arbitrarily, we conclude that
\begin{equation*}
{\rm dist}(L,L') \leq (1+C\epsilon)\cdot {\rm dist}'(L,L').
\end{equation*}
Now, for every sequence of leaves $L_1,\dots,L_n$ such that $L_1=L$, $L_n=L'$ and satisfying 
\begin{equation*}
\sum_{i=1}^{n-1} {\rm dist}' (L_i,L_{i+1}) \leq \rho'(L,L') + \epsilon,
\end{equation*}
and such that ${\rm dist}' (L_i,L_{i+1})<\delta$ for all $i\in\{1,\dots,n-1\}$, we obtain
\begin{align*}
\rho(L,L')\leq \sum_{i=1}^{n-1} {\rm dist} (L_i,L_{i+1})\leq (1+C\epsilon)\cdot(\sum_{i=1}^{n-1} {\rm dist}' (L_i,L_{i+1})) \\
\leq (1+C\epsilon)\cdot (\rho'(L,L')+\epsilon).
\end{align*}
Tending with $\epsilon$ to zero we get that $\rho\leq \rho'$. Consequently $\tilde\rho\leq \tilde\rho'$. Similarly, we can show that $\tilde\rho'\leq \tilde\rho$.
\end{proof}

We now turn to a proof of Theorem \ref{ConvergenceTheorem}. Denote by $\pi:M\to \hls{\mathcal{F}}$ a natural projection given by $\pi(x)=[L_x]_{\sim}$, where $\sim$ is the equivalence relation defined in Section 1.1.

\begin{proof} 
Let $\epsilon>0$ and let $\{x_1,\dots,x_k\}$ be an $\epsilon$-net on $M$ contained in $G$. Let $i,j\in\{1,\dots,k\}$. Choose a family of leaves $F^{ij} = \{F_0^{ij},\dots,F_{d_i^j}^{ij}\}$ such that  $L_{x_i}=F_0^{ij}$, $L_{x_j}=F_{d_i^j}^{ij}$, $F_{\nu}^{ij}\subset \pi^{-1}(\pi(G))$ for any $0\leq \nu \leq d_i^j$. Next, consider a family of curves 
\[
\gamma_0^{ij},\dots,\gamma_{d_i^j-1}^{ij}:[0,1]\to M
\]
satisfying $\gamma_{\nu}^{ij}(0)\in L_{x_{\nu}}$, $\gamma_{\nu}^{ij}(1)\in L_{x_{\nu+1}}$, and \begin{equation}\label{ConvergenceTheoremEquation1}
\sum_{\nu=0}^{d_i^j-1} l(\gamma_{\nu}^{ij}) \leq \tilde\rho(\pi(L_{x_i}),\pi(L_{x_j}))+\epsilon.
\end{equation}
Let $d=\max\{d_i^j\}$. Since $f_n\to 0$ on $G$, and the number of leaves involved in $F^{ij}$, $i,j=1,\dots,k$, is finite, there exists $N\in\mathbb{N}$ such that for any $n>N$, $i,j\in\{1,\dots,k\}$ and $\nu\in \{0,\dots, d_i^j-1\}$ we have
\begin{align}\label{ConvergenceTheoremEquation2}
d_{F_{\nu}^{ij}}(\gamma_{\nu}^{ij}(1),\gamma_{\nu+1}^{ij}(0)) \leq \frac{\epsilon}{d},\\
\nonumber
d_{F_{\nu}^{ij}} (x_i,\gamma_{0}^{ij}(0))\leq \frac{\epsilon}{d}, \textrm{ and } d_{F_{\nu}^{ij}} (\gamma_{d_i^j-1}^{ij}(1),x_j)\leq \frac{\epsilon}{d}.
\end{align}

Let us pick one point in each $\{x_1,\dots,x_k\}\cap\pi^{-1}(\pi(x_i))$, $i=1,\dots,k$. We obtain a set $\{y_1,\dots,y_m\}$ ($m\leq k$) with the property $\pi(y_i)\neq \pi(y_j)$ iff $i\neq j$.

Let $n>N$. Direct calculation shows that the points $y_1,\dots,y_m$ form a $3\epsilon$-net on $(M,g_{f_n})$. Moreover, by (\ref{ConvergenceTheoremEquation1}) and (\ref{ConvergenceTheoremEquation2}),
\begin{equation*}
d_n(y_i,y_j)\leq \tilde\rho(\pi(L_{y_i}),\pi(L_{y_j})) + 2\epsilon.
\end{equation*}
Next, by Lemma \ref{ConvergenceTheoremLemma1},
\begin{equation*} 
\tilde\rho([L_{y_i}],[L_{y_j}]) = \tilde\rho_n(\pi(L_{y_i}),\pi(L_{y_j})) \leq d_n(y_i,y_j).
\end{equation*}
The set $\pi(\{y_1,\dots,y_m\}) = \pi(\{x_1,\dots,x_k\})$ provides an $\epsilon$-net on $\hls{\mathcal{F}}$. By Lemma \ref{GromovLemma}, $d_{GH}(M_{f_n},\hls{\mathcal{F}})\leq 9\epsilon$. Tending with $\epsilon$ to zero we get that 
\[
d_{GH}(M_{f_n},\hls{\mathcal{F}})=0.
\]
 Theorem \ref{GromovDistanceTheorem} completes the proof.
\end{proof}

\section{Basic constructions}\label{BasicContructions}

Studying foliations one can learn that there are several basic constructions for building foliations \cite{CC}. In this chapter we examine the HLS for the following constructions: tangential and transverse gluing, two transverse modifications - turbulization and spinning along a transverse boundary component, and for suspension.

We only provide here a detailed proof for tangential gluing and turbulization, which are used in Section \ref{MainResults}. For transverse gluing and for spinning we give the characterization of their HLS's and we only provide an outline of a proof. Details, analogically as for tangential gluing and turbulization, are left to the reader.

\subsection{Tangential gluing}

Let us assume that $(M_i,\mathcal{F}_i,g_i)$, $i=1,2$, are compact foliated Riemannian manifolds with boundary, while $\mathcal{F}_i$ is a foliation tangent to the boundary. Let $S_i\subset \partial M_i$ ($i=1,2$) be a union of boundary components, and let $h:S_1\to S_2$ be an isometry mapping leaves onto leaves. According to \cite{CC}, identify $S_1$ with $S_2$ using $x\equiv h(x)$, and form the quotient foliated manifold $M=M_1\cup_h M_2$ with foliation $\mathcal{F}=\mathcal{F}_1\cup_h \mathcal{F}_2$ defined by the leaves of $\mathcal{F}_i$ (Figure \ref{TangentFigure1}). 

Let us assume that one can obtain a smooth Riemannian structure $g$ on $M$ with the property $g|M_i=g_i$ ($i=1,2$).

\begin{figure}[h]
\centering
 \includegraphics[scale=1]{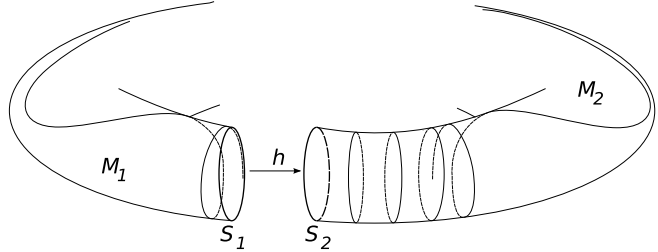}
 \caption{Tangential gluing.}
 \label{TangentFigure1}
\end{figure}

Denote by $\pi:M\to \hls{\mathcal{F}}$, $\pi_i:M_i\to\hls{\mathcal{F}_i}$ ($i=1,2$) natural projections. Consider the smallest equivalence relation $\sim$ in disjoint union 
\[
\hls{\mathcal{F}_1}\amalg \hls{\mathcal{F}_2}
\]
containing the relation defined as follows:
\[
\pi(L)\sim \pi(L') \Leftrightarrow \exists_{x\in\pi_1^{-1}(\pi(L))}\quad \pi_2(h(x))=\pi_2(L').
\]
Let $X=\hls{\mathcal{F}_1}\amalg \hls{\mathcal{F}_2}/_{\sim}$ endowed with quotient metric $d_X$.

Denote by $\Phi:\hls{\mathcal{F}}\amalg \hls{\mathcal{F}}\to X$, $\tilde \pi:M_1\amalg M_2\to \hls{\mathcal{F}}\amalg \hls{\mathcal{F}}$, and $p:M_1\amalg M_2\to M$ natural projections (see Figure \ref{TangentialFigure}). 

\begin{theo}\label{TangentialGluing}
The space $\hls{\mathcal{F}}$ is isometric to $(X,d_X)$.
\end{theo}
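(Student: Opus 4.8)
The plan is to exhibit an explicit bijective isometry $\bar\Psi:(X,d_X)\to\hls{\mathcal{F}}$ and to verify the two metric inequalities separately, rather than appealing to the Gromov Lemma. The map is forced by the diagram: I would set $\bar\Psi(\Phi(\tilde\pi(x)))=\pi(p(x))$ for $x\in M_1\amalg M_2$, so that $\pi\circ p=\bar\Psi\circ\Phi\circ\tilde\pi$. The first task is well-definedness. Since $g|M_i=g_i$, each $M_i$ sits isometrically in $M$, so every chain of leaves inside $M_i$ is also a chain in $M$ and $\dist(L,L')\le\dist_i(L,L')$ for leaves of $\mathcal{F}_i$; consequently $\rho\le\rho_i$ on the leaves coming from $M_i$, and a $\rho_i$-null pair is $\rho$-null. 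This makes $\pi\circ p$ descend through each $\pi_i$. To see that it also descends through $\sim$, I would check it on the generating pairs: if $\pi_1(L)\sim\pi_2(L')$ via $x\in S_1$ with $L_x\sim_1 L$ and $L_{h(x)}\sim_2 L'$, then $p$ identifies $x\in S_1$ with $h(x)\in S_2$, whence $\pi(p(x))=\pi(p(h(x)))$; thus $\bar\Psi$ is constant on $\sim$-classes and is well defined on $X$.

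Surjectivity is immediate because $p$ is onto, so every leaf of $\mathcal{F}$, hence every point of $\hls{\mathcal{F}}$, is $\pi(p(x))=\bar\Psi(\Phi(\tilde\pi(x)))$ for some $x$. For the easy metric inequality I would argue that $\bar\Psi$ is $1$-Lipschitz. Recall that $d_X$ is computed along chains that move inside a single component $\hls{\mathcal{F}_i}$ (at cost $\tilde\rho_i$) and jump between components only across $\sim$-related, hence $\bar\Psi$-equal, points. Using $\rho\le\rho_i$ inside each component and the triangle inequality in $\hls{\mathcal{F}}$, the $\bar\Psi$-image of any such chain has total length at most that of the chain, so
\[
\tilde\rho(\bar\Psi\xi,\bar\Psi\eta)\le d_X(\xi,\eta).
\]

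The hard part will be the reverse inequality $d_X(\xi,\eta)\le\tilde\rho(\bar\Psi\xi,\bar\Psi\eta)$, where I must lift a near-optimal object in $\hls{\mathcal{F}}$ to a chain in $X$ of no greater length. Fix $\epsilon>0$ and a chain of leaves $K_0,\dots,K_s$ of $\mathcal{F}$ realizing $\rho(L,L')$ up to $\epsilon$, together with near-minimizing transversal connecting curves $\gamma_j$ joining $K_j$ to $K_{j+1}$. The crucial step is to refine each $\gamma_j$ at its intersections with the gluing hypersurface $S=S_1=S_2$: because $\mathcal{F}$ is tangent to the boundary, $S$ is a union of leaves, so the crossing points lie on shared boundary leaves and the sub-arcs between successive crossings stay in a single $M_i$. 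On such a sub-arc $g$ agrees with $g_i$, so it is an admissible curve in $M_i$ and bounds $\dist_i$ between the two leaves it joins by its own length, giving a move in $\hls{\mathcal{F}_i}$ of at most that length. At each crossing the boundary leaf is common to both pieces, so switching from the $M_1$- to the $M_2$-description is exactly a $\sim$-jump and costs nothing in $d_X$. Concatenating these moves and jumps produces a chain in $X$ from $\xi$ to $\eta$ whose length is at most $\sum_j l(\gamma_j)\le\rho(L,L')+\epsilon$; letting $\epsilon\to0$ yields the desired inequality. The two inequalities show that $\bar\Psi$ is distance-preserving, and since $X$ is a genuine metric space, distance-preservation forces injectivity; together with surjectivity this makes $\bar\Psi$ a bijective isometry, so $\hls{\mathcal{F}}$ is isometric to $(X,d_X)$. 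The single delicate point to watch is that the refinement at $S$ and the metric freeness of the crossings really use both hypotheses on the gluing, namely that the leaves are tangent to $\partial M_i$ and that $h$ is a leaf-preserving isometry; without them the sub-arcs need not be leaf-to-leaf curves in the $M_i$ and the $\sim$-jumps need not be free.
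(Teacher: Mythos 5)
Your proof is correct, and its core content coincides with the paper's Lemma \ref{TangentialGluingLemma}: the paper too establishes the two inequalities $d_X(\Phi(\tilde\pi(x)),\Phi(\tilde\pi(y)))\le\tilde\rho(\pi(p(x)),\pi(p(y)))+\epsilon$ (by projecting a near-optimal chain of points of $M_1\amalg M_2$ down to $X$) and $\tilde\rho\le d_X+2\epsilon$ (by lifting a near-optimal chain in $\hls{\mathcal{F}_1}\amalg\hls{\mathcal{F}_2}$ to leaves and using that $h$ maps leaves onto leaves). Where you diverge is in how the theorem is extracted from these inequalities: you observe, correctly, that since $\Phi\circ\tilde\pi$ and $\pi\circ p$ are surjective, the identity $d_X=\tilde\rho$ on corresponding points already yields a well-defined bijective isometry $\bar\Psi$, so the theorem follows at once. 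The paper instead takes a detour: it invokes Theorem \ref{ConvergenceTheorem} with constant warping functions $f_n\to 0$, builds matching $\epsilon$-nets in $X$ and $\hls{\mathcal{F}}$, and applies the Gromov Lemma and Theorem \ref{GromovDistanceTheorem} to get $d_{GH}(X,\hls{\mathcal{F}})=0$. Your route is shorter and, arguably, more transparent (it produces the isometry explicitly rather than only its existence via $d_{GH}=0$); the paper's route has the advantage of reusing the net-comparison template that recurs in Theorems \ref{TransversalGluing}, \ref{Spinning} and \ref{Codim-1-lm1}. Two small points to tighten: (i) in the hard direction you speak of ``successive crossings'' of a connecting curve with $S$, which tacitly assumes finitely many crossings; a curve can meet $S$ in a complicated closed set, so one should either perturb $\gamma_j$ to meet $S$ transversally in finitely many points at the cost of an extra $\epsilon$, or decompose $[0,1]\setminus\gamma_j^{-1}(S)$ into its (countably many) component arcs and sum — the paper's lemma silently elides the same step when it asserts the existence of the points $r_1,q_1,\dots,r_k,q_k$; (ii) well-definedness of $\bar\Psi$ must hold not only on $\sim$-classes but on the further identifications made when passing from the quotient pseudometric to the quotient metric on $X$; this is covered by your $1$-Lipschitz estimate (points at $d_X$-distance zero have $\tilde\rho$-distance zero, hence equal images), but it is worth saying so explicitly since you use well-definedness before proving the Lipschitz bound.
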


\begin{figure}[h]
$$\xymatrix{
M_1\amalg M_2 \ar[r]^{p} \ar[d]^{\tilde \pi}& M \ar[r]^{\pi} & \hls{\mathcal{F}}\\
\hls{\mathcal{F}_1} \amalg \hls{\mathcal{F}_2} \ar[r]_{\hskip50pt \Phi} & X
}$$
\caption{The projections for Theorem \ref{TangentialGluing}.}
\label{TangentialFigure}
\end{figure}

We begin the proof by a following:

\begin{lemm}\label{TangentialGluingLemma}
For any $x,y\in M_1\amalg M_2$ 
\[
d_X(\Phi(\tilde{\pi}(x)),\Phi(\tilde{\pi}(y))) = \tilde{\rho}(\pi(p(x)),\pi(p(y))).
\]
\end{lemm}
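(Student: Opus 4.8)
The plan is to prove the identity by establishing two inequalities, each exploiting the fact that $X$ and $\hls{\mathcal{F}}$ are both quotients of the same object $M_1\amalg M_2$, but via different orders of projection. Concretely, the diagram in Figure \ref{TangentialFigure} shows two ways to pass from $M_1\amalg M_2$ to a quotient metric space: the upper-right path $\pi\circ p$ lands in $\hls{\mathcal{F}}$, while the lower path $\Phi\circ\tilde\pi$ lands in $X$. Both $\tilde\rho$ and $d_X$ are defined as infima over chains of jumps, so the strategy is to show that any admissible chain for one metric can be transported into an admissible chain for the other, of no greater total length, and then to let the chains approach the respective infima.

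First I would prove $d_X(\Phi(\tilde\pi(x)),\Phi(\tilde\pi(y)))\leq \tilde\rho(\pi(p(x)),\pi(p(y)))$. Here one starts with a near-optimal sequence of leaves realizing $\tilde\rho$ in $M$, i.e. leaves $L_1,\dots,L_n$ with $L_1\ni p(x)$, $L_n\ni p(y)$ and $\sum \dist(L_\nu,L_{\nu+1})$ close to $\rho(\pi(p(x)),\pi(p(y)))$. Each leaf of $\mathcal{F}$ is either a leaf of $\mathcal{F}_1$, a leaf of $\mathcal{F}_2$, or a leaf meeting the gluing locus $S_1\equiv S_2$; the relation $\sim$ defining $X$ is designed precisely so that the images of such glued leaves are identified across $\hls{\mathcal{F}_1}$ and $\hls{\mathcal{F}_2}$. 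The consecutive-distance terms realized by short curves in $M$ lie (up to subdivision) in one of the two pieces $M_i$, so each term is an upper bound for the corresponding $\dist_{g_i}$ between the projected leaves; summing and passing through $\Phi$ gives an admissible chain for $d_X$. The reverse inequality proceeds symmetrically: a near-optimal chain in the disjoint union $\hls{\mathcal{F}_1}\amalg\hls{\mathcal{F}_2}/_\sim$ consists of segments lying entirely in one of the two factors, joined at identifications coming from $h$, and each such piece lifts (using that $h$ is a leafwise isometry and $g|M_i=g_i$) to a chain of leaves in $M$ of the same total length.

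The technical heart is the bookkeeping at the gluing locus. When a minimizing chain for one metric crosses from $M_1$ to $M_2$, the jump is mediated by the identification $x\equiv h(x)$ on $S_1\cong S_2$; I would use the hypothesis that $h$ is an isometry carrying leaves to leaves to guarantee that the transition contributes no extra length and that leaf-to-leaf distances are preserved across the seam. A clean way to organize this is to observe that any finite chain can, after inserting intermediate leaves that meet $S_1\equiv S_2$, be split into maximal runs contained in a single $M_i$, so that the two sums being compared match run-by-run. Because distances $\dist$ in $M$ are bounded below by the corresponding intrinsic distances $\dist_{g_i}$ in each piece (equality holding for short curves by compactness, as in Lemma \ref{ConvergenceTheoremLemma1}), the per-run comparison yields the inequality in both directions after letting the approximating chains tend to their infima.

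The step I expect to be the main obstacle is verifying that the quotient relation $\sim$ on $\hls{\mathcal{F}_1}\amalg\hls{\mathcal{F}_2}$ captures \emph{exactly} the identifications forced by $\rho$ in $M$, neither more nor less. A leaf of $\mathcal{F}$ passing through the seam projects to a single point of $\hls{\mathcal{F}}$, and one must check that all of its $\hls{\mathcal{F}_1}$- and $\hls{\mathcal{F}_2}$-shadows are $\sim$-equivalent, which requires tracing how a leaf of $\mathcal{F}$ decomposes into leaves of $\mathcal{F}_1$ and $\mathcal{F}_2$ glued along $h$; the subtlety is that a single leaf may intersect $S_1\equiv S_2$ in several components and thereby force a nontrivial chain of identifications. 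I would handle this by arguing that the equivalence generated by the seam identification on leaves is precisely the transitive closure encoded in the definition of $\sim$, so that $\Phi\circ\tilde\pi$ and $\pi\circ p$ induce the same partition of $M_1\amalg M_2$, after which the metric identity reduces to the two length comparisons above.
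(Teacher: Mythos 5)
Your plan is essentially the paper's own proof: both directions are obtained by transporting near-minimizing chains between the two quotients of $M_1\amalg M_2$, subdividing at the seam so each run lies in a single $M_i$, using that $h$ is a leaf-preserving isometry, and letting $\epsilon\to 0$. The points you flag as delicate (subdivision at the gluing locus and the agreement of the two partitions) are exactly the points the paper handles, so the proposal is correct and follows the same route.
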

\begin{proof}
Let $\epsilon>0$. Consider points $\pi(p(x))$ and $\pi(p(y))$. By the definition of $\hls{\mathcal{F}}$, there exist points $r_1,q_1,\dots,r_k,q_k$ in the disjoint union $M_1\amalg M_2$ such that $r_{\nu},q_{\nu}$ are the points in the component $M_1$ or $M_2$, and $r_1\in L_x$, $q_k\in L_y$ ($L_x$ and $L_y$ are here the leaves of the appropriate foliation $\mathcal{F}_1$ or $\mathcal{F}_2$). Moreover, $p(q_{\nu})$ and $p(r_{\nu+1})$ lie in the same leaf of $\mathcal{F}$, and
\[
\sum_{\nu=1}^{k} \bar d(r_{\nu},q_{\nu}) \leq \tilde{\rho}(\pi(p(x)),\pi(p(y))) + \epsilon,
\]
where $\bar d$ is the length metric of disjoint union in $M_1\amalg M_2$ (see Section 1.2). Denote by $\tilde d$ the length metric of disjoint union in $\hls{\mathcal{F}_1}\amalg \hls{\mathcal{F}_2}$. By the definition of $X$, we have
\begin{align*}
\sum_{\nu=1}^{k} \bar d(r_{\nu},q_{\nu}) \geq \sum_{\nu=1}^{k} \tilde{d}(\tilde{\pi}(r_{\nu}),\tilde{\pi}(q_{\nu}))\\
\geq d_X(\Phi(\tilde{\pi}(x)),\Phi(\tilde{\pi}(y))).
\end{align*}
Finally,
\begin{equation}\label{TangentilaGluingLm1Eq1}
d_X(\Phi(\tilde{\pi}(x)),\Phi(\tilde{\pi}(y)))\leq \tilde{\rho}(\pi(p(x)),\pi(p(y))) + \epsilon
\end{equation}

Next, consider points $\Phi(\tilde{\pi}(x))$ and $\Phi(\tilde{\pi}(y))$. There exist points $r_1,q_1,\dots,r_k,q_k$ in the disjoint union $\hls{\mathcal{F}_1}\amalg \hls{\mathcal{F}_2}$ such that $\Phi(q_{\nu})=\Phi(r_{\nu+1})$ ($\nu=1,\dots,k$), $\Phi(r_1) = \Phi(\tilde{\pi}(x))$, $\Phi(q_k) = \Phi(\tilde{\pi}(y))$, and
\[
\sum_{\nu=1}^{k} \tilde d(r_{\nu},q_{\nu}) \leq d_X(\Phi(\tilde{\pi}(x)),\Phi(\tilde{\pi}(y))),
\]
where $\tilde d$ denotes again the length metric of disjoint union in $\hls{\mathcal{F}_1}\amalg \hls{\mathcal{F}_2}$. Now, for every $\nu=1,\dots,k$ one can find a sequence of leaves $L^{\nu}_1,\dots,L^{\nu}_{l_{\nu}}$ of the appropriate foliation ($\mathcal{F}_1$ if $r_{\nu},q_{\nu}\in \hls{\mathcal{F}_1}$ or $\mathcal{F}_2$ if $r_{\nu},q_{\nu}\in \hls{\mathcal{F}_2}$) satisfying $r_{\nu}\in L^{\nu}_1$, $q_{\nu}\in L^{\nu}_{l_{\nu}}$, and
\[
\sum_{\mu=1}^{l_{\nu}-1} \dist(L^{\nu}_{\mu},L^{\nu}_{\mu+1}) \leq  \tilde d(r_{\nu},q_{\nu}) + \frac{\epsilon}{k}.
\]
Since $h$ maps leaves onto leaves, one can consider the leaves described above as leaves of a foliation $\mathcal{F}$. Moreover, $\tilde{\pi}(x)\in \tilde{\pi}(L^1_1)$, and $\tilde{\pi}(y)\in \tilde{\pi}(L^k_{l_{\nu}})$. Hence we have
\begin{equation}\label{TangentilaGluingLm1Eq2}
\tilde{\rho}(\pi(p(x)),\pi(p(y))) \leq  d_X(\Phi(\tilde{\pi}(x)),\Phi(\tilde{\pi}(y))) + 2\epsilon.
\end{equation}
Passing with $\epsilon$ to zero in inequalities (\ref{TangentilaGluingLm1Eq1}) and (\ref{TangentilaGluingLm1Eq2}), we get that
\[
\tilde{\rho}(\pi(p(x)),\pi(p(y))) \leq  d_X(\Phi(\tilde{\pi}(x)),\Phi(\tilde{\pi}(y))) + 2\epsilon.
\]
This completes the proof.
\end{proof}

Now, we turn to the proof of Theorem \ref{TangentialGluing}.

\begin{proof}
Let us consider a sequence $(f_n:M\to(0,1])$ of constant functions on $M$ converging to zero. Obviously, it can be used as a sequence of warping functions. By Theorem \ref{ConvergenceTheorem}, the limit $\lim_{GH} M_{f_n}=\hls{\mathcal{F}}$. We will show, that $\lim_{GH} M_{f_n}=(X,d_X)$.

Let $\{x^1_1,\dots,x^1_{k_1}\}\subset M_1$ and $\{x^2_1,\dots,x^2_{k_2}\}\subset M_2$ be $\epsilon/2$-nets. We can assume that there exist $N\in\mathbb{N}$, $K_1\leq k_1$, and $K_2\leq k_2$ such that
\begin{itemize}
\item $\tilde{\pi}(x^1_i)\neq \tilde{\pi}(x^1_j)$ while $i\neq j$ ($i,j\leq K_1$), $\tilde{\pi}(x^2_l)\neq \tilde{\pi}(x^2_m)$ while $l\neq m$ ($l,m\leq K_2$);
\item $\{x^1_1,\dots,x^1_{K_1}\}$ is an $\epsilon$-net in $(M_1)_{\frac{1}{n}}$, while $\{x^2_1,\dots,x^2_{K_2}\}$ provides an $\epsilon$-net in $(M_2)_{\frac{1}{n}}$, $n>N$.
\end{itemize}
Denote
\begin{itemize}
\item $y_j = \Phi(\tilde{\pi}(x^1_j))$, $j=1,\dots,K_1$;
\item $y_{K_1+j} = \Phi(\tilde{\pi}(x^2_j))$, $j=1,\dots,K_2$;
\item $x_j = \pi(p(x^1_j))$, $j=1,\dots,K_1$;
\item $x_{K_1+j} = \pi(p(x^2_j))$, $j=1,\dots,K_2$.
\end{itemize}
One can easily check that $\{y_1,\dots,y_{K_1+K_2}\}$ and $\{x_1,\dots,x_{K_1+K_2}\}$ have the same number of elements, and $y_i=y_j$ iff $x_i=x_j$. Finally we get two $\epsilon$-nets $\{y_1,\dots,y_K\}$ and $\{x_1,\dots,x_K\}$ in $X$ and $\hls{\mathcal{F}}$ respectively. By Lemma \ref{TangentialGluingLemma}
\[
d_X(x_i,x_j) = \tilde{\rho}(y_i,y_j)
\]
for all $i,j\leq K$. By Lemma \ref{GromovLemma}, $d_{GH}((X,d_X), \hls{\mathcal{F}})=0$. Finally, by Theorem \ref{GromovDistanceTheorem}, we get the statement.
\end{proof}

\begin{rema}
Note that it can be impossible to construct a smooth Riemannian structure $g$ on $M$ such that $g|M_i=g_i$ ($i=1,2$). But all Riemannian structures on a compact manifold are equivalent. We slightly modify the Riemannian structures $g_i$ to obtain structures with desired properties. In this case we can only prove that $\hls{\mathcal{F}}$ of the glued foliation is homeomorphic to $(X,d_X)$.
\end{rema}

\subsection{Transverse gluing}

Following \cite{CC}, let $(M_1,\mathcal{F}_1,g_1)$, $(M_2,\mathcal{F}_1,g_1)$ be smooth compact foliated Riemannian manifolds of dimension $n$ with nonempty boundary and codimension $q$ foliations.  Suppose that $S_i\subset \partial M_i$ is a union of boundary components ($i=1,2$) and $\phi:S_1\to S_2$ is an isometry mapping leaves to leaves. Suppose further that $\mathcal{F}_i$ is $g_i$-orthogonal to $S_i$. Form a manifold $M=M_1\cup_{\phi} M_2$ from the disjoint union $M_1\amalg M_2$ by identifying $x\sim \phi(x)$. Endow $M$ with an induced foliation.

Consider the smallest equivalence relation $\sim$ in disjoint union 
\[
\hls{\mathcal{F}_1}\amalg  \hls{\mathcal{F}_2}
\]
containing the relation defined by
\[
 \tilde{\pi}(x) \sim \tilde{\pi}(\phi(x)).
\] 
Next, glue $\hls{\mathcal{F}_1}$ with $\hls{\mathcal{F}_2}$ along $\sim$ and denote the result endowed with quotient metric by $(X,d_X)$.

\begin{figure}[h]
$$
\xymatrix{
M_1 \amalg M_2 \ar[r]_{p} \ar[d]^{\tilde{\pi}} & M \ar[r]^{\pi} & \hls{\mathcal{F}}\\
\hls{\mathcal{F}_1} \amalg \hls{\mathcal{F}_2}  \ar[r]^{\hskip50pt\Phi}& X &
}
$$
\caption{The projections for Theorem \ref{TransversalGluing}.}
\label{TransversalGluingFigure}
\end{figure}

Let us denote the natural projections as shown on the Figure \ref{TransversalGluingFigure}.

\begin{lemm}\label{TransversalGluingLemma}
For any two points $x\in M_1\amalg M_2$
\[
d_X(\Phi(\tilde{\pi}(x)), \Phi(\tilde{\pi}(y))) = \tilde{\rho}(\pi(p(x)),\pi(p(y))).
\]
\end{lemm}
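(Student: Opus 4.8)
The plan is to imitate the structure of the proof of Lemma \ref{TangentialGluingLemma}, establishing the equality by proving two inequalities and then letting an auxiliary parameter $\epsilon>0$ tend to zero. The essential difference from the tangential case is geometric: here the foliations $\mathcal{F}_i$ meet the gluing hypersurface $S_i$ \emph{orthogonally} rather than tangentially, so the identification $x\sim\phi(x)$ glues points that sit on the \emph{same} leaf through the seam, and the leaves of the two pieces are concatenated transversally at $S=S_1\simeq S_2$. The role played by ``$h$ maps leaves onto leaves'' in the tangential argument is played here by the hypothesis that $\phi$ is an isometry mapping leaves to leaves together with $g_i\perp S_i$; this is what guarantees that a finite chain of leaves realizing $\tilde\rho$ in $M$ can be cut along $S$ into subchains, each living entirely in $M_1$ or $M_2$, and conversely that subchains in the two pieces can be spliced into a single admissible chain in $M$.

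First I would prove the inequality $d_X(\Phi(\tilde\pi(x)),\Phi(\tilde\pi(y)))\le\tilde\rho(\pi(p(x)),\pi(p(y)))+\epsilon$. Fix $\epsilon>0$ and choose, by the definition of $\hls{\mathcal{F}}$, points $r_1,q_1,\dots,r_k,q_k\in M_1\amalg M_2$ with $r_1\in L_x$, $q_k\in L_y$, with $p(q_\nu)$ and $p(r_{\nu+1})$ on a common leaf of $\mathcal{F}$, and with $\sum_{\nu=1}^k \bar d(r_\nu,q_\nu)\le\tilde\rho(\pi(p(x)),\pi(p(y)))+\epsilon$, where $\bar d$ is the length metric of disjoint union on $M_1\amalg M_2$. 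By the definition of the glued space $X$, each term satisfies $\bar d(r_\nu,q_\nu)\ge\tilde d(\tilde\pi(r_\nu),\tilde\pi(q_\nu))$, and the chain condition forces $\Phi(\tilde\pi(q_\nu))=\Phi(\tilde\pi(r_{\nu+1}))$ because points glued in $M$ via $\phi$ are identified in $X$ via $\sim$. Summing and using the definition of $d_X$ as an infimum over such chains yields the desired bound, exactly as in inequality \eqref{TangentilaGluingLm1Eq1}.

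For the reverse inequality I would start from a chain $r_1,q_1,\dots,r_k,q_k$ in $\hls{\mathcal{F}_1}\amalg\hls{\mathcal{F}_2}$ that almost realizes $d_X(\Phi(\tilde\pi(x)),\Phi(\tilde\pi(y)))$, with $\Phi(q_\nu)=\Phi(r_{\nu+1})$; then for each $\nu$ I would refine $\tilde d(r_\nu,q_\nu)$ into a leaf sequence $L_1^\nu,\dots,L_{l_\nu}^\nu$ in the appropriate $\mathcal{F}_i$ whose consecutive leaf-distances sum to at most $\tilde d(r_\nu,q_\nu)+\epsilon/k$. The key structural point, and where I expect the main obstacle to lie, is the \emph{splicing} step: I must argue that the identification $\Phi(q_\nu)=\Phi(r_{\nu+1})$ produced by gluing along $\phi$ lets me treat all these leaves uniformly as leaves of $\mathcal{F}$ in $M$, so that the concatenated sequence is an admissible chain computing $\tilde\rho(\pi(p(x)),\pi(p(y)))$. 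In the transverse case this requires more care than the tangential one, because the leaf-distances $\dist(L_\mu^\nu,L_{\mu+1}^\nu)$ measured inside $M_i$ must be compared with distances in $M$ near the seam $S$; the orthogonality hypothesis $\mathcal{F}_i\perp S_i$ is precisely what ensures these agree, since the minimizing transverse segments do not pick up tangential length as they cross $S$. Granting this, summing the refined estimates gives $\tilde\rho(\pi(p(x)),\pi(p(y)))\le d_X(\Phi(\tilde\pi(x)),\Phi(\tilde\pi(y)))+2\epsilon$ as in \eqref{TangentilaGluingLm1Eq2}, and letting $\epsilon\to0$ in both inequalities completes the proof.
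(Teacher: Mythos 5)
Your proposal is correct and is exactly what the paper intends: the paper's own ``proof'' of Lemma \ref{TransversalGluingLemma} consists of the single line ``Analogical to the proof of Lemma \ref{TangentialGluingLemma}. Left to the reader,'' and your two-inequality argument with the $\epsilon\to 0$ limit is precisely that analogy carried out, with the correct identification of the one genuinely new point (cutting/splicing chains at the seam $S$, where the relation $\tilde{\pi}(x)\sim\tilde{\pi}(\phi(x))$ and the orthogonality of $\mathcal{F}_i$ to $S_i$ make the leaf-distances on the two sides match up).
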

\begin{proof}
Analogical to the proof of Lemma \ref{TangentialGluingLemma}. Left to the reader.
\end{proof}

\begin{theo}\label{TransversalGluing}
$\hls{\mathcal{F}}$ coincides with $(X,d_X)$.
\end{theo}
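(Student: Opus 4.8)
The plan is to follow the proof of Theorem \ref{TangentialGluing} almost verbatim, using Lemma \ref{TransversalGluingLemma} as the single nontrivial ingredient. First I would fix a sequence $(f_n:M\to(0,1])$ of constant functions converging to zero and regard it as a sequence of warping functions. By Theorem \ref{ConvergenceTheorem} the Gromov-Hausdorff limit $\lim_{GH}M_{f_n}$ is isometric to $\hls{\mathcal{F}}$, so it suffices to prove that the same sequence converges to $(X,d_X)$ and then apply Theorem \ref{GromovDistanceTheorem}. The whole argument is therefore a matter of exhibiting, for each $\epsilon>0$, a pair of finite $\epsilon$-nets in $X$ and in $\hls{\mathcal{F}}$ whose mutual distances agree, and invoking the Gromov Lemma \ref{GromovLemma}.

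To build the nets I would start from $\epsilon/2$-nets $\{x^1_1,\dots,x^1_{k_1}\}\subset M_1$ and $\{x^2_1,\dots,x^2_{k_2}\}\subset M_2$, thin them to subsets on which $\tilde{\pi}$ is injective, and choose $N$ so that for $n>N$ these subsets are $\epsilon$-nets in the warped pieces $(M_1)_{\frac{1}{n}}$ and $(M_2)_{\frac{1}{n}}$. Pushing them forward through $\Phi\circ\tilde{\pi}$ produces a finite subset $\{y_1,\dots,y_K\}\subset X$, and through $\pi\circ p$ a finite subset $\{x_1,\dots,x_K\}\subset\hls{\mathcal{F}}$; because the relation $\sim$ is defined symmetrically by $\tilde{\pi}(x)\sim\tilde{\pi}(\phi(x))$, the two sets have equal cardinality and satisfy $y_i=y_j$ iff $x_i=x_j$. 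Lemma \ref{TransversalGluingLemma} then gives $d_X(y_i,y_j)=\tilde{\rho}(x_i,x_j)$ for all $i,j\leq K$, so the hypotheses of Lemma \ref{GromovLemma} are met exactly and force $d_{GH}((X,d_X),\hls{\mathcal{F}})=0$.

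The one genuine point of care, and the step I would write out rather than leave to the routine, is the verification that the chosen sets are indeed $\epsilon$-nets in the warped metric near the glued hypersurface $S_1\cong S_2$. In contrast to the tangential case, here $\mathcal{F}_i$ meets $S_i$ orthogonally, so a single leaf of $\mathcal{F}$ can cross the seam from $M_1$ into $M_2$; shrinking the leaf directions by the constant factor $\frac{1}{n}$ then collapses transverse chains running through $S_1$, and one must control this collapse uniformly in $n$. The orthogonality hypothesis is precisely what guarantees that the warped distance across the seam behaves like the quotient distance $d_X$, so the hard part will be checking that net density survives the warping, i.e. that every point of $(M_i)_{\frac{1}{n}}$ stays within $\epsilon$ of the thinned net even for points adjacent to $S_i$. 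Once this is secured the remainder is the identical net-and-Gromov-Lemma packaging already used for tangential gluing.
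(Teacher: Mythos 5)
Your argument is correct, but it is not the route the paper takes for the transverse case. You transplant the warped-foliation machinery from the proof of Theorem \ref{TangentialGluing}: constant warping functions $f_n\to 0$, Theorem \ref{ConvergenceTheorem} to identify $\lim_{GH}M_{f_n}$ with $\hls{\mathcal{F}}$, thinned nets that are $\epsilon$-dense in the warped pieces $(M_i)_{1/n}$, and only then the comparison with $X$. The paper's own proof of Theorem \ref{TransversalGluing} skips the warping entirely: it takes $\epsilon$-nets $A_1\subset M_1\setminus\partial M_1$ and $A_2\subset M_2\setminus\partial M_2$, observes that $\pi(p(A_1\cup A_2))$ and $\Phi(\tilde{\pi}(A_1\cup A_2))$ are $\epsilon$-nets of equal cardinality in $\hls{\mathcal{F}}$ and $X$, and concludes at once from Lemma \ref{TransversalGluingLemma} and the Gromov Lemma. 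The point is that Lemma \ref{TransversalGluingLemma} already gives \emph{exact} equality of the relevant distances, and it also settles the cardinality matching (two net points have the same image in $X$ iff they have the same image in $\hls{\mathcal{F}}$, since both conditions are "distance zero"), so the warped spaces are never needed as intermediaries. Your extra worry about net density surviving the warping near the seam is legitimate within your route but easily discharged: since $f_n\leq 1$, the warped metric satisfies $g_{f_n}\leq g$, so distances only decrease and $\epsilon$-nets stay $\epsilon$-nets, while the thinning is controlled because points identified by $\tilde{\pi}$ become arbitrarily close in $(M_i)_{1/n}$ by Theorem \ref{ConvergenceTheorem} applied to each piece. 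In short, your proof works, but the direct net argument is shorter and makes the orthogonality-at-the-seam issue a non-issue, since all the boundary behaviour is already absorbed into Lemma \ref{TransversalGluingLemma}.
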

\begin{proof}
Denote by $A_1=\{x_1,\dots,x_k\}\subset M_1\setminus \partial M_1$ and $A_2=\{y_1,\dots,y_m\}\subset M_2\setminus \partial M_2$ two $\epsilon$-nets. One can easily check that $p(A_1\cup A_2)$ is an $\epsilon$-net in $M$, $\pi(p(A_1\cup A_2))$ is an $\epsilon$-net in $\hls{\mathcal{F}}$ and $\Phi(\tilde{\pi}(A_1\cup A_2))$ is an $\epsilon$-net in $X$. Moreover, by the construction of $X$ we have that 
\[
\sharp(\Phi(\tilde{pi}\pi(A_1\cup A_2))) = \sharp(\pi(p(A_1\cup A_2))).
\]
Lemma \ref{TransversalGluingLemma} and Lemma \ref{GromovLemma} yield the statement.
\end{proof}

\begin{figure}[h]
 \centering
 \includegraphics{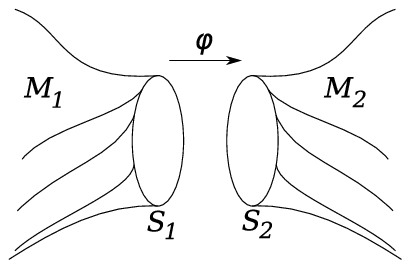}\hskip2cm
 \includegraphics{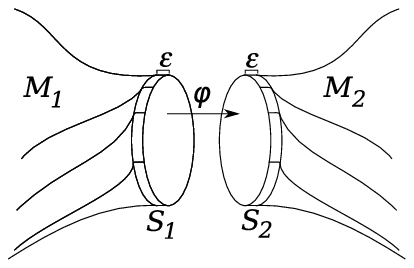}
 \caption{Transverse gluing.}
 \label{TransverseDrawing}
\end{figure}

\begin{rema}
Of course, not every foliation transverse to the boundary component is orthogonal to it. But one can easily modify (see \cite{CC}) any transverse foliation to obtain a foliation orthogonal to the boundary component with the same space as the Hausdorff leaf space (see Figure \ref{TransverseDrawing}). Hence, Theorem \ref{TransversalGluing} is true for any foliations transverse to the boundary.
\end{rema}

\subsection{Turbulization}

Let now $(M,\mathcal{F},g)$  be a foliated Riemannian manifold of dimension $n+1$ endowed with a codimension one foliation which is leaf-wise and transversely orientable. Let $\gamma:[0,1]\to M$ be a closed transversal curve and let $N(\gamma)$ be a fixed foliated tubular neighbourhood of $\gamma$. Let us equip  $N(\gamma)=D^n\times S^1$ with cylindrical coordinates $(r,z,t)$ (we take $t$ {\em modulo} $1$, and the leaves of $\mathcal{F}|N(\gamma)$ are the sets $D^n\times \{t\}$). Let 
\[
\omega = \cos \lambda(r) {\rm d}r + \cos \lambda(r) {\rm d}t,
\]
where $\lambda:[0,1]\to [-\pi/2,\pi/2]$ is a smooth, strictly increasing on $[0,3/4]$ function satisfying $\lambda(0)=-\pi/2$, $\lambda(2/3)=0$, $\lambda(t)=\pi/2$ for all $t\geq 3/4$, and with derivatives of all orders at zero vanishing. Since $\omega$ is integrable, it defines a foliation $\mathcal{F}_{\gamma}$ of $N(\gamma)$, which agrees with $\mathcal{F}$ near $\partial N(\gamma)$ and has a Reeb component $R$ inside $N(\gamma)$. Modified foliation $\mathcal{F}_\gamma$ of $M$ is called a {\it turbulized} foliation, while this deformation is called {\it turbulization} \cite{CC} (Figure \ref{TurbulizationFigure1}).

\begin{figure}[h]
 \centering
 \includegraphics{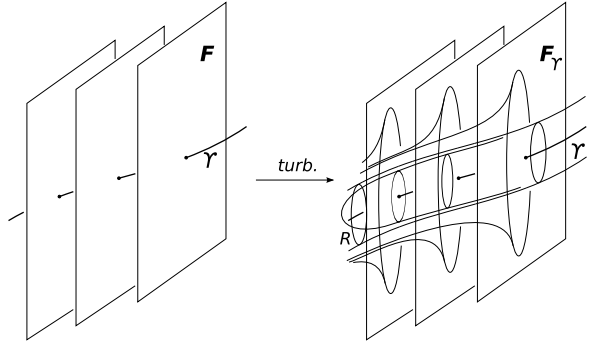}
 \caption{Turbulization.}
 \label{TurbulizationFigure1}
\end{figure}

Denote by $L_x$ ($L^{\gamma}_x$) the leaf of $\mathcal{F}$ ($\mathcal{F}_{\gamma}$) passing through $x\in M$. Next, let $\pi:M\to \hls{\mathcal{F}}$ be a natural projection, and let $X$ be a metric space obtained from $\hls{\mathcal{F}}$ by identification $\pi(\gamma([0,1]))$ to a point (Figure \ref{TurbulizationFigure2}). Equip $X$ with the quotient metric $d_X$.

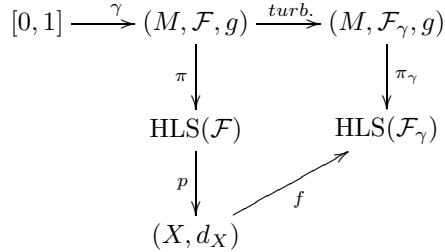
\begin{figure}
$$
\xymatrix{
[0,1] \ar[r]^{\gamma} & (M,\mathcal{F},g) \ar[d]_{\pi} \ar[r]^{turb.} & (M,\mathcal{F}_{\gamma},g) \ar[d]^{\pi_{\gamma}}\\
 & \hls{\mathcal{F}} \ar[d]_{p} & \hls{\mathcal{F}_{\gamma}}\\
 & (X,d_X) \ar[ru]_{f} & 
}
$$
\caption{The projections for Theorem \ref{Turbulization}.}
\end{figure}

\begin{theo}\label{Turbulization}
$\hls{\mathcal{F}_{\gamma}}$ is isometric with $(X, d_X)$.
\end{theo}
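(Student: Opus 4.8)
The plan is to reuse the same strategy that proved Theorem~\ref{TangentialGluing} and Theorem~\ref{TransversalGluing}: construct a sequence of warped foliations $(M,\mathcal{F}_\gamma,g_{f_n})$ with constant warping functions $f_n\to 0$, invoke Theorem~\ref{ConvergenceTheorem} to conclude that $\lim_{GH} M_{f_n}=\hls{\mathcal{F}_\gamma}$, and then show independently that the same sequence of warped foliations converges to $(X,d_X)$ in the Gromov--Hausdorff topology. By Theorem~\ref{GromovDistanceTheorem}, two Gromov--Hausdorff limits of one sequence must be isometric, which yields the claim. The isometry is then realized by the map $f$ appearing in the commutative diagram, and one checks $\pi_\gamma = f\circ p\circ\pi$ away from the collapsed set.

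First I would establish the metric analogue of Lemma~\ref{TangentialGluingLemma}, namely that the quotient projection $p:\hls{\mathcal{F}}\to X$ is distance-preserving on the complement of the collapsed arc $\pi(\gamma([0,1]))$. The essential geometric input is that turbulization changes $\mathcal{F}$ only inside the tubular neighbourhood $N(\gamma)$, and inside $N(\gamma)$ the new leaves spiral onto the boundary torus and into the Reeb component $R$. I would argue that every leaf of $\mathcal{F}_\gamma$ meeting the interior of $N(\gamma)$ accumulates on $\partial N(\gamma)$, so that for the relation $\equiv$ of Remark~2 (inclusion in leaf closures) all such leaves become identified with leaves touching $\partial N(\gamma)$; since $\partial N(\gamma)$ is a leaf of the original $\mathcal{F}$ through a point of $\gamma$, the entire Reeb block collapses to the single point $\pi(\gamma([0,1]))$ of $X$. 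Outside $N(\gamma)$ the two foliations coincide, so $\tilde\rho$ for $\mathcal{F}_\gamma$ agrees with the pushed-forward distance $d_X$ there, exactly as $d_X$ was defined by identifying $\pi(\gamma([0,1]))$ to a point.

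Next I would run the $\varepsilon$-net argument. Take an $\varepsilon/2$-net in $(M,g)$, push it through $\pi$ and through $\pi_\gamma$, discard duplicates under the respective equivalence relations, and verify that the resulting finite sets are $\varepsilon$-nets in $X$ and in $\hls{\mathcal{F}_\gamma}$ respectively, with matching cardinalities and pairwise distances equal by the lemma above. The contraction $f_n\to 0$ along leaves guarantees that within the collapsing Reeb region the warped diameters tend to zero uniformly, so the points there all coalesce to a single net point in the limit, matching the single collapsed point in $X$. Lemma~\ref{GromovLemma} then gives $d_{GH}(M_{f_n},X)\to 0$, hence $d_{GH}(X,\hls{\mathcal{F}_\gamma})=0$.

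The main obstacle I anticipate is controlling the leaf-space distance inside the turbulized neighbourhood, precisely because the leaves of $\mathcal{F}_\gamma$ there are noncompact and spiral without ever closing up; one must verify carefully that $\rho$-distances between these spiralling leaves and the boundary torus actually vanish, so that the whole Reeb component collapses to one point rather than to a nondegenerate arc. Establishing this requires showing that one can travel along a transversal from deep inside $R$ out to $\partial N(\gamma)$ through a chain of leaves whose successive leaf-to-leaf distances sum to an arbitrarily small number, exploiting that $\lambda$ has all derivatives vanishing at $0$ and that the leaves asymptote to the torus. Once this collapse is rigorously justified, the remainder is a routine adaptation of the gluing proofs, and the diagram map $f$ provides the explicit isometry.
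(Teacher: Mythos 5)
Your strategy is workable but it is not the route the paper takes, and the detour you propose is heavier than necessary. The paper does not use warped foliations or Gromov--Hausdorff limits here at all: it defines the explicit map $f(p(\pi(x)))=\pi_{\gamma}(L^{\gamma}_x)$ from $X$ to $\hls{\mathcal{F}_{\gamma}}$, proves it is a well-defined bijection (Lemma~\ref{TurbulizationLemma5}), and proves it is an isometry by establishing the two inequalities $d_X(p(\pi(x)),p(\pi(y)))\leq \tilde{\rho}_{\gamma}(\pi_{\gamma}(x),\pi_{\gamma}(y))$ and its converse (Lemmas~\ref{TurbulizationLemma3} and~\ref{TurbulizationLemma4}), which in turn rest on a chain-conversion lemma (Lemma~\ref{TurbulizationLemma1}) turning any chain of $\mathcal{F}$-leaves into a chain of at most three $\mathcal{F}_{\gamma}$-leaves of essentially the same total length, and on the single-distance comparison of Lemma~\ref{TurbulizationLemma2}. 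Your $\varepsilon$-net/GH-limit scaffolding would also work (compactness of $X$ and $\hls{\mathcal{F}_{\gamma}}$ makes ``two GH limits of one sequence are isometric'' legitimate via Theorem~\ref{GromovDistanceTheorem}), but it buys nothing: to make the nets have matching pairwise distances you must first prove exactly the two-sided comparison between $d_X$ and $\tilde{\rho}_{\gamma}$, i.e.\ the content of Lemmas~\ref{TurbulizationLemma2}--\ref{TurbulizationLemma4}, after which the isometry follows directly without any limit argument. You correctly identify this comparison, together with the collapse of the Reeb block to a point, as the real work, but you leave it as an ``anticipated obstacle'' rather than carrying it out.

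One claim in your sketch is imprecise and should be repaired if you pursue this route: the quotient projection $p:\hls{\mathcal{F}}\to X$ is \emph{not} in general distance-preserving on the complement of the collapsed arc $\pi(\gamma([0,1]))$, because collapsing a set to a point in the quotient metric can create shortcuts and strictly decrease distances between points far from that set. What is actually needed (and what you appeal to elsewhere) is the equality $d_X(p(\pi(x)),p(\pi(y)))=\tilde{\rho}_{\gamma}(\pi_{\gamma}(x),\pi_{\gamma}(y))$, which compares $X$ with $\hls{\mathcal{F}_{\gamma}}$, not with $\hls{\mathcal{F}}$; the inequality ``$\leq$'' uses that chains of $\mathcal{F}_{\gamma}$-leaves project to chains in $X$ of no greater length, and ``$\geq$'' uses the chain-conversion in the other direction. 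Also note that the collapse of the Reeb region needs no delicate chain argument: any two leaves of $\mathcal{F}_{\gamma}$ meeting $N(\gamma)$ accumulate on the boundary torus of the Reeb component, so their leaf-to-leaf distance is already zero, which is how the paper disposes of this case in Lemma~\ref{TurbulizationLemma1}.
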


\begin{figure}[h]
 \centering
 \includegraphics{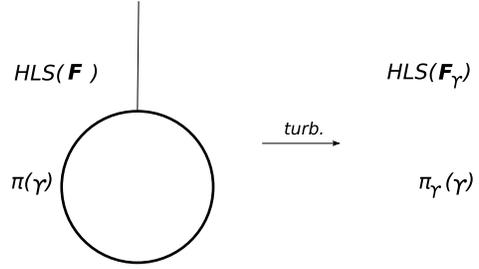}
 \caption{HLS for turbulized foliation.}
 \label{TurbulizationFigure2}
\end{figure}

Before we start a proof, we shall prove technical lemmas.

\begin{lemm}\label{TurbulizationLemma1}
For every two leaves $L_1,L_2\in\mathcal{F}$ and every $\epsilon>0$ there exists a finite sequence of leaves $F_1,\dots,F_k \in\mathcal{F}_{\gamma}$, $k\leq 3$, satisfying
\begin{enumerate}
 \item $F_1\setminus N(\gamma) = L_1\setminus N(\gamma)$ and $F_k\setminus N(\gamma) = L_2\setminus N(\gamma)$,
 \item $\sum_{\nu=1}^{k-1} \dist(F_{\nu},F_{\nu+1}) \leq \dist (L_1,L_2) + \epsilon$.
\end{enumerate}
\end{lemm}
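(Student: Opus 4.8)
The plan is to exploit that turbulization alters $\mathcal{F}$ only inside $N(\gamma)$, where it creates a Reeb component $R$ whose boundary is a single compact leaf $T$ (the torus sitting at the radius on which $\lambda$ vanishes, i.e. $r=2/3$), and that $\mathcal{F}_{\gamma}$ coincides with $\mathcal{F}$ on $M\setminus \mathring N(\gamma)$ and near $\partial N(\gamma)$. First I would set up a correspondence $L\mapsto F(L)$: for each leaf $L$ of $\mathcal{F}$ let $F(L)$ be the leaf of $\mathcal{F}_{\gamma}$ that agrees with $L$ off $N(\gamma)$. If $L\cap N(\gamma)=\emptyset$ then $F(L)=L$; if $L$ meets $N(\gamma)$, then $L\setminus N(\gamma)$ together with the turbulized spiral plaques attached along the boundary circles $L\cap\partial N(\gamma)$ forms a single connected leaf $F(L)$ of $\mathcal{F}_{\gamma}$, since replacing each plaque-disk of $L$ by a spiral keeps the attaching circles and hence preserves connectedness. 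Taking $F_1=F(L_1)$ and $F_k=F(L_2)$ then makes requirement (1) automatic, so the whole task reduces to producing a chain of at most three leaves between $F(L_1)$ and $F(L_2)$ obeying the length bound (2).

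The key geometric observation I would isolate is that every turbulized leaf winds infinitely onto $T$ as the radius decreases to that of $T$; hence, whenever $L$ meets $N(\gamma)$, the torus $T$ lies in the closure of $F(L)$ and therefore $\dist(F(L),T)=0$. With this in hand the proof splits into three cases according to how $L_1,L_2$ meet $N(\gamma)$, matching the bound $k\le 3$. If both $L_1,L_2$ meet $N(\gamma)$, I take the three-leaf chain $F(L_1),T,F(L_2)$; by the observation its total length equals $\dist(F(L_1),T)+\dist(T,F(L_2))=0\le \dist(L_1,L_2)+\epsilon$. If neither meets $N(\gamma)$, then $F(L_i)=L_i$ and, since turbulization does not change $g$, the closest-approach distance is literally unchanged, so the two-leaf chain $F(L_1),F(L_2)$ works with equality.

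The remaining, and genuinely delicate, case is the \emph{mixed} one, say $L_1$ meets $N(\gamma)$ but $L_2$ does not (so $F(L_2)=L_2$ lies entirely outside $N(\gamma)$). Here the $T$-bridge is useless, because the distance from $T$ to $L_2$ is uncontrolled, so I must obtain the bound with the two-leaf chain $F(L_1),L_2$; that is, I must show the nearest point of $L_1$ to $L_2$ can be taken in $L_1\setminus\mathring N(\gamma)\subset F(L_1)$. I expect this to be the main obstacle. The plan is a sub-lemma: for $q\in M\setminus N(\gamma)$ the function $p\mapsto d(p,q)$ on a plaque $D^n\times\{t_0\}$ of $L_1$ attains its minimum on the boundary circle $\partial D^n\times\{t_0\}$; indeed a minimizing geodesic realizing an interior minimum would leave the plaque orthogonally, i.e. along the transverse $S^1$-direction, and would therefore have to cross $\partial N(\gamma)$ before reaching $q$, making a boundary point of the plaque at least as close. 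Granting this sub-lemma, a near-minimizer $p^*\in L_1,\ q^*\in L_2$ with $d(p^*,q^*)\le\dist(L_1,L_2)+\epsilon$ may be chosen with $p^*\in L_1\setminus\mathring N(\gamma)\subset F(L_1)$, whence $\dist(F(L_1),L_2)\le\dist(L_1,L_2)+\epsilon$ and the two-leaf chain finishes the argument.

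In summary, the essential content lies in the clean accumulation fact $\dist(F(L),T)=0$, which collapses the two-sided case for free, and in the sub-lemma that the closest approach of a leaf to the exterior is realized on $\partial N(\gamma)$. Everything else is bookkeeping. The single step that I expect to require real care is controlling geodesics normal to the plaques inside the tubular neighborhood for the general (non-product) metric $g$, since the product picture used to locate the orthogonal geodesics is only a model for the foliated neighborhood $N(\gamma)$.
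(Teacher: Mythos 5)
Your decomposition by cases and your ``key geometric observation'' ($T\subset\operatorname{cl}F(L)$ for every leaf $L$ meeting $N(\gamma)$, hence $\dist(F(L),T)=0$) are sound and match the spirit of the paper, but your treatment of the mixed case has a genuine gap, and it is exactly the case for which the hypothesis $k\leq 3$ is provided. You insist on the two-leaf chain $F(L_1),L_2$ and reduce everything to the sub-lemma that the closest approach of $L_1$ to an exterior leaf is realized on $L_1\setminus \mathring N(\gamma)$. That sub-lemma is not established by your sketch: first variation at an interior minimum only gives that the minimizing geodesic leaves the plaque orthogonally, which says nothing about where it meets $\partial N(\gamma)$, and the point where it does cross $\partial N(\gamma)$ generally lies on a \emph{different} leaf of $\mathcal{F}$, not on $\partial(L_1\cap N(\gamma))$. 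Worse, the statement itself can fail: the near-minimizing point $x\in L_1$ may sit deep inside $N(\gamma)$ (even inside the region that becomes the Reeb component), and for a general tubular-neighbourhood metric nothing forces a point of $L_1\cap\partial N(\gamma)$, nor of the spiralling part of $F(L_1)$, to be within $\epsilon$ of $L_2$; the error one can extract this way is of the order of the diameter of $N(\gamma)$, not of $\epsilon$.

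The paper's proof avoids this entirely by choosing near-minimizers $x\in L_1$, $y\in L_2$ with $d(x,y)\leq\dist(L_1,L_2)+\epsilon$ and casing on whether $x,y$ lie in $N(\gamma)$ (not on whether the leaves meet it). In the mixed case ($x\in N(\gamma)$, $y\notin N(\gamma)$) it takes the three-leaf chain $F_1=L^{\gamma}_z$ with $z\in L_1\setminus N(\gamma)$ (so condition (1) holds), $F_2=L^{\gamma}_x$, $F_3=L^{\gamma}_y$: then $\dist(F_1,F_2)=0$ because both $F_1$ and $F_2$ accumulate on $T$ --- this is precisely your own key observation --- and $\dist(F_2,F_3)\leq d(x,y)$ because $x\in F_2$ and $y\in F_3$. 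Note that $F_2$ need not satisfy any compatibility with $L_1$ outside the tube; only $F_1$ and $F_k$ are constrained by condition (1). You spend the third leaf (the bridge through $T$) in the case where both leaves meet $N(\gamma)$, where two leaves already suffice since $\dist(F(L_1),F(L_2))=0$ directly, and are then left without it in the mixed case where it is indispensable. With that reallocation your argument closes, and the delicate geodesic analysis you flag as the main obstacle becomes unnecessary.
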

\begin{proof}
Let $\epsilon>0$, and $x\in L_1$, $y\in L_2$ be such that $d(x,y)\leq \dist(L_1,L_2)+\epsilon$. We shall consider three cases:
\begin{enumerate}
 \item $x,y\notin M\setminus N(\gamma)$. Put $F_1=L^{\gamma}_x$ and $F_2=L^{\gamma}_y$. Then $ \dist(F_1,F_2)\leq d(x,y)\leq\dist(L_1,L_2)+\epsilon$. 
 \item $x,y\in N(\gamma)$. By the definition of the turbulization we have that $\dist(L^{\gamma}_x,L^{\gamma}_y)=0\leq \dist (L_1,L_2)$.
 \item $x\in N(\gamma)$ and $y\in M\setminus N(\gamma)$. Let $z\in L_1\setminus N(\gamma)$. Put $F_1=L^{\gamma}_z$, $F_2=L^{\gamma}_x$, $F_3=L^{\gamma}_y$. We have
\[
\dist(F_1,F_2) + \dist(F_1,F_2) \leq 0 + d(x,y) \leq \dist(L_1,L_2)+\epsilon.
\]
\end{enumerate}
This completes the proof.
\end{proof}

\begin{lemm}\label{TurbulizationLemma2}
For any $x,y\in M$ the following inequality holds:
\[
d_X(p(\pi(x)),p(\pi(y)))\leq \dist(L^{\gamma}_x, L^{\gamma}_x).
\]
\end{lemm}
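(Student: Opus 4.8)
The plan is to reduce the statement to one structural fact and then read off the inequality from the definition of the quotient metric. The fact I would isolate first is that the composite $p\circ\pi\colon M\to X$ is constant along every leaf of $\mathcal{F}_\gamma$. Granting it, the estimate is immediate: fix $\epsilon>0$, pick $a\in L^\gamma_x$ and $b\in L^\gamma_y$ with $d(a,b)\le\dist(L^\gamma_x,L^\gamma_y)+\epsilon$ (reading the right-hand side of the statement as $\dist(L^\gamma_x,L^\gamma_y)$), and use that $a,x$ lie on one leaf of $\mathcal{F}_\gamma$ and $b,y$ on another to get $p(\pi(x))=p(\pi(a))$ and $p(\pi(y))=p(\pi(b))$. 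Since $X$ carries the quotient metric, $p$ is $1$-Lipschitz, whence
\[
d_X(p(\pi(x)),p(\pi(y)))=d_X(p(\pi(a)),p(\pi(b)))\le\tilde\rho(\pi(a),\pi(b)).
\]
The two-element chain $L_a,L_b$ in the definition of $\rho$ then gives $\tilde\rho(\pi(a),\pi(b))=\rho(L_a,L_b)\le\dist(L_a,L_b)\le d(a,b)\le\dist(L^\gamma_x,L^\gamma_y)+\epsilon$, and letting $\epsilon\to0$ closes the argument.

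Everything thus hinges on the claim that $p\circ\pi$ is constant on each leaf $L^\gamma$ of $\mathcal{F}_\gamma$, and I would prove it by splitting on whether $L^\gamma$ meets the tube $N(\gamma)$. If $L^\gamma\cap N(\gamma)=\emptyset$, then, because $\mathcal{F}_\gamma$ coincides with $\mathcal{F}$ outside $N(\gamma)$, the set $L^\gamma$ is a single leaf of $\mathcal{F}$; hence $\pi$, and a fortiori $p\circ\pi$, is constant on it. If $L^\gamma\cap N(\gamma)\neq\emptyset$, I would show that $p(\pi(z))$ equals the point $\ast$ to which $\Gamma:=\pi(\gamma([0,1]))$ is collapsed, for every $z\in L^\gamma$. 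The key sub-claim is that the $\mathcal{F}$-leaf $L_z$ through such a $z$ always meets $N(\gamma)$: if $z\in N(\gamma)$ this is clear, and if $z\notin N(\gamma)$ then the component of $L^\gamma\setminus N(\gamma)$ through $z$ has its closure meeting $\partial N(\gamma)$, so, again using the coincidence of the two foliations near $\partial N(\gamma)$, the leaf $L_z$ runs into $N(\gamma)$. Since the transversal $\gamma$ is the core of $N(\gamma)$ and hence passes through every disk-leaf $D^n\times\{t\}$, an $\mathcal{F}$-leaf meets $N(\gamma)$ exactly when it meets $\gamma$, i.e. exactly when its class lies in $\Gamma$. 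Therefore $\pi(z)\in\Gamma$ and $p(\pi(z))=\ast$ for all $z\in L^\gamma$.

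The main obstacle is exactly this leaf bookkeeping inside the tube: one must be certain that as soon as a leaf of $\mathcal{F}_\gamma$ enters $N(\gamma)$, the ambient $\mathcal{F}$-leaf of each of its points is forced to reach $\gamma$, so that the whole of $L^\gamma$ is sent to the single identified point of $X$. This is where the specific geometry of the turbulization enters, namely the agreement of $\mathcal{F}$ and $\mathcal{F}_\gamma$ near $\partial N(\gamma)$ together with the fact that the $\mathcal{F}$-leaves inside $N(\gamma)$ are the disks $D^n\times\{t\}$ centred on $\gamma$. The remaining steps are purely formal, relying only on the $1$-Lipschitz property of the quotient projection and on $\dist(L_a,L_b)\le d(a,b)$. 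I note that Lemma \ref{TurbulizationLemma1} is not used for this inequality; it is the tool for the opposite estimate needed to complete the isometry in Theorem \ref{Turbulization}.
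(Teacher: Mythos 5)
Your proof is correct and rests on the same two facts that drive the paper's argument: an $\mathcal{F}_\gamma$-leaf disjoint from $N(\gamma)$ is an $\mathcal{F}$-leaf, while every point of an $\mathcal{F}_\gamma$-leaf meeting $N(\gamma)$ lies on an $\mathcal{F}$-leaf passing through $\gamma$ and is therefore sent by $p\circ\pi$ to the collapsed point of $X$. The difference is purely organizational: you isolate these as the single statement that $p\circ\pi$ is constant on $\mathcal{F}_\gamma$-leaves and then conclude uniformly via the $1$-Lipschitz property of the quotient projection, whereas the paper runs the same reasoning through three cases with further sub-cases on whether the near-minimizing points lie in $N(\gamma)$; you also correctly read the statement's $\dist(L^{\gamma}_x,L^{\gamma}_x)$ as the intended $\dist(L^{\gamma}_x,L^{\gamma}_y)$.
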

\begin{proof}
Let $x,y\in M$. We shall consider few cases:

\noindent {\bf Case 1.} $L^{\gamma}_x\cap N(\gamma) = L^{\gamma}_y\cap N(\gamma) = \emptyset$. Then $L_x = L^{\gamma}_x$, $L_y = L^{\gamma}_y$, and 
\[
d_X(p(\pi(x)),p(\pi(y)))\leq \dist(L^{\gamma}_x, L^{\gamma}_x).
\]

\noindent {\bf Case 2.} $L^{\gamma}_x\cap N(\gamma)\neq \emptyset$, $L^{\gamma}_x\cap N(\gamma)\neq\emptyset$. Then, by the construction of $\mathcal{F}_{\gamma}$,
\[
L_x\cap N(\gamma)\neq \emptyset,\quad {\rm and}\quad L_y\cap N(\gamma)\neq \emptyset.
\]
Finally,
\[
d_X(p(\pi(x)),p(\pi(y)))=0\leq \dist(L^{\gamma}_x, L^{\gamma}_x).
\]

\noindent {\bf Case 3.} $L^{\gamma}_x\cap N(\gamma)\neq \emptyset$, $L^{\gamma}_x\cap N(\gamma)=\emptyset$. Let $\epsilon>0$, and $r\in L^{\gamma}_x$, $q\in L^{\gamma}_y$ be such points that $d(r,q)\leq \dist(L^{\gamma}_x,L^{\gamma}_y) + \epsilon$.

Suppose first that $x\notin N(\gamma)$, $r\notin N(\gamma)$. Then $L_x=L_r$, $L_y=L_q$, and
\begin{align*}
d_X(p(\pi(x)),p(\pi(y))) \leq \dist(L_x,L_y) \leq d(r,q)\\
\leq \dist(L^{\gamma}_x, L^{\gamma}_x).
\end{align*}
Next, suppose that $x\notin N(\gamma)$, $r\in N(\gamma)$. Set $L_1=L_x$, $L_2=L_r$, $L_3=L_y$. Recall that $L_q=L_y$. Hence, by Case 1,
\begin{align*}
d_X(p(\pi(x)),p(\pi(y))) \leq d_X(p(\pi(x)),p(\pi(r))) + d_X(p(\pi(r)),p(\pi(q)))\\
\leq 0+ d(r,q) \leq \dist(L^{\gamma}_x, L^{\gamma}_x)+\epsilon.
\end{align*}
Analogically we show that 
\[
d_X(p(\pi(x)),p(\pi(y))) \leq \dist(L^{\gamma}_x, L^{\gamma}_x),
\]
for $x\in N(\gamma)$, $r\notin N(\gamma)$, and $x\in N(\gamma)$, $r\in N(\gamma)$.

Passing with $\epsilon$ to zero gives the desired inequality.
\end{proof}

\begin{lemm}\label{TurbulizationLemma3}
For any $x,y\in M$ we have
\[
d_X(p(\pi(x)),p(\pi(y)))\leq \tilde{\rho}_\gamma(\pi_{\gamma}(L^{\gamma}_x), \pi_{\gamma}(L^{\gamma}_x)).
\]
\end{lemm}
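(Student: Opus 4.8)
The plan is to reduce the whole statement to Lemma \ref{TurbulizationLemma2}, which already controls a single ``leaf-distance step'', and then to chain such steps along a sequence of leaves of $\mathcal{F}_\gamma$ realizing $\tilde\rho_\gamma$. In effect, I want to exhibit $p\circ\pi$ as a non-expanding map that factors through the leaf space of $\mathcal{F}_\gamma$.

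First I would extract a consequence of Lemma \ref{TurbulizationLemma2}: applying it to two points $u,v$ lying in the \emph{same} leaf of $\mathcal{F}_\gamma$ gives $\dist(L^{\gamma}_u,L^{\gamma}_v)=0$, whence $d_X(p(\pi(u)),p(\pi(v)))=0$, i.e. $p(\pi(u))=p(\pi(v))$. Thus $p\circ\pi$ is constant on each leaf of $\mathcal{F}_\gamma$ and descends to a well-defined map $\Psi$ sending a leaf $F$ of $\mathcal{F}_\gamma$ to the common value $p(\pi(u))$, $u\in F$. In particular $\Psi(L^{\gamma}_x)=p(\pi(x))$ and $\Psi(L^{\gamma}_y)=p(\pi(y))$.

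Next I would recall that, by the definition of the HLS metric,
\[
\tilde\rho_\gamma(\pi_{\gamma}(L^{\gamma}_x),\pi_{\gamma}(L^{\gamma}_y))=\inf\Big\{\sum_{i=1}^{n-1}\dist(F_i,F_{i+1})\Big\},
\]
the infimum being taken over finite sequences of leaves $F_1=L^{\gamma}_x,\dots,F_n=L^{\gamma}_y$ of $\mathcal{F}_\gamma$. Fixing such a sequence and choosing an arbitrary point in each $F_i$, Lemma \ref{TurbulizationLemma2} applied to consecutive leaves yields
\[
d_X(\Psi(F_i),\Psi(F_{i+1}))\leq \dist(F_i,F_{i+1}),\qquad i=1,\dots,n-1.
\]
The triangle inequality for $d_X$ then gives
\[
d_X(p(\pi(x)),p(\pi(y)))=d_X(\Psi(F_1),\Psi(F_n))\leq\sum_{i=1}^{n-1}\dist(F_i,F_{i+1}),
\]
and passing to the infimum over all such sequences produces the claimed bound.

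The only point requiring care, and the one I regard as the heart of the argument, is the well-definedness of $\Psi$ on leaves of $\mathcal{F}_\gamma$; but as noted this is immediate from Lemma \ref{TurbulizationLemma2} applied within a single leaf. After that the estimate is a routine chaining argument, and notably no $\epsilon$ is needed here, since Lemma \ref{TurbulizationLemma2} is an exact one-step inequality rather than an approximate one; the infimum in the definition of $\tilde\rho_\gamma$ is handled simply by proving the bound for \emph{every} admissible sequence.
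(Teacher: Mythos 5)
Your proof is correct and follows essentially the same route as the paper: both arguments reduce the claim to Lemma \ref{TurbulizationLemma2} and chain it along a sequence of leaves of $\mathcal{F}_\gamma$ nearly realizing $\tilde\rho_\gamma$, using the triangle inequality in $(X,d_X)$. Your version is in fact slightly cleaner, since applying Lemma \ref{TurbulizationLemma2} directly to consecutive leaves gives the exact one-step bound $d_X(\Psi(F_i),\Psi(F_{i+1}))\leq \dist(F_i,F_{i+1})$ and lets you dispense with the $\epsilon$-approximations and the auxiliary points $r_i,q_i$ used in the paper.
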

\begin{proof}
Let $\epsilon>0$, $x,y\in M$. There exists a sequence of leaves $L^{\gamma}_1,\dots, L^{\gamma}_k$ such that $L^{\gamma}_1=L^{\gamma}_x$, $L^{\gamma}_k=L^{\gamma}_y$, and 
\[
\sum_{\nu=1}^{k=1} \dist(L^{\gamma}_{\nu},L^{\gamma}_{\nu+1}) \leq \tilde{\rho}_{\gamma}(\pi_{\gamma}(L^{\gamma}_x), \pi_{\gamma}(L^{\gamma}_x))+\epsilon.
\]
Let $r_1,q_1,\dots,r_{k-1},q_{k-1}\in M$ be such that $r_i\in L^{\gamma}_i$, $q_i\in L^{\gamma}_{i+1}$, and
\[
d(r_i,q_i) \leq \dist(L^{\gamma}_{\nu},L^{\gamma}_{\nu+1})  + \frac{\epsilon}{k}.
\]
Note that $L^{\gamma}_x=L^{\gamma}_{r_1}$, $L^{\gamma}_y=L^{\gamma}_{q_{k-1}}$, and $L^{\gamma}_{r_{i+1}} = L^{\gamma}_{q_i}$. By Lemma \ref{TurbulizationLemma2},
\begin{align*}
d_X(p(\pi(x)),p(\pi(r_1)))=0,\\
 d_X(p(\pi(y)),p(\pi(q_{k-1})))=0,\\
d_X(p(\pi(r_{i+1})),p(\pi(q_i)))=0
\end{align*}
for all $i\in\{1,\dots,k-1\}$. By the construction of $X$,
\begin{align*}
d_X(p(\pi(x)),p(\pi(y)))\leq \\
\sum_{\nu=1}^{k-1} d_X(p(\pi(r_{i})),p(\pi(q_i))) + \sum_{\nu=1}^{k-1} d_X(p(\pi(r_{i+1})),p(\pi(q_i)))\\
+ d_X(p(\pi(x)),p(\pi(r_1))) + d_X(p(\pi(y)),p(\pi(q_{k-1})))\\
\leq \sum_{\nu=1}^{k-1} d(r_i,q_i)
\leq \tilde{\rho}_\gamma(\pi_{\gamma}(L^{\gamma}_x), \pi_{\gamma}(L^{\gamma}_x))+2\epsilon.
\end{align*}
Passing with $\epsilon$ to zero gives us the statement.
\end{proof}

\begin{lemm}\label{TurbulizationLemma4}
For any $x,y\in M$ we have
\[
\tilde{\rho}_\gamma(\pi_{\gamma}(L^{\gamma}_x), \pi_{\gamma}(L^{\gamma}_x))\leq d_X(p(\pi(x)),p(\pi(y))).
\]
\end{lemm}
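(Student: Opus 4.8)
The plan is to prove this as the exact reverse of Lemma \ref{TurbulizationLemma3}, by turning an almost-optimal chain for $d_X$ in $\hls{\mathcal{F}}$ into a chain of leaves of $\mathcal{F}_{\gamma}$ of essentially the same total length. Fix $\epsilon>0$ and set $C=\pi(\gamma([0,1]))\subset\hls{\mathcal{F}}$, the arc that is collapsed to a point when forming $X$. Since $d_X$ is the quotient metric of $\hls{\mathcal{F}}$ under this collapse, the definition recalled in Section 1.2 supplies points $a_1,b_1,\dots,a_s,b_s\in\hls{\mathcal{F}}$ with $a_1=\pi(x)$, $b_s=\pi(y)$, each transition pair $(a_{i+1},b_i)$ lying in $C\times C$ (a ``free jump'' inside the collapsed set), and
\[
\sum_{i=1}^{s}\tilde{\rho}(a_i,b_i)\leq d_X(p(\pi(x)),p(\pi(y)))+\epsilon .
\]
First I would realize each single segment $\tilde{\rho}(a_i,b_i)$ by a finite chain of leaves of $\mathcal{F}$ of total $\dist$-length at most $\tilde{\rho}(a_i,b_i)+\epsilon/s$, straight from the definition of $\hls{\mathcal{F}}$.

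Next I would push each such $\mathcal{F}$-chain forward to a chain of leaves of $\mathcal{F}_{\gamma}$ by applying Lemma \ref{TurbulizationLemma1} to every consecutive pair of leaves. Because $\mathcal{F}$ and $\mathcal{F}_{\gamma}$ coincide outside $N(\gamma)$, the endpoint leaves produced by Lemma \ref{TurbulizationLemma1} agree with $L^{\gamma}$ of the corresponding endpoints away from $N(\gamma)$, and the total $\dist$-length increases by at most a controlled multiple of $\epsilon$. This yields, for each $i$, an $\mathcal{F}_{\gamma}$-chain joining the leaves over $a_i$ and $b_i$ of length at most $\tilde{\rho}(a_i,b_i)+O(\epsilon/s)$.

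It then remains to splice these segment-chains across the free jumps, and here I would invoke the defining feature of turbulization already isolated in Case 2 of the proof of Lemma \ref{TurbulizationLemma1}: any two leaves of $\mathcal{F}_{\gamma}$ meeting $N(\gamma)$ are at $\dist$-distance zero, hence at $\tilde{\rho}_{\gamma}$-distance zero. Since $b_i,a_{i+1}\in C=\pi(\gamma([0,1]))$, their leaves pass through $N(\gamma)$, so the $\mathcal{F}_{\gamma}$-leaves produced at the corresponding junctions also meet $N(\gamma)$, and the transition from the end of one segment-chain to the start of the next costs nothing in $\tilde{\rho}_{\gamma}$. Concatenating all segment-chains with these free transitions gives a single chain of $\mathcal{F}_{\gamma}$-leaves from $L^{\gamma}_x$ to $L^{\gamma}_y$ of total length at most $d_X(p(\pi(x)),p(\pi(y)))+c\,\epsilon$ for an absolute constant $c$; letting $\epsilon\to 0$ yields the claimed inequality $\tilde{\rho}_{\gamma}(\pi_{\gamma}(L^{\gamma}_x),\pi_{\gamma}(L^{\gamma}_y))\leq d_X(p(\pi(x)),p(\pi(y)))$.

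The step I expect to be the main obstacle is the junction bookkeeping. One must check that the endpoint leaves delivered by Lemma \ref{TurbulizationLemma1} are not merely equal to $L^{\gamma}_x$, $L^{\gamma}_y$ and to the collapsed leaves outside $N(\gamma)$, but genuinely connect up as leaves of $\mathcal{F}_{\gamma}$; in particular one must verify that whenever the underlying $\mathcal{F}$-leaf passes through $N(\gamma)$ the associated $\mathcal{F}_{\gamma}$-leaf does too, so that each ``free jump'' is indeed free in $\tilde{\rho}_{\gamma}$ and not only in $\dist$. Distributing the tolerance as $\epsilon/s$ to keep the accumulated errors under control across the a priori unbounded number $s$ of segments is the routine but essential accounting that makes the passage to the limit legitimate.
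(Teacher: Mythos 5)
Your proposal is correct and follows essentially the same route as the paper: unfold the quotient metric $d_X$ into an almost-optimal chain in $\hls{\mathcal{F}}$, realize each segment by a chain of $\mathcal{F}$-leaves, convert to $\mathcal{F}_{\gamma}$-leaves via Lemma \ref{TurbulizationLemma1}, and use that the leaves over the collapsed arc $\pi(\gamma([0,1]))$ meet $N(\gamma)$ so the jumps are free, before letting $\epsilon\to 0$. Your explicit attention to the junction bookkeeping is in fact more careful than the paper's own one-line invocation of Lemma \ref{TurbulizationLemma1} at that step.
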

\begin{proof}
Let $x,y\in M$, $\epsilon>0$. There exist points $r_1,q_1,\dots,r_k,q_k\in \hls{\mathcal{F}}$ such that $p(q_i)=p(r_{i+1})$, $\pi(x)=r_1$, $\pi(x)=q_k$, and
\begin{equation}\label{TurbLemma4Eq1}
\sum_{\nu=1}^{k-1} \tilde{\rho}(r_i,q_i)\leq d_X(p(\pi(x)),p(\pi(y)))+\epsilon.
\end{equation}
For any $i\in\{1,\dots,k\}$ one can find a family of leaves $L_{i,1},\dots,L_{i,\mu_i}$ satisfying $L_{i+1,1}=L_{i,\mu_i}$, $x\in L_{1,1}$, $y\in L_{k,\mu_k}$, and
\begin{equation}\label{TurbLemma4Eq2}
\sum_{\nu=1}^{\mu_i-1} \dist (L_{i,\nu},L_{i,\nu+1}) \leq \tilde{\rho}(r_i,q_i)+\frac{\epsilon}{k}.
\end{equation}
By (\ref{TurbLemma4Eq1}), (\ref{TurbLemma4Eq2}), and Lemma \ref{TurbulizationLemma1}, one can find a finite sequence $L^{\gamma}_1,\dots, L^{\gamma}_m$ of leaves of $\mathcal{F}_{\gamma}$ such that $L^{\gamma}_1=L^{\gamma}_x$, $L^{\gamma}_m=L^{\gamma}_y$ and
\begin{align*}
\tilde{\rho}_\gamma(\pi_{\gamma}(L^{\gamma}_x), \pi_{\gamma}(L^{\gamma}_x))\leq \sum_{\nu=1}^{m-1} \dist(L^{\gamma}_{\nu},L^{\gamma}_{\nu+1})\\
\leq d_X(p(\pi(x)),p(\pi(y))) + 3\epsilon.
\end{align*}
Passing with $\epsilon$ to zero gives us the statement.
\end{proof}

Let $f:X\to\hls{\mathcal{F}_{\gamma}}$ be defined as follows:
\begin{equation}\label{TurbulizationIsometry}
f(p(\pi(x))) = \pi_{\gamma}(L^{\gamma}_x),
\end{equation}
where $\pi_{\gamma}:M\to\hls{\mathcal{F}_{\gamma}}$ denotes the natural projection.

\begin{lemm}\label{TurbulizationLemma5}
$f$ is bijective. 
\end{lemm}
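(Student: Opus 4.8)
The plan is to read off bijectivity directly from the two inequalities established in Lemmas \ref{TurbulizationLemma3} and \ref{TurbulizationLemma4}, which together assert that for all $x,y\in M$
\[
d_X(p(\pi(x)),p(\pi(y))) = \tilde{\rho}_\gamma(\pi_{\gamma}(L^{\gamma}_x), \pi_{\gamma}(L^{\gamma}_y)).
\]
So the first thing I would record is this identity. Before anything else I must check that $f$ is well defined, since it is prescribed through a choice of representative $p(\pi(x))$: I would take $x,y\in M$ with $p(\pi(x))=p(\pi(y))$, so that the left-hand side above vanishes, and invoke Lemma \ref{TurbulizationLemma4} to conclude $\tilde{\rho}_\gamma(\pi_{\gamma}(L^{\gamma}_x), \pi_{\gamma}(L^{\gamma}_y))=0$, i.e. $\pi_{\gamma}(L^{\gamma}_x)=\pi_{\gamma}(L^{\gamma}_y)$, which is exactly $f(p(\pi(x)))=f(p(\pi(y)))$. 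Thus $f$ is a genuine map on the quotient $X$.

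For injectivity I would run the same argument in the opposite direction: if $f(p(\pi(x)))=f(p(\pi(y)))$ then $\pi_{\gamma}(L^{\gamma}_x)=\pi_{\gamma}(L^{\gamma}_y)$, so the right-hand side of the displayed identity is zero, and Lemma \ref{TurbulizationLemma3} forces $d_X(p(\pi(x)),p(\pi(y)))=0$, i.e. $p(\pi(x))=p(\pi(y))$ in the metric space $X$. Hence $f$ is one-to-one; in fact the full identity shows that $f$ is an isometric embedding, which is more than the present lemma requires but will be reused in the proof of Theorem \ref{Turbulization}.

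Surjectivity is the soft part. Every point of $\hls{\mathcal{F}_{\gamma}}$ is the class of some leaf of $\mathcal{F}_{\gamma}$, and every such leaf is $L^{\gamma}_x$ for a suitable $x\in M$, so the map $\pi_{\gamma}\colon M\to\hls{\mathcal{F}_{\gamma}}$ is onto; since every point of $X$ has the form $p(\pi(x))$ (as $p$ and $\pi$ are themselves surjective), the defining formula $f(p(\pi(x)))=\pi_{\gamma}(L^{\gamma}_x)$ gives $f(X)=\hls{\mathcal{F}_{\gamma}}$.

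The only genuine subtlety — and the step I would present first and most carefully — is well-definedness, because $X$ is obtained from $\hls{\mathcal{F}}$ by collapsing $\pi(\gamma([0,1]))$ to a point and $f$ is described through a chosen preimage $x$; this is precisely where Lemma \ref{TurbulizationLemma4} does the work, and I would make sure that both the quotient identification defining $X$ and the passage to classes in $\hls{\mathcal{F}_{\gamma}}$ are respected. No analytic estimate remains: all of that content has already been absorbed into Lemmas \ref{TurbulizationLemma1}--\ref{TurbulizationLemma4}, and the present statement is their purely formal consequence.
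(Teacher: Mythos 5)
Your proof is correct, and it is not circular: Lemmas \ref{TurbulizationLemma3} and \ref{TurbulizationLemma4} are statements about the two quantities $d_X(p(\pi(x)),p(\pi(y)))$ and $\tilde{\rho}_\gamma(\pi_{\gamma}(L^{\gamma}_x),\pi_{\gamma}(L^{\gamma}_y))$ attached to points $x,y\in M$, their proofs nowhere use $f$, and both $d_X$ and $\tilde{\rho}_\gamma$ are genuine metrics (distance zero forces equality in the respective quotients), so your deductions ``$d_X=0\Rightarrow\tilde{\rho}_\gamma=0$'' for well-definedness and ``$\tilde{\rho}_\gamma=0\Rightarrow d_X=0$'' for injectivity go through. (You have also silently corrected the typo in the statements of those lemmas, where the second argument should read $L^{\gamma}_y$.) Your route differs from the paper's: the paper proves well-definedness by a direct case analysis on whether the saturated sets $\pi^{-1}(\pi(x))$ meet $\gamma([0,1])$ (if not, the $\mathcal{F}$- and $\mathcal{F}_\gamma$-classes coincide; if so, everything is sent to the class of the Reeb component), treats surjectivity as immediate, and explicitly leaves injectivity to the reader. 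Your argument buys completeness and economy --- injectivity is no longer an exercise, and all the geometric content is concentrated once and for all in Lemmas \ref{TurbulizationLemma1}--\ref{TurbulizationLemma4}, after which Theorem \ref{Turbulization} is immediate since the same identity shows $f$ is an isometry. What the paper's case analysis buys instead is independence from the metric estimates: it exhibits the set-theoretic reason the identification works (the collapsed set $\pi(\gamma([0,1]))$ corresponds exactly to the class of the Reeb component), which is mildly more informative about the structure of the quotient but logically weaker in that it still owes the reader the injectivity half.
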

\begin{proof}
Suppose that $p(\pi(x)) = p(\pi(y))$, $x,y\in M$. Consider two cases: 

\noindent {\bf Case 1.} $\pi(x)=\pi(y)$. If $\pi^{-1}(\pi(x))\cap \gamma([0,1]) = \emptyset$ then $\pi^{-1}(\pi(y))\cap \gamma([0,1]) = \emptyset$, and $\pi_{\gamma}^{-1}(\pi_{\gamma}(x))=\pi_{\gamma}^{-1}(\pi_{\gamma}(y))$. Hence $\pi_{\gamma}(L^{\gamma}_x) = \pi_{\gamma}(L^{\gamma}_y)$, and $f(p(\pi(x))) = f(p(\pi(y)))$. 

\noindent {\bf Case 2.} If $\pi(x)\neq\pi(y)$, then there exist $\xi_x\in \pi^{-1}(\pi(x))\cap \gamma([0,1])$ and $\xi_y\in \pi^{-1}(\pi(y))\cap \gamma([0,1])$. Hence, $\pi_{\gamma}(\xi_x) = \pi_{\gamma}(\xi_x)=\pi_{\gamma}(R)$, where $R$ denotes the Reeb component. But $\pi_{\gamma}(L^{\gamma}_x) = \pi_{\gamma}(L^{\gamma}_{\xi_x})$. Thus $f(p(\pi(x))) = f(p(\pi(x)))$. 

Finally, $f$ is well defined. By the definition, $f$ is ''onto'' $\hls{\mathcal{F}_{\gamma}}$. Checking that $f$ is one-to-one we leave to the reader.
\end{proof}

Now, we can turn to the proof of Theorem \ref{Turbulization}.

\begin{proof}
By Lemma \ref{TurbulizationLemma5}, $f$ defined in (\ref{TurbulizationIsometry}) is a bijection from $X$ onto $\hls{\mathcal{F}_{\gamma}}$. By Lemmas \ref{TurbulizationLemma3} and \ref{TurbulizationLemma4}, $f$ is an isometry.
\end{proof}

\subsection{Spinning}

Following the definition given in \cite{CC} we recall the notion of {\em spinning} a foliation along a transverse boundary component.

Let $(M,\mathcal{F},g)$ be a compact Riemannian manifold carrying codim-1 foliation transverse to the boundary $\partial M\neq \emptyset$. Let $S$ be a transverse connected component of $\partial M$ with $\partial S=\emptyset$. Assume that $\mathcal{F}|S$ can be defined by a closed non-singular $1$-form $\omega\in A^1(S)$.

\begin{figure}[h]
 \centering
 \includegraphics[scale=1]{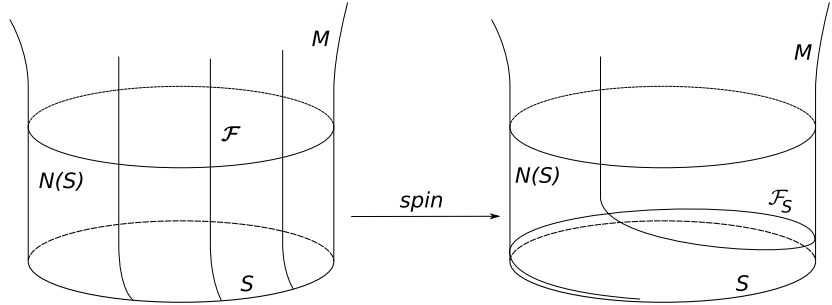} 
 \caption{Spinning along the boundary component.}
 \label{SpinningFoliationFigure1}
\end{figure}

Let $N(S) = S\times [0,1)$ be a foliated collar, i.e. the leaves of $\mathcal{F}|N(S)$ are of the form $L\times [0,1)$, where $L$ is a leaf of $\mathcal{F}|S$.

Decompose $T_{(x,t)}(N(S)) = T_x(S)\oplus T_t([0,1))$. One can write a vector field $\zeta\in X(N(S))$ as
\[
\zeta = fv+g\partial_t,
\]
where $f,g\in C^{\infty}(N(S))$, $v\in X(S)$, and $\partial_t = \frac{\partial}{\partial t}$. $\omega$ extends to a closed non-singular form $\omega_{N(S)}$ by
\[
\omega_{N(S)}(fv+g\partial_t) = f\omega(v).
\]

Let $h:[0,1)\to [0,1]$ be a $C^{\infty}$-function such that $h(t)=0$ for $t\in[\frac{1}{2},1)$, $h(0)=1$, and $h$ is decreasing strictly monotonically on $[0,\frac{1}{2}]$. Moreover, let the derivatives of all orders of $h$ vanish at $t=0$. Set 
\[
\theta = (1-h(t))\omega_{N(S)} + h(t){\rm d}t.
\]
$\theta$ agrees with $\omega_{N(S)}$ on $S\times [\frac{1}{2},1)$ and with ${\rm d}t$ on $S\times \{0\}$. Moreover, $\theta$ is integrable and $S$ becomes a leaf of a new foliation $\mathcal{F}_S$ on $S\times [0,1)$. But $\mathcal{F}$ coincides with $\mathcal{F}_S$ outside the collar $S\times [0,\frac{1}{2})$. Thus, we extend $\mathcal{F}_S$ to a foliation $\mathcal{F}_S$ on $M$ which is tangent to the boundary component $S$.

Now, identify in $\hls{\mathcal{F}}$ the points of $\pi(S)$ and denote the result by $X$. Endow $X$ with the quotient metric denoted by $d_X$.

Before we examine the Hausdorff leaf space for a spinned foliation we formulate technical lemmas. Easy proofs are omitted and left to the reader.

\begin{figure}[h]
$$\xymatrix{
 M \ar[r]^{\pi} \ar[d]^{\pi_S} & \hls{\mathcal{F}}\ar[d]_{\phi}\\
\hls{\mathcal{F}_S} & X
}
$$
\caption{The projections for Theorem \ref{Spinning}.}
\label{SpinningFigure}
\end{figure}

Let $\pi:M\to \hls{\mathcal{F}}$, $\pi_S:M\to \hls{\mathcal{F}_S}$, and $\phi:\hls{\mathcal{F}}\to X$ denote the natural projections (Figure \ref{SpinningFigure}). Denote by $L_z$ ($L^S_z$) a leaf of $\mathcal{F}$ ($\mathcal{F}_S$) passing through a point $z\in M$.

\begin{lemm}\label{SpinLemma1}
For every two points $p,q\in M$ such that $L^S_p=L^S_q$ we have
\[
d_X(\phi(\pi(L_p)),\phi(\pi(L_q)))=0. 
\]
\hfill $\square$
\end{lemm}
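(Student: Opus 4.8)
The plan is to reduce the statement to a single geometric observation about the collar $N(S)=S\times[0,1)$. Since the leaves of $\mathcal{F}|N(S)$ are the products $L\times[0,1)$, \emph{every} $\mathcal{F}$-leaf that meets $N(S)$ already meets the boundary component $S=S\times\{0\}$ as a boundary-leaf, and hence its image under $\pi$ lies in $\pi(S)$. Writing $*$ for the point of $X$ onto which $\phi$ collapses $\pi(S)$, this gives $\phi(\pi(L_z))=*$ for every $z\in N(S)$. Because $d_X$ is a genuine metric on $X$, it suffices to prove that $\phi(\pi(L_p))=\phi(\pi(L_q))$, and for this it is enough to show that either $\pi(L_p)=\pi(L_q)$ in $\hls{\mathcal{F}}$, or that both of these points equal $*$.

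First I would dispose of the case in which the spinned leaf $\Lambda:=L^S_p=L^S_q$ is disjoint from $N(S)$. Outside the collar the form $\theta$ coincides with $\omega_{N(S)}$ (the spinning only alters $\mathcal{F}$ on $S\times[0,\tfrac12)$), so on $M\setminus N(S)$ the foliations $\mathcal{F}_S$ and $\mathcal{F}$ agree; a connected subset of an $\mathcal{F}_S$-leaf lying in this region is therefore contained in a single $\mathcal{F}$-leaf. As $\Lambda$ is connected and $p,q\in\Lambda$, this forces $L_p=L_q$, whence $\phi(\pi(L_p))=\phi(\pi(L_q))$ at once.

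The remaining case is when $\Lambda$ meets $N(S)$, and here I claim both $L_p$ and $L_q$ meet $S$, so that $\phi(\pi(L_p))=*=\phi(\pi(L_q))$ and the distance vanishes. If $p\in N(S)$ this is immediate from the collar observation, and likewise for $q$. If $p\notin N(S)$, let $C$ be the connected component of $\Lambda\setminus N(S)$ containing $p$; since $\Lambda$ is connected and $\Lambda\cap N(S)\neq\emptyset$ while $C\cap N(S)=\emptyset$, the set $C$ cannot be clopen in $\Lambda$, so its closure meets $N(S)$. Choosing $w\in\overline{C}\cap N(S)$, which lies in the region near $\partial N(S)$ where $\mathcal{F}_S=\mathcal{F}$, yields $L_p=L_w$, and $L_w$ meets $S$ by the collar observation; the same argument applies to $q$.

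The one point demanding care, and the natural candidate for the main obstacle, is the interface between the collar and the rest of $M$: I must use the product description of $\mathcal{F}|N(S)$ together with the genuine equality of $\mathcal{F}_S$ and $\mathcal{F}$ on a neighbourhood of $\partial N(S)$ away from the spinning zone, so as to guarantee that the $\mathcal{F}$-leaf through a collar point really reaches $S=S\times\{0\}$ rather than merely accumulating on it. Once this book-keeping is in place the connectedness argument closes the proof with no metric estimate at all, which explains why the spiralling dynamics of the spinning plays no role in this particular lemma.
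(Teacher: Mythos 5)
The paper gives no proof of this lemma (it is among those explicitly ``left to the reader''), so there is nothing to compare against; judged on its own, your argument is essentially correct. The reduction to the set-theoretic equality $\phi(\pi(L_p))=\phi(\pi(L_q))$, the observation that every $\mathcal{F}$-leaf meeting the product collar $N(S)=S\times[0,1)$ already meets $S$ (so its $\pi$-image lies in the collapsed set $\pi(S)$), and the use of the identity $\mathcal{F}_S=\mathcal{F}$ off $S\times[0,\tfrac12)$ to propagate $\mathcal{F}$-leaf equality along the connected $\mathcal{F}_S$-leaf $\Lambda$ are exactly the right ingredients, and no metric estimate is needed. One step should be repaired: since $N(S)$ is open in $M$, the component $C$ of $\Lambda\setminus N(S)$ containing $p$ is \emph{closed} in $\Lambda$, so $\overline{C}\cap N(S)=\emptyset$ and you cannot choose $w\in\overline{C}\cap N(S)$ as written. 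What is true is that $C$, being closed but not open in the connected leaf $\Lambda$, contains a point $w$ that is not interior to $C$; every small connected leaf-disk about $w$ must then meet $\Lambda\cap N(S)$ (otherwise it would be a connected subset of $\Lambda\setminus N(S)$ through $w$, hence contained in $C$), which forces $w$ to lie on the frontier $S\times\{1\}$ of the collar. Since $w$, the component $C$, and the nearby points of $\Lambda\cap N(S)$ all lie in the region $\bigl(M\setminus N(S)\bigr)\cup\bigl(S\times[\tfrac12,1)\bigr)$ where $\mathcal{F}_S=\mathcal{F}$, one obtains $L_p=L_w$ and $L_w\cap N(S)\neq\emptyset$, and your collar observation then yields $\phi(\pi(L_p))=\ast$ as intended. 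With that adjustment (and noting that $S$ itself is transverse to $\mathcal{F}$, not a leaf of it, so ``meets $S$'' rather than ``meets $S$ as a boundary-leaf'' is the accurate phrasing), the proof is complete.
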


\begin{lemm}\label{SpinLemma2}
For any two points $p,q\in M$ such that $L_p=L_q$ we have 
\[
\tilde{\rho}(\pi_S(L^S_p), \pi_S(L^S_q))=0.
\]
\hfill $\square$
\end{lemm}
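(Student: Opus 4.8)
The plan is to reduce the claim to a single geometric assertion about the spun foliation near $S$ and then to close by the triangle inequality for $\tilde\rho$, which here denotes the HLS pseudometric attached to $\mathcal{F}_S$ (since $\pi_S$ projects onto $\hls{\mathcal{F}_S}$). Throughout I will use that, after spinning, the boundary component $S$ is itself a leaf of $\mathcal{F}_S$, so that $\pi_S(S)$ is a well-defined point of $\hls{\mathcal{F}_S}$.

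First I would split into two cases according to whether the common $\mathcal{F}$-leaf $L=L_p=L_q$ meets $S$. If $L\cap S=\emptyset$, then, because $\mathcal{F}|N(S)$ is the product foliation whose leaves are the vertical cylinders $(L\cap S)\times[0,1)$, the leaf $L$ is disjoint from the whole collar $N(S)$; since $\mathcal{F}$ and $\mathcal{F}_S$ coincide off the collar $S\times[0,\tfrac12)$, the two foliations agree on a neighbourhood of $L$, whence $L^S_p=L^S_q=L$ and $\tilde\rho(\pi_S(L^S_p),\pi_S(L^S_q))=0$ trivially. It therefore remains to treat the case $L\cap S\neq\emptyset$, in which, by the product structure of the collar, $L$ descends through the collar and reaches $S$ at $t=0$.

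The key step is to show that for any $z\in L$ the spun leaf $L^S_z$ satisfies $S\subseteq{\rm cl}(L^S_z)$, and this is where the precise form of the construction enters. Examining the integral curves of the distribution $\ker\theta$ with $\theta=(1-h(t))\omega_{N(S)}+h(t)\,{\rm d}t$ near $t=0$, one sees that the ratio governing the ${\rm d}t$-direction against the $\omega_{N(S)}$-direction degenerates as $t\to0^{+}$: indeed $h(0)=1$ while $h$ decreases strictly to $0$ on $[0,\tfrac12]$ and has all derivatives flat at $t=0$, so a leaf of $\mathcal{F}_S$ winds along the $S$-direction while $t\to0$ and accumulates on $S\times\{0\}=S$. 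Hence the connected leaf $L^S_z$, which follows $L$ outside the collar and enters the modified region exactly where $L$ runs down to $S$, has $S$ in its closure. I expect this spiralling/accumulation statement to be the main obstacle of the whole argument: one must verify from the ODE for $\ker\theta$ together with the flatness of $h$ at $0$ that the leaf genuinely limits onto all of $S$, rather than onto a proper subset, this being the spinning analogue of the Reeb-type accumulation exploited implicitly in Lemma \ref{TurbulizationLemma1}.

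Finally I would invoke the closure characterization of the HLS for codim-1 foliations recorded in the second Remark of Section \ref{Preliminaries-HLS}: since $S\subseteq{\rm cl}(L^S_z)$, the leaf $S$ is related to $L^S_z$, so $\dist(L^S_z,S)=0$ and therefore $\tilde\rho(\pi_S(L^S_z),\pi_S(S))=0$. Applying this to $z=p$ and to $z=q$ and using the triangle inequality yields
\[
\tilde\rho(\pi_S(L^S_p),\pi_S(L^S_q))\leq \tilde\rho(\pi_S(L^S_p),\pi_S(S))+\tilde\rho(\pi_S(S),\pi_S(L^S_q))=0,
\]
which is the desired equality.
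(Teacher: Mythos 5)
The paper offers no proof to compare against: the spinning lemmas are explicitly ``omitted and left to the reader,'' so your argument has to stand on its own. Its overall strategy is sound and is surely the intended one: dispose of the leaves missing the collar (where $\mathcal{F}$ and $\mathcal{F}_S$ agree, so $L^S_p=L^S_q$), and for a leaf $L$ meeting $S$ route both $L^S_p$ and $L^S_q$ through the new compact leaf $S$ and apply the triangle inequality for $\tilde{\rho}_S$. Your Case 1 reduction is also correctly justified by the product structure of the foliated collar, under which every leaf of $\mathcal{F}|N(S)$ touches $S$.

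The one reservation is that you make the key step much heavier than it needs to be, and then leave precisely that heavy version unproved. You reduce everything to $S\subseteq{\rm cl}(L^S_z)$ and flag the verification that the spiral accumulates on \emph{all} of $S$ as ``the main obstacle.'' But the definition of $\rho$ only requires $\dist(L^S_z,S)=0$: the chain $L^S_p,\,S,\,L^S_q$ gives $\rho_S(L^S_p,L^S_q)\leq \dist(L^S_p,S)+\dist(S,L^S_q)$ directly, with no appeal to the closure characterization from the Remark in Section \ref{Preliminaries-HLS}. And $\dist(L^S_z,S)=0$ is immediate from the ODE analysis you already sketch: along the spiral $t$ is strictly decreasing, and since $1-h$ vanishes to infinite order at $t=0$ the integral $\int_0 h/(1-h)\,{\rm d}t$ diverges, so $t\to 0$ without the leaf ever reaching $S$; hence $L^S_z$ contains points at distance less than any $\varepsilon$ from $S$. (If you do want the stronger statement $S\subseteq{\rm cl}(L^S_z)$, it is true, but it requires the dichotomy for foliations defined by closed non-singular $1$-forms on a compact manifold --- either a fibration over $S^1$ or all leaves dense --- so that the swept family of leaves of $\mathcal{F}|S$ is dense in $S$; none of this is needed here.) With that substitution your proof closes with no gap.
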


\begin{lemm}\label{SpinLemma3}
For any two points $x,y\in M$ we have
\[
d_X(\phi(\pi(L_x)),\phi(\pi(L_y))) = \tilde{\rho}_S(\pi_S(x),\pi_S(y)).
\]
\hfill $\square$
\end{lemm}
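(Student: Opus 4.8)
The plan is to prove the two inequalities separately, each driven by one of the preparatory lemmas: Lemma \ref{SpinLemma1} yields $d_X \leq \tilde{\rho}_S$ and Lemma \ref{SpinLemma2} yields the reverse, in the same spirit as Lemmas \ref{TurbulizationLemma3} and \ref{TurbulizationLemma4} for turbulization. Throughout I use three routine facts: $\phi$ is $1$-Lipschitz (a quotient metric never exceeds the metric it is built from), so $d_X(\phi(a),\phi(b)) \leq \tilde{\rho}(a,b)$; the single-link bound $\tilde{\rho}_S(\pi_S(u),\pi_S(v)) \leq \dist(L^S_u,L^S_v) \leq d(u,v)$; and the analogous bound for $\tilde{\rho}$.

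For $d_X(\phi(\pi(L_x)),\phi(\pi(L_y))) \leq \tilde{\rho}_S(\pi_S(x),\pi_S(y))$, fix $\epsilon>0$ and pick an $\mathcal{F}_S$-chain $L^S_1,\dots,L^S_m$ from $L^S_x$ to $L^S_y$ whose total leaf-length is within $\epsilon$ of $\tilde{\rho}_S(\pi_S(x),\pi_S(y))$, together with points $u_\nu\in L^S_\nu$, $w_\nu\in L^S_{\nu+1}$ nearly realising each $\dist(L^S_\nu,L^S_{\nu+1})$. Since $w_\nu$ and $u_{\nu+1}$ lie on a common $\mathcal{F}_S$-leaf, and since $u_1\in L^S_x$, $w_{m-1}\in L^S_y$, Lemma \ref{SpinLemma1} makes all junction and endpoint terms vanish, while each link term is controlled by $d_X(\phi(\pi(L_{u_\nu})),\phi(\pi(L_{w_\nu}))) \leq \tilde{\rho}(\pi(L_{u_\nu}),\pi(L_{w_\nu})) \leq d(u_\nu,w_\nu)$. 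Summing and letting $\epsilon\to 0$ gives this inequality.

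For the reverse, I would first record the auxiliary estimate $\tilde{\rho}_S(\pi_S(p),\pi_S(q)) \leq \tilde{\rho}(\pi(L_p),\pi(L_q))$ for all $p,q\in M$: take a near-optimal $\mathcal{F}$-chain from $L_p$ to $L_q$, refine it to points, bound each link by $\tilde{\rho}_S\leq d$, and use Lemma \ref{SpinLemma2} to kill the junctions where consecutive points share an $\mathcal{F}$-leaf. Then I expand $d_X(\phi(\pi(L_x)),\phi(\pi(L_y)))$ through a near-optimal quotient chain $r_1,q_1,\dots,r_k,q_k$ in $\hls{\mathcal{F}}$ with $\phi(q_i)=\phi(r_{i+1})$, choosing representatives $a_i,b_i\in M$ of $r_i,q_i$ with $a_1=x$ and $b_k=y$. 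The auxiliary estimate bounds each $\tilde{\rho}_S(\pi_S(a_i),\pi_S(b_i))$ by $\tilde{\rho}(r_i,q_i)$, and telescoping through $a_1,b_1,a_2,\dots,a_k,b_k$ will finish the proof once the junction terms $\tilde{\rho}_S(\pi_S(b_i),\pi_S(a_{i+1}))$ are shown to vanish.

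The main obstacle is precisely these junctions. Because $\phi$ is injective off the collapsed set, $\phi(q_i)=\phi(r_{i+1})$ forces either $q_i=r_{i+1}$, in which case one takes $a_{i+1}=b_i$ and the term is literally zero, or $q_i,r_{i+1}\in\pi(S)$. The trick in the second case is to choose the representatives $b_i,a_{i+1}$ on $S$ itself; since the spinning construction makes $S$ a single leaf of $\mathcal{F}_S$ (the form $\theta$ restricts to $\mathrm{d}t$ on $S\times\{0\}$), both representatives then lie on that one leaf, so $\pi_S(b_i)=\pi_S(a_{i+1})$ and the junction term vanishes. This reduction of the collapsed-point junctions to the single spun leaf $S$ is where the geometry of spinning enters, and it is the step that requires the most care; the rest is a telescoping estimate followed by $\epsilon\to 0$.
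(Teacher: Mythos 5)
The paper explicitly omits the proof of Lemma \ref{SpinLemma3} ("easy proofs are omitted and left to the reader"), and your argument correctly supplies it in exactly the style the paper uses for the analogous statements (Lemma \ref{TangentialGluingLemma} and Lemmas \ref{TurbulizationLemma3}--\ref{TurbulizationLemma4}): two inequalities, each obtained by passing a near-optimal chain from one space to the other, with Lemma \ref{SpinLemma1} killing the junctions in one direction and Lemma \ref{SpinLemma2} (plus the choice of representatives on the single spun leaf $S$ for the collapsed identifications) killing them in the other. I see no gap; this is essentially the paper's intended proof.
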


\begin{theo}\label{Spinning}
$\hls{\mathcal{F}}$ coincides with $(X,d_X)$.
\end{theo}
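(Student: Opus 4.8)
The plan is to mimic the proof of Theorem \ref{Turbulization}: I would exhibit an explicit bijective isometry between the Hausdorff leaf space of the spun foliation $\hls{\mathcal{F}_S}$ and $(X,d_X)$, and then appeal to Theorem \ref{GromovDistanceTheorem}. All three auxiliary lemmas \ref{SpinLemma1}, \ref{SpinLemma2} and \ref{SpinLemma3} are already at hand, and Lemma \ref{SpinLemma3} carries essentially the entire argument.

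First I would define a map $f\colon X\to\hls{\mathcal{F}_S}$ by $f(\phi(\pi(L_x)))=\pi_S(L^S_x)$ for $x\in M$. Since $\phi\circ\pi\colon M\to X$ is onto, every point of $X$ has this form, so $f$ is defined everywhere on $X$; the only thing to verify is that its value is independent of the chosen representative. This is immediate from Lemma \ref{SpinLemma3}: if $\phi(\pi(L_x))=\phi(\pi(L_y))$ then $d_X(\phi(\pi(L_x)),\phi(\pi(L_y)))=0$, whence $\tilde\rho_S(\pi_S(x),\pi_S(y))=0$ and $\pi_S(L^S_x)=\pi_S(L^S_y)$.

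Next I would check that $f$ is a bijective isometry. The isometry property is exactly the equality in Lemma \ref{SpinLemma3}, namely $d_X(\phi(\pi(L_x)),\phi(\pi(L_y)))=\tilde\rho_S(\pi_S(x),\pi_S(y))$, read as a statement about $f$. Surjectivity follows because $\pi_S$ is onto $\hls{\mathcal{F}_S}$ and each $\pi_S(L^S_x)$ equals $f(\phi(\pi(L_x)))$. Injectivity is then automatic, a distance-preserving map into a metric space being injective: $f(\phi(\pi(L_x)))=f(\phi(\pi(L_y)))$ gives $\tilde\rho_S(\pi_S(x),\pi_S(y))=0$, hence $d_X(\phi(\pi(L_x)),\phi(\pi(L_y)))=0$ by Lemma \ref{SpinLemma3}, so the two points of $X$ agree. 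Therefore $f$ is a bijective isometry and $\hls{\mathcal{F}_S}$ coincides with $(X,d_X)$.

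Granting the three lemmas, the theorem is thus essentially formal; the real work is hidden in Lemma \ref{SpinLemma3}, and that is where I expect the main obstacle to lie. Its easy direction $d_X\leq\tilde\rho_S$ relies on Lemma \ref{SpinLemma1} (spinning collapses every leaf meeting $S$ to a single point of $X$) together with the usual chaining over a near-optimal sequence. The harder direction $\tilde\rho_S\leq d_X$ would require---parallel to Lemmas \ref{TurbulizationLemma1} and \ref{TurbulizationLemma4}, and invoking Lemma \ref{SpinLemma2}---replacing a near-optimal chain of $\mathcal{F}$-leaves joining $L_x$ to $L_y$ by a chain of $\mathcal{F}_S$-leaves of comparable total length, exploiting that $\mathcal{F}$ and $\mathcal{F}_S$ agree outside the collar $S\times[0,\tfrac12)$ so that leaf segments away from $S$ transfer verbatim, while segments near $S$ are absorbed at no cost by the collapsed boundary leaf. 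Once this estimate is secured, no further computation is needed.
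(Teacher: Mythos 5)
Your proof is correct, but it takes a different route from the paper's. The paper proves Theorem \ref{Spinning} in the style of Theorem \ref{TangentialGluing}: it introduces constant warping functions $f_n=\tfrac1n$, selects an $\epsilon/2$-net $A'$ on $M$, prunes it to points with distinct $\pi$-images while collapsing all net points in $\pi^{-1}(\pi(S))$ to a single representative, and then applies Lemma \ref{SpinLemma3} together with the Gromov Lemma \ref{GromovLemma} to conclude $d_{GH}(X,\hls{\mathcal{F}_S})=0$, finishing with Theorem \ref{GromovDistanceTheorem}. You instead follow the pattern of Theorem \ref{Turbulization} and build the explicit bijection $f(\phi(\pi(L_x)))=\pi_S(L^S_x)$, reading Lemma \ref{SpinLemma3} directly as the isometry property. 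Both arguments rest entirely on Lemma \ref{SpinLemma3}; since that lemma already asserts an exact equality of distances for \emph{all} pairs of points (not merely an approximate one on a net), your approach is the more economical of the two --- the $\epsilon$-net and Gromov--Hausdorff machinery in the paper's proof is not really needed once the lemma is granted, whereas it is essential in Theorem \ref{TangentialGluing} where only approximate control is available. Your closing remarks correctly locate the genuine content in Lemma \ref{SpinLemma3} (which the paper leaves to the reader), and your sketch of its two inequalities --- chaining via Lemma \ref{SpinLemma1} for $d_X\leq\tilde\rho_S$, and transferring chains of $\mathcal{F}$-leaves to $\mathcal{F}_S$-leaves using Lemma \ref{SpinLemma2} and the agreement of the two foliations outside the collar for the converse --- is the right outline.
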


\begin{figure}[h]
 \centering
 \includegraphics[scale=1]{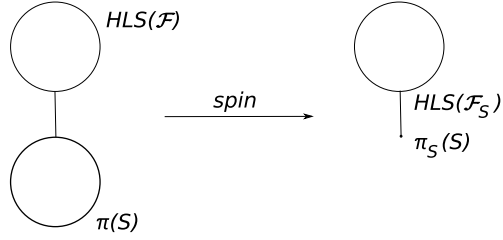} 
 \caption{HLS of a foliation spinned along the boundary component $S$.}
 \label{SpinningFoliationFigure2}
\end{figure}

\begin{proof}
Let $f_n = \frac{1}{n}$ be a constant function on $M$. Let $A'=\{x_1,\dots, x_{k'}\}\subset M$ be an $\epsilon/2$-net on $M$. One can select from a subset $A=\{x_1,\dots, x_k\}\subset A'$ and $N\in\mathbb{N}$ such that $\pi(x_i)\neq \pi(x_j)$ ($i\neq j$) and $A$ an $\epsilon$-net on $M_{\frac{1}{n}}=(M,\mathcal{F},g_{\frac{1}{n}})$ for all $n>N$. We may assume that the points $x_{k-l},\dots,x_k$ are the only ones that belong to $\pi^{-1}(\pi(S))$. Now, pick from the points $x_{k-l},\dots,x_k$ exactly one, let say $x_{k-l}$. 

Observe that $\pi_S(\{x_{k-l},\dots,x_k\})$ is a single point in $\hls{\mathcal{F}_S}$. Hence, there exists $N'$ such that $\{x_1,\dots,x_{k-l}\}$ is an $\epsilon$-net on $M^S_{\frac{1}{n}}=(M,\mathcal{F}_S,g_{\frac{1}{n}})$ for all $n>N'$. Moreover, since $x_{k-l},\dots,x_k\in \pi^{-1}(\pi(S))$, 
\[
 \phi(\pi(x_{\mu})) = \phi(\pi(x_{\nu})),\quad \mu,\nu\in\{k-l,\dots,k\}.
\]
Set $\zeta_i=\phi(\pi(x_i))$, $\xi_j=\pi_S(x_j)$ ($i,j=1,\dots,k-l$). By the construction and Lemma \ref{ConvergenceTheoremLemma1}, the sets $\{\zeta_i\}$ and $\{\xi_j\}$ are $2\epsilon$-nets on $X$ and $\hls{\mathcal{F}_S}$, respectively. By Lemma \ref{SpinLemma3},
\[
d_X(\zeta_i,\zeta_j)=\tilde{\rho}_S(\xi_i,\xi_j), \quad \textrm{for all $i,j\in\{1,\dots,k\}$}.
\]
By Lemma \ref{GromovLemma}, $d_{GH}(X,\hls{\mathcal{F}_S})=0$, and by Theorem \ref{GromovDistanceTheorem}, $X$ is isometric to $\hls{\mathcal{F_S}}$.
\end{proof}

\subsection{Suspension}

Denote by $B$ a smooth connected manifold, and by $p:\tilde B\to B$ the universal covering of $B$. Let $x_0\in B$. Recall that the covering transformation group $\Gamma$ acts from the right on $\tilde B$ and hence $\Gamma\subset {\rm Diff} (\tilde B)$. Let $F$ be a $q$-dimensional manifold. Consider a group homomorphism $h:\Gamma\to{\rm Diff}(F)$. Then $\Gamma$ acts on $\tilde B\times F$ by
\[
\gamma(x,z) = (\gamma(x), h(\gamma)(z)), \quad (x\in \tilde B, z\in F).
\]
Consider a foliation $\tilde F = \{\tilde B\times \{z\} , z\in F\}$. Using canonical projection one can project $\tilde F$ onto a foliation $\mathcal{F}$ of $M=(\tilde B\times F)/\Gamma$. The foliation $\mathcal{F}$ is called the {\em suspension} of the homomorphism $h$. One can check that $M$ is a fibre bundle over $B$, and $F$ coincides with its fibre.

Analogically as in Section \ref{Preliminaries-HLS}, one can define the {\it Hausdorff orbit space}:

Let $G$ be a group acting on a metric space $(X,d_X)$. Denote by $\mathcal{O}$ the space of orbits of $G$-action. Set
\begin{equation*}
\rho(G(x),G(y)) = \inf \{\sum_{i=1}^{n-1} d_X (G_1,G_{i+1})\},
\end{equation*}
where the infimum is taken over all finite sequences of orbits beginning at $G_1=G(x)$ and ending at $L_n=G(y)$, and $G(z)$ denotes the orbit of $z\in X$. Define an equivalence relation $\sim$ in $\mathcal{O}$  by:
\[
G(x)\sim G(y) \Leftrightarrow \rho(G(x),G(y))=0,\quad x,y\in X.
\]
Let $\tilde{\mathcal{O}} = \mathcal{O}/_{\sim}$. Put 
\[
\tilde\rho([G(x)],[G(y)]) = \rho(G(x),G(y)), 
\]
where $[G(x)],[G(y)]\in\tilde{\mathcal{O}}$. $(\tilde{\mathcal{O}},\tilde\rho)$ is a metric space. We call it the {\em Hausdorff orbit space} of the $G$-action, and we denote it by $\HOS{X/G}$.

\begin{theo}
$\hls{\mathcal{F}}$ is homeomorphic to $\HOS{F/h(\Gamma)}$.
\end{theo}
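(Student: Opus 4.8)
The plan is to exhibit a natural bijection between the leaves of $\mathcal{F}$ and the $h(\Gamma)$-orbits in $F$, promote it to a map $\Psi:\hls{\mathcal{F}}\to\HOS{F/h(\Gamma)}$, and prove $\Psi$ is a homeomorphism by checking it is a continuous bijection between compact spaces. First I would record two facts that let a continuity argument suffice. The canonical projection $\tilde B\times F\to M$ sends the leaf $\tilde B\times\{z\}$ of $\tilde{\mathcal F}$ to the same leaf of $\mathcal F$ as $\tilde B\times\{z'\}$ precisely when $z'=h(\gamma)z$ for some $\gamma\in\Gamma$; hence $z\mapsto h(\Gamma)(z)$ induces a bijection between leaves of $\mathcal F$ and $h(\Gamma)$-orbits in $F$, which descends to a set-bijection $\Psi$ of the two Hausdorff quotients. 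Moreover, since $M$ is a compact fibre bundle over $B$ with fibre $F$, the fibre $F$ is compact, so both $\hls{\mathcal F}$ and $\HOS{F/h(\Gamma)}$ are compact metric spaces. Because a continuous bijection between compact Hausdorff spaces is automatically a homeomorphism, it is enough to prove that $\Psi$ and $\Psi^{-1}$ are continuous.

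Next I would reduce everything to transverse data. By Lemma \ref{ConvergenceTheoremLemma1} the pseudometric $\tilde\rho$ defining $\hls{\mathcal F}$ depends only on the restriction of $g$ to the orthogonal bundle $\mathcal F^{\perp}$, so I am free to choose $g$ adapted to the suspension. Concretely, I would fix the metric $d_X=d_F$ used to define the HOS, pull a metric back from $B$ along the bundle projection to handle the leafwise directions, and arrange the transverse part of $g$ so that inside each foliated flow box $U\cong V\times T$ the transversal $T$ is identified isometrically with an open subset of $(F,d_F)$. The essential feature of the suspension is that the transverse holonomy pseudogroup of $\mathcal F$ is generated exactly by the maps $h(\gamma)$, $\gamma\in\Gamma$: following a plaque-to-plaque chain of flow boxes around a loop in $B$ realises precisely the transverse map $h(\gamma)$ on $F$. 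Thus the transverse geometry of $\mathcal F$ is, locally and up to these holonomy identifications, the geometry of $(F,d_F)$, with the orbit structure of $h(\Gamma)$ built in.

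The heart of the proof is the comparison of the two pseudometrics through these identifications, and this is where the main difficulty lies. In one direction, given $z,z'\in F$ whose orbits are $d_F$-close, I would lift a short $F$-geodesic between representatives to a transverse curve running through a chain of flow boxes of $M$, producing a chain of leaves whose $\dist$-steps are bounded in terms of the $F$-steps; summing over an orbit chain that realises the HOS-pseudometric gives an upper bound on $\tilde\rho$ tending to $0$ with the HOS-distance, i.e.\ continuity of $\Psi^{-1}$. Conversely, I would split a short path in $M$ realising $\dist(L,L')$ into its $\mathcal F$-tangent and $\mathcal F$-orthogonal components: the tangential part leaves the orbit unchanged, while the orthogonal part moves the base point within $F$ (after the holonomy identifications) by at most its transverse length, so a leaf chain realising $\rho(L,L')$ projects to an orbit chain with comparably small $F$-steps, giving continuity of $\Psi$. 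The delicate point, requiring real care, is making these estimates \emph{uniform} over the compact manifold while correctly bookkeeping the holonomy: a path may wind around $B$ and so traverse transversals identified with $F$ only after applying various $h(\gamma)$, and one must verify that the net transverse displacement it records is exactly an orbit displacement measured by the infimum $\inf_{\gamma}d_F(h(\gamma)z,z')$ entering the HOS pseudometric. Uniformity follows from compactness of $M$ (finitely many flow boxes of bounded geometry); after it is established, passing $\epsilon\to 0$ in both inequalities yields continuity of $\Psi$ and $\Psi^{-1}$, and compactness of the two spaces then completes the proof.
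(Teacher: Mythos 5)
Your proposal is correct and follows essentially the same route as the paper, which simply asserts that the canonical correspondence $\tilde B\times\{z\}\mapsto h(\Gamma)(z)$ between leaves of $\mathcal{F}$ and $h(\Gamma)$-orbits induces a homeomorphism of the two Hausdorff quotients. You supply the details the paper omits (the pseudometric comparison showing the bijection descends to the quotients and is bicontinuous), and these details are sound; note only that, as you observe, continuity of $\Psi$ alone would already suffice once compactness of both spaces is in hand.
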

\begin{proof}
By the construction of suspension, there exists a homeomorphism between the space of leaves of $\mathcal{F}$ and the space of orbits of $h(\Gamma)$. It induces a homeomorphism between $\hls{\mathcal{F}}$ and $\HOS{F/h(\Gamma)}$.
\end{proof}

\section{Main results - HLS for codim-1 foliations}\label{MainResults}

\subsection{HLS for compact I-bundles}

Let $(M,\mathcal{F},{\rm pr})$ be a foliated $I$-bundle, $I=[0,1]$. Note that there are at most two boundary leaves. Let us denote by $L_0$ the boundary leaf passing through the points $0\in I$ of every fiber. Consider the function $d:\mathcal{L}\to [0,1]$ ($\mathcal{L}$ denotes here the space of leaves of the foliation $\mathcal{F}$) defined by $d(L)=\tilde{\rho}(L_0,L)$, where $\tilde{\rho}$ denotes the metric in $\hls{\mathcal{F}}$. Let $\pi:M\to \hls{\mathcal{F}}$ again be the natural projection.

\begin{lemm}\label{Codim-1-lm0}
For any two leaves $L\neq L'$ such that $\pi(L)\neq \pi(L')$ we have $d(L)\neq d(L')$.
\end{lemm}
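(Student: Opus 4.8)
The plan is to prove the contrapositive: if $d(L)=d(L')$ then $\pi(L)=\pi(L')$, i.e. $\tilde{\rho}(L,L')=0$. The organizing idea is that, for a foliated $I$-bundle, the fibers coorient $\mathcal{F}$ and thereby linearly order the leaves, and I want to show that $d=\tilde{\rho}(L_0,\cdot)$ is essentially the arc-length transverse coordinate for this order, so that it separates HLS-distinct leaves. Since $M$ is compact, the metric $g$ is two-sidedly comparable to a local product metric adapted to the bundle, so there are constants $0<c\le C$ such that the $\mathcal{F}^{\perp}$-length of any arc bounds the total variation of its fiber coordinate $t$ both from above and from below. By Lemma \ref{ConvergenceTheoremLemma1} only the restriction of $g$ to $\mathcal{F}^{\perp}$ enters $\tilde{\rho}$, and by Theorem \ref{ConvergenceTheorem} one may reason in the warped picture, where motion inside a single leaf is free and only transverse displacement is charged.

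First I would show that $d(L)$ is computed by \emph{monotone} transversals. Given $\epsilon>0$, take a chain $L_0=F_0,\dots,F_m=L$ with $\sum_i \dist(F_i,F_{i+1})\le d(L)+\epsilon$ and realize each step by a short geodesic segment joining $F_i$ to $F_{i+1}$; first variation forces these segments to meet the leaves orthogonally, and the coorientation records for each segment the sign of its $t$-increment. The claim to establish is that any two consecutive segments of opposite sign can be replaced, at no greater transverse cost, by segments that do not backtrack in $t$: one slides the turning point freely along its leaf (costless in the warped limit) and uses transversality to the fibers to straighten. Iterating, $d(L)$ equals the infimal transverse length of a path from $L_0$ to $L$ whose fiber coordinate is monotone nondecreasing, and by the two-sided comparison such a path exists with length pinched between $c$ and $C$ times the net climb in $t$. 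Consequently every leaf in a level set $\{d=r\}$ is reached by a monotone transversal of the same length $r$.

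Finally, if $d(L)=d(L')=r$, I would concatenate a monotone realizing path from $L_0$ to $L$ with the reverse of one from $L_0$ to $L'$; after cancelling the common monotone ascent this produces a connection between $L$ and $L'$ that can be confined to the single transverse level $r$, where in the warped picture it has arbitrarily small transverse length. Hence $\tilde{\rho}(L,L')\le 2\epsilon$ for every $\epsilon$, so $\tilde{\rho}(L,L')=0$ and $\pi(L)=\pi(L')$, which is the assertion. The main obstacle is precisely the straightening step: removing transverse backtracking from a near-minimizing chain without increasing its length, in the presence of nontrivial holonomy of $\mathcal{F}$ along the base. This is where the geometry must be used carefully — one needs that minimizing leaf-to-leaf geodesics remain orthogonal to the leaves, that sliding their endpoints along leaves is free in the Hausdorff leaf space, and that compactness supplies the uniform transverse constants $c,C$ controlling the error terms.
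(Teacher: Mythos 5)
Your route is genuinely different from the paper's, and it breaks down at the decisive step. The paper argues directly: since $\pi(L)\neq\pi(L')$, the closed saturated set $\pi^{-1}(\pi(L))$ separates the $I$-bundle, so (after possibly exchanging the roles of $L$ and $L'$) every chain of leaves from $L_0$ to $L'$ whose total length is within $\epsilon$ of $d(L')$ may be assumed to pass through a leaf in the class of $L$; splitting the chain there gives the additivity $d(L)+\tilde{\rho}(\pi(L),\pi(L'))\le d(L')$, and since $\tilde{\rho}(\pi(L),\pi(L'))>0$ this yields $d(L)<d(L')$ in one line. You instead prove the contrapositive via monotone transversals, and the fatal step is the last one: from $d(L)=d(L')=r$ you take two monotone realizing paths issuing from $L_0$, one ending on $L$ and one on $L'$, and claim that ``cancelling the common monotone ascent'' leaves a connection between $L$ and $L'$ ``confined to the single transverse level $r$'' of arbitrarily small transverse cost. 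But the two ascents run along different transversals, over different base points, through a priori different sequences of leaves; there is nothing common to cancel, and the existence of a well-defined ``level $r$'' containing both $L$ and $L'$ inside which they can be joined at zero transverse cost is exactly the assertion of the lemma. As written, this step is circular. The missing ingredient is precisely the separation/ordering input the paper uses: one of the two classes $\pi^{-1}(\pi(L))$, $\pi^{-1}(\pi(L'))$ separates $L_0$ from the other leaf, which forces any near-optimal chain to the farther leaf to cross the nearer class and makes $d$ strictly increasing along the transverse order.

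Two secondary remarks. First, the appeal to Theorem \ref{ConvergenceTheorem} and the ``warped picture'' is unnecessary: motion inside a leaf is already free in the definition of $\rho$, because each term $\dist(L_i,L_{i+1})$ is the distance between the leaves as sets, so nothing is gained by shrinking the leafwise metric. Second, the straightening step you yourself flag as the main obstacle (removing transverse backtracking from a near-minimizing chain at no extra cost, in the presence of holonomy and of leaves meeting a fiber in interleaved orbits) is indeed nontrivial and is not carried out; but even if it were granted in full, the argument would still not close, for the reason above.
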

\begin{proof}
Since $\pi(L)\neq \pi(L')$ then $\tilde{\rho}(\pi(L),\pi(L'))>0$. Let $\epsilon>0$, and let $L_1,\dots,L_k$ be a family of leaves such that $L_k=L'$, and 
\[
\sum_{\nu=0}^{k-1} \dist(L_{\nu},L_{\nu+1})< \tilde{\rho}(L_0,L').
\]
Without losing generality we can assume that there exists $j\in\{0,\dots,k-1\}$ satisfying $L_j=L$ (if not then rename the leaf $L$ to $L'$ and $L'$ to $L$). Then
\begin{align*}
\tilde{\rho}(L_0,L) + \tilde{\rho}(L,L') \leq \sum_{\nu=0}^{j-1} \dist(L_{\nu},L_{\nu+1}) + \sum_{\nu=j}^{k-1} \dist(L_{\nu},L_{\nu+1})\\
< \tilde{\rho}(L_0,L') + \epsilon.
\end{align*}
Hence,
\[
d(L)+\tilde{\rho}(L,L') \leq d(L').
\]
By the triangle inequality and the above, we obtain
\[
d(L)+\tilde{\rho}(L,L') = d(L').
\]
But $\tilde{\rho}(L,L')>0$. Hence, $d(L)<d(L')$. This completes the proof.
\end{proof}

\begin{theo}\label{Codim-1-lm1}
Let $(M,\mathcal{F},{\rm pr})$ be a foliated $I$-bundle. $\hls{\mathcal{F}}$ is isometric to a metric segment.
\end{theo}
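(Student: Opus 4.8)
The plan is to show that the function $d(L)=\tilde\rho(L_0,L)$, which by construction depends only on the class $\pi(L)$ and hence descends to a continuous function $\bar d:\hls{\mathcal{F}}\to[0,1]$ on the leaf space, is an \emph{isometric embedding} whose image is a closed interval. First I would record the two soft properties of $\bar d$. It is $1$-Lipschitz: by the triangle inequality $|\tilde\rho(L_0,L)-\tilde\rho(L_0,L')|\le\tilde\rho(L,L')$, so $|\bar d(\pi(L))-\bar d(\pi(L'))|\le\tilde\rho(\pi(L),\pi(L'))$, and in particular $\bar d$ is continuous. It is injective by Lemma~\ref{Codim-1-lm0}. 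Moreover, since $\hls{\mathcal{F}}$ is the continuous image of the compact connected manifold $M$ under $\pi$, it is compact and connected, and by the length-space lemma of Section~\ref{Preliminaries-HLS} together with completeness it is a geodesic space (Hopf--Rinow): any two points are joined by a shortest path realizing $\tilde\rho$.

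The core of the argument is the reverse inequality $\tilde\rho(\pi(L),\pi(L'))\le|\bar d(\pi(L))-\bar d(\pi(L'))|$, and this is where the $I$-bundle structure enters, through the injectivity of $\bar d$. Fix $x=\pi(L)$, $y=\pi(L')$ with $\bar d(x)\le\bar d(y)$, and let $\sigma:[0,\ell]\to\hls{\mathcal{F}}$ be a shortest path from $\pi(L_0)$ to $y$ parametrized by arc length, so that $\ell=\tilde\rho(\pi(L_0),y)=\bar d(y)$. Because $\sigma$ starts at $\pi(L_0)$ and is globally shortest, one has $\bar d(\sigma(s))=\tilde\rho(\pi(L_0),\sigma(s))=s$ for all $s$, so $\bar d\circ\sigma$ runs monotonically over $[0,\bar d(y)]$. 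Since $\bar d(x)$ lies in this interval, there is $s^{*}=\bar d(x)$ with $\bar d(\sigma(s^{*}))=\bar d(x)$, and injectivity of $\bar d$ forces $\sigma(s^{*})=x$. As a subpath of a shortest path is itself shortest, $\sigma|_{[s^{*},\ell]}$ is a shortest path from $x$ to $y$, whence $\tilde\rho(x,y)=\ell-s^{*}=\bar d(y)-\bar d(x)$; when $\bar d(x)=\bar d(y)$ injectivity gives $x=y$ and the identity is trivial. Combined with the $1$-Lipschitz bound this yields $\tilde\rho(x,y)=|\bar d(x)-\bar d(y)|$ for all $x,y$, i.e.\ $\bar d$ is a distance-preserving bijection onto its image.

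To finish, I would observe that $\bar d(\hls{\mathcal{F}})$, being the continuous image of a compact connected space in $\mathbb{R}$, is a closed bounded interval; since $0=\bar d(\pi(L_0))$ belongs to it, it equals $[0,D]$ with $D=\max_{L}d(L)$, the value attained at the second boundary leaf. Hence $\bar d$ is an isometry of $\hls{\mathcal{F}}$ onto the metric segment $[0,D]$, as claimed. The only delicate point is the reverse inequality above, and everything there hinges on Lemma~\ref{Codim-1-lm0}: this is exactly the step that uses the presence of the boundary leaf $L_0$ together with the one-dimensional transverse direction, for without injectivity of $\bar d$ the leaf space could branch and its image in $\mathbb{R}$ would fail to be a segment.
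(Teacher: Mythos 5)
Your proof is correct, but it takes a genuinely different route from the paper's. The paper establishes the approximate two-sided estimate $\bigl||d(L)-d(L')|-\tilde{\rho}(\pi(L),\pi(L'))\bigr|\leq\epsilon$ directly, by manipulating near-optimal chains of leaves (in particular, inserting $L$ into a near-optimal chain from $L_0$ to $L'$, which re-uses the separation property of leaves in an $I$-bundle a second time beyond Lemma \ref{Codim-1-lm0}), and then concludes via $\epsilon$-nets, the Gromov Lemma \ref{GromovLemma}, and Theorem \ref{GromovDistanceTheorem} that $d_{GH}(\hls{\mathcal{F}},[0,\delta])=0$. You instead prove the exact identity $\tilde{\rho}(x,y)=|\bar d(x)-\bar d(y)|$ in one stroke: you upgrade the length-space lemma of Section \ref{Preliminaries-HLS} to a geodesic space via compactness of $\hls{\mathcal{F}}$ (as the $1$-Lipschitz image of $M$) and Hopf--Rinow, run a shortest path from $\pi(L_0)$, and use $\bar d(\sigma(s))=s$ together with the injectivity from Lemma \ref{Codim-1-lm0} to force the path through $x$. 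Your argument is shorter, avoids the Gromov--Hausdorff approximation machinery entirely, and has the virtue of funnelling all of the codimension-one input into the single Lemma \ref{Codim-1-lm0}; the paper's version is more elementary (no appeal to Hopf--Rinow for length spaces) and matches the $\epsilon$-net technique used uniformly elsewhere in the text. The only points you should state explicitly are that $\hls{\mathcal{F}}$ is compact and connected (so that geodesics exist and the image of $\bar d$ is a full interval with the maximum attained); both are immediate from continuity and surjectivity of $\pi$.
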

\begin{proof}
Let $L_0$ denote the same leaf as in Lemma \ref{Codim-1-lm0}, $d$ be a function on the space of leaves of $\mathcal{F}$  defined by $d(L)=\tilde{\rho}(L_0,L)$, and let $\delta=\max_{L\in\mathcal{F}} d(L)$. Let $\pi:M\to\hls{\mathcal{F}}$ be a natural projection, while $p:M\to [0,\delta]$ be the mapping defined by $p(x)=d(L_x)$. By Lemma \ref{Codim-1-lm0}, for any two leaves such that $\pi(L)\neq \pi(L')$ we have 
\[
d(L)\neq d(L').
\]

Let $\epsilon>0$, and $L,L'\in\mathcal{F}$ be two arbitrary leaves such that $d(L)<d(L')$. Let  $L_1,\dots,L_k,L_{k+1},\dots,L_{k+l}$ be a family of leaves satisfying $L_k=L$, $L_{k+l}=L'$, and
\[
\sum_{\nu=0}^{k+l-1} \dist(L_{\nu},L_{\nu+1})\leq \tilde\rho(\pi(L_0),\pi(L'))+\epsilon. 
\]
Since $\tilde{\rho}(\pi(L_0),\pi(L))\leq \sum_{\nu=0}^{k-1} \dist(L_{\nu},L_{\nu+1})$, we have
\begin{align}\label{I-bundleEq0}
\tilde{\rho}(\pi(L),\pi(L'))\leq \sum_{\nu=0}^{k+l-1} \dist(L_{\nu},L_{\nu+1}) \\
\leq \tilde{\rho}(\pi(L_0),\pi(L')) +\epsilon - \tilde{\rho}(\pi(L_0),\pi(L)) = |d(L')-d(L)| + \epsilon.
\nonumber
\end{align}
Now, let $L_1,\dots,L_k, L_{k+1},\dots,L_{k+l}$ be a family of leaves such that $L_k=L$, $L_{k+l}=L'$
\[
\sum_{\nu=0}^{k-1} \dist(L_{\nu},L_{\nu+1}) \leq \tilde{\rho}(\pi(L_0),\pi(L))+\frac{\epsilon}{2},
\]
and 
\[
\sum_{\nu=k}^{k+l-1} \dist(L_{\nu},L_{\nu+1}) \leq \tilde{\rho}(\pi(L),\pi(L'))+\frac{\epsilon}{2}.
\]
Then
\begin{align*}
d(L') \leq \sum_{\nu=0}^{k+l-1} \dist(L_{\nu},L_{\nu+1}) \leq \tilde{\rho}(\pi(L_0),\pi(L))+\frac{\epsilon}{2} +\tilde{\rho}(\pi(L),\pi(L'))+\frac{\epsilon}{2}\\
\leq d(L) + \tilde{\rho}(\pi(L),\pi(L'))+\epsilon.
\end{align*}
We get
\begin{equation}\label{I-bundleEq1}
d(L') - d(L) \leq \tilde{\rho}(\pi(L),\pi(L'))+\epsilon.
\end{equation}
Since $d(L)-d(L')\leq 0 \leq \tilde{\rho}(\pi(L),\pi(L'))$ we finally get, by (\ref{I-bundleEq0}) and (\ref{I-bundleEq1}),
\begin{equation}\label{I-bundleEq2}
||d(L)-d(L')| - \tilde{\rho}(\pi(L),\pi(L'))| \leq\epsilon.
\end{equation}

Let $A=\{x_1,\dots,x_k\}$ be an $\epsilon$-net on $M$. Then $\pi(A)$ and $p(A)$ are $\epsilon$-nets on $\hls{\mathcal{F}}$ and $([0,d],|\cdot|)$, respectively. Moreover, $\sharp\pi(A)=\sharp p(A)$. By (\ref{I-bundleEq2}), we have
\[
||p(L_i)-p(L_j)| - \tilde{\rho}(\pi(L_i),\pi(L_j))| \leq\epsilon,
\]
where $L_{\nu}=L_{x_{\nu}}$.
By Lemma \ref{GromovLemma}, $d_{GH}(\hls{\mathcal{F}},[0,d])\leq 3\epsilon$. Finally, 
\[d_{GH}(\hls{\mathcal{F}},[0,d])=0,
\]
and, by Theorem \ref{GromovDistanceTheorem}, $\hls{\mathcal{F}}$ is isometric to the metric segment $I=([0,d],|\cdot|)$.
\end{proof}

\subsection{HLS for codim-1 foliations}

Recall now \cite{BBI} that the metric graph $G$ is the result of gluing of a set of a disjoint metric segments $E=\{E_i\}$ and points $V=\{v_i\}$ along an equivalence relation defined in the union of $V$ and the set of the endpoints of the segments equipped with the length metric. A graph $G$ is called {\em finite} if $V$ and $E$ are finite.

\begin{theo}\label{MainTheorem1}
$\hls{\mathcal{F}}$ of any codimension one foliation on a compact Riemannian manifold is isometric to a finite connected metric graph.
\end{theo}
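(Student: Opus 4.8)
The plan is to realize $\hls{\mathcal{F}}$ by cutting $M$ along finitely many distinguished leaves so that each resulting piece is a foliated $I$-bundle, computing the HLS of each piece by Theorem \ref{Codim-1-lm1}, and reassembling the resulting segments with the gluing procedure of Section \ref{Preliminaries-Gluing}. First I would record two soft facts: since $\pi:M\to\hls{\mathcal{F}}$ is a continuous surjection from a compact space, $\hls{\mathcal{F}}$ is compact, and it is a length space as shown in Section \ref{Preliminaries-HLS}. A compact length space that is locally isometric to an interval off a finite set of points is precisely a finite connected metric graph, so the whole problem reduces to producing such a local model and controlling the exceptional set.

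Next I would invoke the structure theory of codim-$1$ foliations (Dippolito's decomposition, see \cite{CC}, \cite{HH}). On a compact manifold one may choose a finite family $\mathcal{N}$ of leaves --- the boundary leaves of the minimal sets together with the leaves carrying nontrivial holonomy that bound the ``gaps'' --- such that the metric completion of each connected component of $M\setminus\bigcup\mathcal{N}$ is a foliated $I$-bundle. The crucial point extracted from this theory is \emph{finiteness}: compactness of $M$ must be used to bound the number of such components and the number of distinct minimal sets that are not single closed leaves.

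Then I would compute the HLS of the building blocks. Each completed gap is an $I$-bundle, so by Theorem \ref{Codim-1-lm1} its HLS is isometric to a metric segment; these segments are the edges. Each minimal set $\mathcal{M}$ collapses to a single point of $\hls{\mathcal{F}}$: any two leaves $L,L'\subset\mathcal{M}$ satisfy $\dist(L,L')=0$ because every leaf of a minimal set is dense in it, whence $\tilde{\rho}([L],[L'])=0$. These collapsed points, together with the isolated closed boundary leaves, are the vertices. The way the edges attach to the vertices is governed by the incidence of the gaps with the leaves of $\mathcal{N}$, which is exactly the tangential gluing situation of Theorem \ref{TangentialGluing}: I would identify $\hls{\mathcal{F}}$ with the space obtained by gluing the segments along the equivalence relation induced on their endpoints by $\pi$. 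Since there are finitely many segments and finitely many gluing points, and $M$ is connected (hence so is $\hls{\mathcal{F}}$), the result is a finite connected metric graph, and Lemma \ref{GromovLemma} together with Theorem \ref{GromovDistanceTheorem} can be used to promote the combinatorial identification to a genuine isometry.

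The main obstacle is the finiteness statement of the second step: a codim-$1$ foliation may have infinitely many compact leaves (for instance the product foliation of $T^2$, whose HLS is the circle $S^1$), so one cannot naively collapse every minimal set to a vertex. I would handle this by distinguishing the non-recurrent, \emph{product} directions --- along which a continuum of leaves survives as the interior of an edge, contributing no collapsing --- from the genuinely recurrent leaves where accumulation forces $\tilde{\rho}=0$ and true collapsing or branching occurs. The claim is that only the latter produce vertices and that, by compact generation of the holonomy pseudogroup on a transversal, they occur at finitely many points of $\hls{\mathcal{F}}$. Verifying that this exceptional set is finite and that the local model between consecutive exceptional points is exactly an interval is where the real work lies; this is what upgrades the soft ``compact $1$-dimensional length space'' picture to an honest finite graph.
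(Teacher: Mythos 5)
Your proposal follows essentially the same route as the paper: a finite decomposition of $M$ into saturated pieces whose HLS is a point (the collapsed minimal sets) and foliated $I$-bundles, followed by Theorem \ref{Codim-1-lm1} for the edges and Theorem \ref{TangentialGluing} to reassemble. The finiteness step you flag as ``where the real work lies'' is exactly the point the paper also outsources, citing the covering construction of \cite{IW} rather than Dippolito's decomposition, so your argument is correct modulo the same external structure-theoretic input.
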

\begin{proof}
Following the proof of the main theorem of \cite{IW}, we can cover $M$ by a finite number of mutually disjoint saturated  neighborhoods $N_i$ ($i=1,\dots,k$) such that the HLS of the foliation restricted to $N_i$ is a singleton, and a finite number of mutually disjoint foliated $I-bundles$ (denoted by $C_1,\dots,C_m$) with their HLS's, by Lemma \ref{Codim-1-lm1}, isometric to $[0,d_j]$, $d_j>0$, $1\leq j\leq m$. We can assume that $N_i\cap C_j\subset \partial N_i \cap \partial C_j$, $1\leq i\leq k$, $1\leq j\leq m$ (Figure \ref{GraphConstrFigure1.1}), and that the sets $N_i$ ($i=1,2,\dots,k$) are maximal, i.e. $\pi^{-1}(\pi(N_i))=N_i$, where $\pi:M\to \hls{\mathcal{F}}$ denotes the natural projection.

\begin{figure}[h]
\centering
 \includegraphics[scale=1]{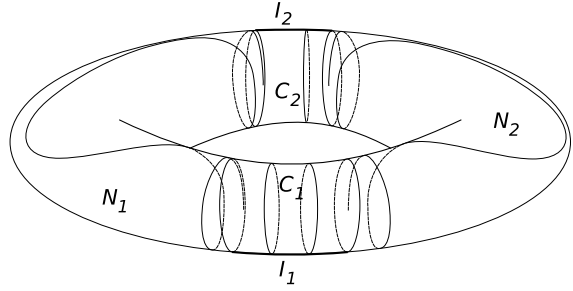}
 \caption{The sets $N_i$ and $C_j$.}
 \label{GraphConstrFigure1.1}
\end{figure}

Let $v_i= \hls{\mathcal{F}|N_i}$, and $V=\{v_1,\dots,v_k\}$. Next, let 
\[
E=\{I_1,\dots,I_m\},\quad I_j=\hls{\mathcal{F}|C_j}=[0,d_j].
\]

Denote by $\pi_j:C_j\to [o,d_j]$ natural projections.

Introduce in $V$ and in the set of the endpoints of the segments $I_j$, $1\leq j\leq m$, the smallest equivalence relation $\sim$ generated by the following relation:

A point $v_i$ is in the relation with an endpoint $a$ ($a$ can be equal to $0$ or $d_j$) of the segment $I_j$ iff $N_i\cap \pi_j^{-1}(a)\neq \emptyset$.

\begin{figure}[h]
\centering
 \includegraphics[scale=1]{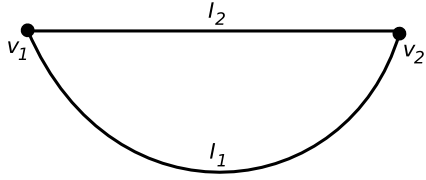}
 \caption{Construction of a graph.}
 \label{GraphConstrFigure1.2}
\end{figure}

Glue points from $V$ and segments from $E$ along $\sim$. Obtained space endow with the length metric. In this way we obtain a metric graph $G$ (Figure \ref{GraphConstrFigure1.2}). By the construction of $G$ and Theorem \ref{TangentialGluing}, $\hls{\mathcal{F}}$ is isometric to $G$.
\end{proof}

\begin{rema}
One can easily check that it is possible to construct a number of metric graphs, not necessarily finite, isometric to $\hls{\mathcal{F}}$, but all of them are isometric as metric spaces wit length metric. 
\begin{figure}[h]
\centering
 \includegraphics[scale=1]{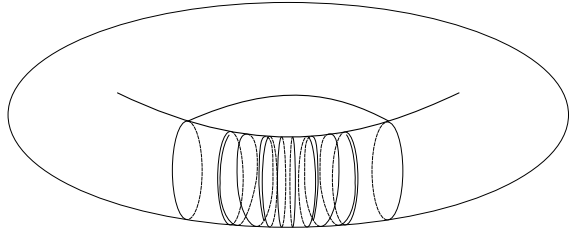}
 \caption{A part of a foliation by Kronecker components and circles.}
 \label{KroneckerFoliation}
\end{figure}
For example, consider a foliation of $T^2$ by a infinite number of Kronecker components separated by circles foliation (Figure \ref{KroneckerFoliation}). Then every Kronecker component can define itself a node of a graph, and every circle foliation can define an edge. One also can select only one Kronecker component to be a node, and the rest of foliation to be an edge. One can check that any metric graph constructed this way is isometric to a circle.
\end{rema}

\begin{exem}
Recall that any compact manifold of dimension $1$ is either an interval $I$ or a $1$-dimensional sphere $S^1$. Hence, a foliated bundle of codim-1 is either $I$-bundle or $S^1$-bundle. One can see that Hausdorff leaf space for a codim-1 foliated bundle is a singleton, a metric segment or a circle $S^1$.
\end{exem}

\begin{lemm}\label{Codim-1-lm21}
For every $k\in\mathbb{N}$ there exists a compact foliated manifold $(M,\mathcal{F})$ such that $M$ has exactly $k$ boundary components and $\hls{\mathcal{F}}$ is a singleton, and the holonomy mappings $h$ of the boundary leaves satisfy $h(0)=0$, $h'(0)=1$, $h^{(n)}(0)=0$ for all $n\geq 2$.
\end{lemm}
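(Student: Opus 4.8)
For every $k\in\mathbb{N}$ there exists a compact foliated manifold $(M,\mathcal{F})$ with exactly $k$ boundary components, with $\hls{\mathcal{F}}$ a singleton, and such that the holonomy maps $h$ of the boundary leaves are infinitely tangent to the identity at the fixed point: $h(0)=0$, $h'(0)=1$, and $h^{(n)}(0)=0$ for all $n\geq 2$.

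The plan is to realize $(M,\mathcal{F})$ by turbulizing a trivial $S^1$-bundle along several fibres and then deleting the interiors of the resulting Reeb components: each deleted Reeb component contributes exactly one boundary leaf of the prescribed holonomy type, while the turbulization forces the whole leaf space to collapse to a point. Throughout I take $\dim M\geq 3$, which is harmless since the lemma only asserts existence.

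First I would fix a closed $(n-1)$-manifold $L_0$ and start from the product foliation on $L_0\times S^1$, whose leaves are the slices $L_0\times\{t\}$; as recorded in the Example following Theorem \ref{MainTheorem1}, the Hausdorff leaf space of this $S^1$-bundle is isometric to a circle. I would then pick $k$ pairwise disjoint fibres $\gamma_1,\dots,\gamma_k$, each of the form $\{p_j\}\times S^1$, every one of which meets each leaf exactly once, and turbulize $\mathcal{F}$ along each $\gamma_j$ inside disjoint tubular neighbourhoods $N(\gamma_j)$. In the turbulization I would use precisely the profile function $\lambda$ of the construction in Section \ref{BasicContructions}, whose derivatives of all orders vanish at $0$; this is exactly the device that guarantees that the boundary torus $\partial R_j$ of the created Reeb component $R_j$ is a leaf whose holonomy germ $h$ satisfies $h(0)=0$, $h'(0)=1$ and $h^{(n)}(0)=0$ for all $n\geq 2$.

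The leaf-space bookkeeping is then handled by Theorem \ref{Turbulization}. Since $\gamma_1$ meets every leaf, the set $\pi(\gamma_1([0,1]))$ is all of $\hls{\mathcal{F}}=S^1$, so collapsing it to a point turns the Hausdorff leaf space of the once-turbulized foliation into a singleton; the remaining turbulizations (carried out in disjoint neighbourhoods, so each $\gamma_j$ is still a transversal meeting all leaves) can only keep it a singleton. I would then let $M$ be the compact manifold obtained by deleting the open interiors $\operatorname{int}R_1,\dots,\operatorname{int}R_k$. These interiors are saturated, so $M$ is a saturated compact manifold with boundary; it is connected, since removing tubes around disjoint fibres from $L_0\times S^1$ cannot disconnect it when $\dim M\geq 3$, and its boundary is exactly the $k$ tori $\partial R_1,\dots,\partial R_k$, each a leaf carrying the required flat holonomy.

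The point needing the most care is that $\hls{\mathcal{F}}$ of the truncated manifold $M$ is still a single point, since a priori the distance $\rho$ computed inside $M$ could exceed the one computed in the closed model. Here I would use that every remaining leaf, once followed into a neighbourhood $N(\gamma_j)$, spirals onto $\partial R_j$ from outside $R_j$; hence $\dist(L,\partial R_j)=0$ for every leaf $L$ of $M$, all these distances being realised by points lying in $M$. Consequently $\rho(L,L')\leq\rho(L,\partial R_j)+\rho(\partial R_j,L')=0$ for any two leaves of $M$, so $\hls{\mathcal{F}}$ is a singleton. The main obstacle is precisely this last verification — controlling $\rho$ after truncation — together with checking that the smoothing of $\lambda$ produces the holonomy germ in the stated normal form; both are settled by the flatness of $\lambda$ at $0$, the same mechanism already exploited for turbulization and spinning.
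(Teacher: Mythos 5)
Your proposal is correct and follows essentially the same route as the paper: turbulize the product foliation on (a product of $S^1$ with a closed leaf) along $k$ disjoint closed transversal fibres using the flat profile $\lambda$, delete the open Reeb components, and observe that every remaining non-boundary leaf accumulates on each boundary leaf from outside, forcing $\rho\equiv 0$ and hence a singleton HLS. Your explicit verification that the accumulation is realised by points of the truncated manifold $M$ is a slightly more careful rendering of the paper's one-line remark that every leaf accumulates on every boundary component.
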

\begin{proof}
Let $\hat M=S^1\times \Sigma$, where $\Sigma$ is a compact surface of dimension $2$, and let $\hat{\mathcal{F}}$ be the product foliation by $\{z\}\times \Sigma$, $z\in S^1$. Let $x_1,\dots, x_k\in S^2$. Let $N_i$ ($i=1,\dots,k$) be disjoint tubular neighbourhoods of $\gamma_i=S_1\times \{x_i\}$. Turbulize $\hat{\mathcal{F}}$ simultaneously along $\gamma_i$. One can check \cite{CC} that it is possible to turbulize in such way that the holonomy mappings $h$ of the compact leaves of the Reeb components satisfy $h(0)=0$, $h'(0)=1$, $h^{(n)}(0)=0$ for all $n\geq 2$.

Next, let $M$ be a foliated manifold obtained from $(M,\mathcal{F})$ by removing the interior of the Reeb components of the turbulized foliation. It follows that $M$ is compact, and its boundary has exactly $k$ components homeomorphic with the torus $T^2$. Moreover, every leaf different from boundary leaves accumulate on every boundary component. Thus $\hls{\mathcal{F}}$ is a singleton, and $\mathcal{F}$ is a foliation with desired properties.
\end{proof}

\begin{rema}\label{remark:codim1Proper}
One can see that all leaves of the foliation constructed in Lemma \ref{Codim-1-lm21} are proper.
\end{rema}

\begin{lemm}\label{Codim-1-lm22}
For any metric segment $I=[0,d]$ there exists a compact foliated Riemannian manifold $(M,\mathcal{F},g)$ carrying codim-1 foliation such that $\hls{\mathcal{F}}$ is isometric to $I$.
\end{lemm}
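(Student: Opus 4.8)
The plan is to produce the required manifold as the simplest possible foliated $I$-bundle, namely a metric product, and then read off its HLS from Theorem \ref{Codim-1-lm1}. First I would fix a closed $n$-manifold $N$ (any compact manifold without boundary will do; to keep the construction compatible with the node pieces of Lemma \ref{Codim-1-lm21} one may take $N=T^2$) together with a fixed Riemannian metric $g_N$, and set $M=N\times[0,d]$ with the product foliation $\mathcal{F}=\{\,N\times\{t\}:t\in[0,d]\,\}$ and the product metric $g=g_N\oplus \mathrm{d}t^2$. The projection onto the first factor exhibits $(M,\mathcal{F},\mathrm{pr})$ as a (trivial) foliated $I$-bundle of codimension one: the fibres $\{x\}\times[0,d]$ are transverse to the leaves $N\times\{t\}$, and $M$ is compact.

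Next I would apply Theorem \ref{Codim-1-lm1} to conclude that $\hls{\mathcal{F}}$ is isometric to a metric segment $[0,\delta]$, where, in the notation of that theorem, $\delta=\max_{L}\tilde\rho(L_0,L)$ with $L_0=N\times\{0\}$. It then only remains to compute $\delta$. Because the metric is a product, the leaf distance is $\dist(N\times\{s\},N\times\{t\})=|s-t|$, realized by a vertical segment; moreover the $t$-coordinate is a genuine transverse coordinate, so any chain of leaves joining $N\times\{0\}$ to $N\times\{t\}$ has total length at least $t$, whence $\tilde\rho(L_0,N\times\{t\})=t$. Therefore $\delta=d$ and $\hls{\mathcal{F}}$ is isometric to $I=[0,d]$.

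There is no genuine obstacle here; the only point requiring a word of care is the lower bound $\rho(L_0,N\times\{t\})\geq t$, i.e. that distinct leaves are never identified and that no chain beats the vertical distance. This is immediate from the product structure, since the variation of the $[0,d]$-coordinate over any single leaf-to-leaf step is bounded by that step's distance, so its total variation along any admissible chain from $N\times\{0\}$ to $N\times\{t\}$ is at least $t$. Alternatively, one may bypass Theorem \ref{Codim-1-lm1} altogether and verify directly that $(\tilde{\mathcal{L}},\tilde\rho)$ is the interval $[0,d]$ under the assignment $t\mapsto N\times\{t\}$, which amounts to the same computation.
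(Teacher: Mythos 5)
Your proposal is correct and is essentially the paper's own construction: the paper simply takes $M=[0,d]\times\Sigma$ with the product foliation $\{t\}\times\Sigma$ and the product metric, leaving the verification implicit. Your additional details (invoking Theorem \ref{Codim-1-lm1} and checking via the monotonicity of the transverse coordinate that $\tilde\rho(L_0,N\times\{t\})=t$, so the segment has length exactly $d$) are a sound filling-in of what the paper omits.
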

\begin{proof}
Taking $M=[0,d]\times \Sigma$, where again $\Sigma$ is a compact surface, with product foliation $\{t\}\times \Sigma$ and the product metric we get the statement. 
\end{proof}

\begin{theo}\label{MainTheorem2}
For every finite connected metric graph $G$ there exists a compact foliated Riemannian manifold $(M,\mathcal{F},g)$ such that $\hls{\mathcal{F}}$ is isometric to $G$. Moreover, every leaf of $\mathcal{F}$ is proper.
\end{theo}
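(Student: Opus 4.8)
The plan is to build $M$ by gluing together the elementary blocks produced in Lemmas \ref{Codim-1-lm21} and \ref{Codim-1-lm22} according to the combinatorics of $G$, and then to identify $\hls{\mathcal{F}}$ by reading the proof of Theorem \ref{MainTheorem1} backwards. Write the vertex set of $G$ as $V=\{v_1,\dots,v_p\}$ and the edge set as $E=\{e_1,\dots,e_q\}$, where $e_j$ has length $d_j>0$, and let $k_i=\deg v_i$ be the number of edge-ends incident to $v_i$ (a loop contributing $2$). For each vertex $v_i$ I apply Lemma \ref{Codim-1-lm21} with $k=k_i$ to obtain a compact foliated manifold $(M_i,\mathcal{F}_i)$ whose boundary consists of exactly $k_i$ torus leaves, whose $\hls{\mathcal{F}_i}$ is a single point, and whose boundary holonomies $h$ satisfy $h(0)=0$, $h'(0)=1$ and $h^{(n)}(0)=0$ for $n\geq 2$. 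For each edge $e_j$ I apply Lemma \ref{Codim-1-lm22} with $\Sigma=T^2$, obtaining the product block $C_j=[0,d_j]\times T^2$ with the product foliation and product metric, whose two boundary leaves are tori and whose $\hls{\mathcal{F}|C_j}$ is isometric to $[0,d_j]$. (The only graph with an isolated vertex, namely a single point, is realized directly by the Reeb foliation of $S^3$, all of whose leaves are proper.)

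Next I would glue. The incidence data of $G$ assigns to each of the two ends of every edge $e_j$ a boundary component of a vertex block. I arrange all blocks to carry a fixed standard product metric near their boundary tori, so that every boundary leaf is an isometric copy of one fixed flat $T^2$ while the transverse lengths $d_j$ are controlled solely by the interval factors of the $C_j$; then each prescribed identification is realized by a leaf-preserving isometry $h$. The higher-order flatness of the holonomies from Lemma \ref{Codim-1-lm21}, together with the product (hence trivial-holonomy) structure of the $C_j$, lets the foliation and the Riemannian metric be extended smoothly across each seam, so that for each identification the hypotheses of Theorem \ref{TangentialGluing} hold: a leaf-preserving boundary isometry and a smooth metric $g$ restricting to the given metric on each block. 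Carrying out all identifications yields a compact foliated Riemannian manifold $(M,\mathcal{F},g)$ with codim-$1$ foliation, and $M$ is connected because $G$ is.

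It remains to compute the leaf space and to check properness. By construction $M$ decomposes into the saturated pieces $M_i$, each with singleton HLS, and the $I$-bundles $C_j$, each with HLS isometric to $[0,d_j]$, glued tangentially along boundary leaves exactly according to the incidence relation of $G$; this is precisely the situation analysed in the proof of Theorem \ref{MainTheorem1}. Applying Theorem \ref{TangentialGluing} over these gluings therefore shows that $\hls{\mathcal{F}}$ is isometric to the space obtained by gluing the vertex points $\hls{\mathcal{F}_i}$ to the edge segments $\hls{\mathcal{F}|C_j}=[0,d_j]$ along the relation defining $G$, i.e. to $G$ itself. For properness, the edge blocks carry product foliations, so all their leaves are compact and proper, while the vertex-block leaves are proper by Remark \ref{remark:codim1Proper}; since each gluing is along a compact boundary leaf, no leaf of $\mathcal{F}$ accumulates on itself, and hence every leaf of $\mathcal{F}$ is proper.

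The step I expect to be the main obstacle is the smoothness of the gluing: producing one genuinely $C^\infty$ foliation and one smooth Riemannian metric $g$ on $M$ that restrict to the prescribed data on every block. This is exactly what forces the special choice of boundary holonomies in Lemma \ref{Codim-1-lm21} --- their infinite tangency to the identity matches the trivial holonomy of the product edge blocks to all orders at the gluing tori, which is what Theorem \ref{TangentialGluing} needs in order to yield an \emph{isometry} rather than merely the homeomorphism of its accompanying remark.
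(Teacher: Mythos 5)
Your proposal is correct and follows essentially the same route as the paper: vertex blocks from Lemma \ref{Codim-1-lm21}, edge blocks $[0,d_j]\times T^2$ from Lemma \ref{Codim-1-lm22}, tangential gluing via Theorem \ref{TangentialGluing} made possible by the infinitely flat boundary holonomies, and properness via Remark \ref{remark:codim1Proper}. The only (immaterial) organizational difference is that the paper first cuts each edge of $G$ in the middle and assembles a star-shaped manifold for each vertex before gluing these pieces along the edge midpoints, whereas you glue vertex blocks to whole edge blocks in a single pass.
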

\begin{proof}
Let $G=(V,E)$ be a finite connected metric graph with $k$ nodes. ''Cutting'' every edge in the middle we obtain $k$ connected metric graphs $G_i$ (Figure \ref{GraphConstrFigure2.1}).

\begin{figure}[h]
\centering
 \includegraphics[scale=1]{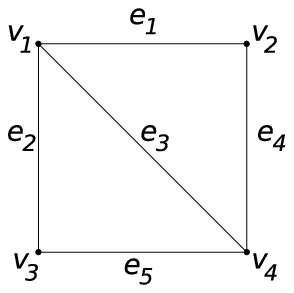}\hskip3cm \includegraphics[scale=1]{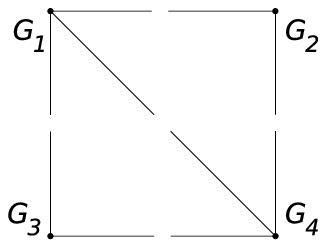}
 \caption{Star graphs $G_i$.}
 \label{GraphConstrFigure2.1}
\end{figure}

Consider a graph $G_i$. If all nodes of $G_i$ have only one edge, then assign for $G_i$ a foliated manifold indicated in Lemma \ref{Codim-1-lm22}. 

Let $v$ be a node having more than one edge, let say $m$. One can assign for $v$ a $3$-dimensional foliated Riemannian manifold $(V_i,\mathcal{F}_i,g_i)$ indicated in Lemma \ref{Codim-1-lm21} with exactly $m$ boundary components homeomorphic to the torus $T^2$, and such that HLS for $V_i$ is a singleton, and the holonomy mappings $h$ of the boundary leaves satisfy $h(0)=0$, $h'(0)=1$, $h^{(n)}(0)=0$ for all $n\geq 2$. 

Next, for every edge assign a manifold $E^i_{\nu}=[0,d_i]\times T^2$ (as described in Lemma \ref{Codim-1-lm22}), $1\leq \nu\leq m$. Note that either $\mathcal{F}_i$ or foliations of $E_i$ are tangent to the boundary components. 

Since the holonomy mappings $h$ of the boundary leaves satisfy $h(0)=0$, $h'(0)=1$, $h^{(n)}(0)=0$ for all $n\geq 2$, then by Theorem \ref{TangentialGluing}, one can glue manifolds $V_i$ and $E_i$ to obtain a compact foliated Riemannian manifold $(M_i,\mathcal{F}_i,g_i)$ with $\hls{\mathcal{F}_i}$ isometric to $G_i$ (Figure \ref{GraphConstrFigure2.2}).
\begin{figure}[h]
\centering
 \includegraphics[scale=1]{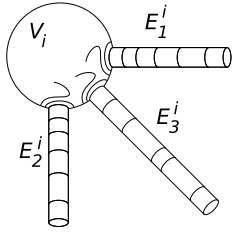}
 \caption{Construction of a manifold $M_i$ for the graph $G_i$.}
 \label{GraphConstrFigure2.2}
\end{figure}
Moreover, the boundary components of $M_i$ ($i=1,\dots,m$) homeomorphic to $T^2$, foliations $\mathcal{F}_i$ on each $M_i$ are tangent to the boundary, and holonomy mappings $h$ of boundary leaves satisfy $h'(0)=1$, $h^{(n)}(0)=0$ for all $n\geq 2$.

\begin{figure}[h]
\centering
 \includegraphics[scale=1]{graph-constr2.1.eps}
 \hskip2cm
 \includegraphics[scale=1]{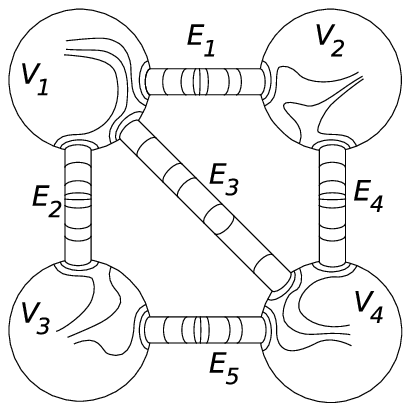}
 \caption{The graph $G$ and the manifold $(M,\mathcal{F},g)$.}
 \label{GraphConstrFigure2.3}
\end{figure}

Again, by Theorem \ref{TangentialGluing}, one can glue manifolds $M_i$ to get a compact foliated manifold $(M,\mathcal{F},g)$ such that $\hls{\mathcal{F}}$ is isometric to $G$ (Figure \ref{GraphConstrFigure2.3}).

By Remark \ref{remark:codim1Proper}, all leaves of $\mathcal{F}$ are proper. This ends our proof.
\end{proof}

\subsection{Warped foliations in codim-1}

Let $(M,\mathcal{F},g)$ be an arbitrary compact foliated Riemannian manifold with codim-1 foliation. Let $(f_n)_{n\in\mathbb{N}}$, $f_n:M\to (0,1]$, be a sequence of warping functions (see Section \ref{Preliminaries-WarpedFoliations}). We will now develop the necessary and sufficient condition for a sequence of warped foliations $(M_{f_n})_{n\in\mathbb{N}}$ to converge to the Hausdorff leaf space for the foliation $\mathcal{F}$.

First, note that on any connected finite metric graph $G$ with at least two nodes there exist a measure $\mu$ constants and $\beta\geq 1$, $\eta_0>0$ such that for all $\eta<\eta_0$ and $x\in G$
\begin{equation}\label{BishopEq}
\frac{1}{\beta} \eta\leq \mu(B_d(x,\eta)) \leq \beta\eta,
\end{equation}
where $B_d(x,\eta) = \{y\in X:\quad d(x,y)<\eta\}$. Indeed, denote by $E=\{e_1,\dots,e_k\}$ the set of vertices, and by $V=\{I_1,\dots,I_m\}$ the set of all edges of the graph $G$. Let $\mu$ be a measure induced by the Lebesgue measure on edges $I_j$ of $G$ and let $\eta_0= \frac{1}{2}\min l(I_j)$ and $\beta=\max\{2, \max_{i=1,\dots,k} n(e_i)\}$, where $l(I)$ denotes the length of an edge $I$, and $n(e)$ denotes the number of edges in a vertex $e$. Such $\mu$ satisfies (\ref{BishopEq}).

Let $(f_n)_{n\in\mathbb{N}}$, $f_n:M\to (0,1]$, be a sequence of warping functions on $(M,\mathcal{F},g)$, where $\mathcal{F}$ is a foliation of codimension one.

\begin{theo}\label{SufficientNecessaryCodim1}
$d_{GH}((M,g_{f_n}),\hls{\mathcal{F}})\to 0$ if and only if for every $\varepsilon>0$ there exists $N\in\mathbb{N}$ such that for any $n>N$  the following is satisfied:

There exists a finite family of leaves $F^n=\{F^n_1,\dots,F^n_k\}$ such that
\begin{enumerate}
\item $\bigcup F^n$ is $\varepsilon$-dense in $M$,
\item $f_n|_{\bigcup F^n} < \varepsilon$. 
\end{enumerate}
\end{theo}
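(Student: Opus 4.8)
The plan is to prove both implications by combining the structural result that $\hls{\mathcal{F}}$ is isometric to a finite connected metric graph $G$ (Theorem~\ref{MainTheorem1}) with the convergence machinery of Theorem~\ref{ConvergenceTheorem}. The natural projection $\pi:M\to\hls{\mathcal{F}}\cong G$ will be the main bridge: condition~(1) says the chosen leaves cover $M$ up to $\varepsilon$, hence their images $\pi(\bigcup F^n)$ are $\varepsilon$-dense in $G$, while condition~(2) says the warping function is small precisely on this covering family.

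For the \emph{sufficiency} direction ($\Leftarrow$), I would argue that the hypotheses let one reconstruct, for each large $n$, an approximating net in $(M,g_{f_n})$ whose mutual distances match those in $G$. First I would fix $\varepsilon>0$, take the family $F^n$ and select a finite $\varepsilon$-net $\{y_1,\dots,y_m\}\subset\bigcup F^n$ meeting distinct fibers $\pi^{-1}(\pi(y_i))$; by~(1) its projection is $\varepsilon$-dense in $G$. The key estimate is that for points lying on leaves where $f_n<\varepsilon$, the warped distance $d_{f_n}(y_i,y_j)$ is controlled by $\tilde\rho(\pi(y_i),\pi(y_j))$ up to a small error, because the tangential contribution along those leaves is scaled down by $f_n<\varepsilon$ and the transverse part is governed by $\tilde\rho$ exactly, via Lemma~\ref{ConvergenceTheoremLemma1}. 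This is essentially the computation carried out in the proof of Theorem~\ref{ConvergenceTheorem}: one connects $y_i$ to $y_j$ by a chain of leaves realizing $\tilde\rho$ up to $\varepsilon$, and the short hops along leaves cost at most $\varepsilon$ each in the $f_n$-metric. Then Lemma~\ref{GromovLemma} gives $d_{GH}((M,g_{f_n}),\hls{\mathcal{F}})\le C\varepsilon$, and letting $\varepsilon\to 0$ together with Theorem~\ref{GromovDistanceTheorem} yields convergence.

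For the \emph{necessity} direction ($\Rightarrow$), I would argue contrapositively using the volume-growth estimate~(\ref{BishopEq}). Suppose convergence holds but condition~(1)--(2) fails for some $\varepsilon_0$; then along a subsequence there is no finite family of leaves that is both $\varepsilon_0$-dense and carries small $f_n$. This forces a region $U\subset M$ of definite transverse size on which $f_n$ stays bounded below, so that the tangential directions are not collapsed: the warped volume of a transverse ball in $(M,g_{f_n})$ does not shrink to the one-dimensional volume predicted by the edge-measure $\mu$ on $G$. Quantitatively, the measure-comparison~(\ref{BishopEq}) on $G$ gives linear ball growth $\mu(B_d(x,\eta))\asymp\eta$, whereas the persistence of an uncollapsed tangential leaf direction forces ball volume in $M_{f_n}$ to grow at least like $\eta^{1+\dim\mathcal{F}}$ on that region; this discrepancy obstructs the Gromov--Hausdorff convergence, contradicting the assumption.

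I expect the \textbf{necessity direction to be the main obstacle}. The sufficiency is a faithful adaptation of the already-proven Theorem~\ref{ConvergenceTheorem}, where the sets $\bigcup F^n$ play the role of the dense set $G\subset M$ on which the warping functions go to zero; the only new point is that here $f_n$ need only be small on a covering \emph{family of leaves} rather than pointwise-convergent, which is handled by the $\varepsilon$-level estimates rather than genuine limits. The converse is harder because one must extract genuine geometric information from the \emph{failure} of convergence and translate it into the combinatorial-metric language of the graph $G$; the subtlety is ruling out that a non-collapsing tangential direction could still be invisible at the scale of the Gromov--Hausdorff metric. The volume-growth bound~(\ref{BishopEq}), which is why it is recorded just before the theorem, is exactly the tool to detect such an uncollapsed direction, and making the dimension-counting rigorous (controlling how warped volume of transverse tubes degenerates as $f_n\to 0$ only off $\bigcup F^n$) is where the real work lies.
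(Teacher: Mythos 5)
Your overall strategy matches the paper's, which gives no details here: it states only that the sufficiency is proved exactly as Theorem \ref{ConvergenceTheorem} (with $\bigcup F^n$ playing the role of the dense set on which the warping functions become small) and that the necessity follows the proof of Theorem 6.5 of \cite{W}, for which the measure $\mu$ and the bound (\ref{BishopEq}) are set up immediately before the statement. Your sufficiency argument is a faithful and correct adaptation: fix $\varepsilon$, build a net from $\bigcup F^n$, estimate $d_{f_n}$ against $\tilde\rho$ by chains of leaves with cheap tangential hops, and invoke Lemmas \ref{ConvergenceTheoremLemma1} and \ref{GromovLemma}.

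The one genuine gap is in how you close the necessity direction. Gromov--Hausdorff convergence carries no information about volume, so the claim that a discrepancy between $\mu(B_d(x,\eta))\asymp\eta$ on $G$ and a ball volume of order $\eta^{1+\dim\mathcal{F}}$ in $M_{f_n}$ ``obstructs the convergence'' is not, as written, a valid deduction. The argument must be run through packing numbers: (\ref{BishopEq}) bounds from above the number of pairwise $\delta$-separated points in any $\eta$-ball of $G$ by roughly $\beta^2\eta/\delta$, and Gromov--Hausdorff closeness transfers such capacity bounds between spaces, whereas it does not transfer measures. On the other side, the negation of conditions (1)--(2), together with the fact that $f_n$ is basic and hence constant on each leaf, produces a ball $B_g(x_n,c\varepsilon_0)$ such that every leaf meeting it satisfies $f_n\geq\varepsilon_0$; the leaf direction is then not collapsed below scale $\varepsilon_0$, so one can place of order $(\eta/\delta)\cdot(c'\varepsilon_0/\delta)^{\dim\mathcal{F}}$ pairwise $\delta$-separated points inside a $g_{f_n}$-ball of radius $\eta$ there, exceeding the graph bound once $\delta$ is small. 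It is this packing count, not the volume comparison, that yields the contradiction; with that substitution your outline goes through and coincides with the argument the paper imports from \cite{W}.
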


The proof of the sufficient condition is analogical to the proof of Theorem \ref{ConvergenceTheorem} in Section \ref{Convergence}. The proof of the necessary condition is the same as the proof of Theorem 6.5 in \cite{W}. We don't here repeat them and we left them for the reader. 

\section{Final remarks}\label{FinalRemarks}

One can ask, what is the classification of HLS for foliations of codimension greater than one. This question still is open. We only present some results for an arbitrary codimension. 

Let $(M,\mathcal{F},g)$ be a compact connected foliated Riemannian manifold, and again let $\pi:M\to\hls{\mathcal{F}}$ be the natural projection. One can easily check that $\pi$ is continuous. Moreover, for any leaf $L\in\mathcal{F}$ the set $\pi^{-1}(\pi(L))$ is a closed, nonempty, saturated subset of $M$.

Let us recall that a subset $A\subseteq M$ is called minimal if it is nonempty, closed and saturated and there is no proper subset of $A$ with these properties \cite{CC}. From the construction of $\hls{\mathcal{F}}$ it follows that for any leaf $L\in\mathcal{F}$ the set $\pi^{-1}(\pi(L))$ contains a minimal set.

As a simple consequence of the above observations we have:

\begin{theo}\label{MinimalSetsTheorem}
If the number of minimal sets of $\mathcal{F}$ is countable then the $\hls{\mathcal{F}}$ is a singleton.
\end{theo}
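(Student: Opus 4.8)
The plan is to argue by contradiction: I assume that $\hls{\mathcal{F}}$ is \emph{not} a singleton and then manufacture uncountably many distinct minimal sets, contradicting the hypothesis. The construction will use only the three facts recorded just before the statement, namely that $\pi$ is continuous (and clearly surjective), that $M$ is compact and connected, and that every fiber $\pi^{-1}(\pi(L))$ is a nonempty closed saturated set containing a minimal set.

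First I would establish that $\hls{\mathcal{F}}$ is uncountable. Since $M$ is connected and $\pi\colon M\to\hls{\mathcal{F}}$ is continuous and onto, the space $\hls{\mathcal{F}}=\pi(M)$ is a connected compact metric space. Supposing it contains two distinct points $z_0\neq z_1$, put $r=\tilde{\rho}(z_0,z_1)>0$ and consider the distance function $\phi\colon\hls{\mathcal{F}}\to\mathbb{R}$, $\phi(w)=\tilde{\rho}(z_0,w)$. Being $1$-Lipschitz, $\phi$ is continuous, so its image is a connected subset of $\mathbb{R}$ containing $0=\phi(z_0)$ and $r=\phi(z_1)$, hence contains the whole interval $[0,r]$. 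As points carrying different values of $\phi$ are necessarily distinct, $\hls{\mathcal{F}}$ has at least the cardinality of $[0,r]$ and is therefore uncountable. Note that the length-space structure is not even needed here; plain connectedness of $\hls{\mathcal{F}}$ suffices.

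Next I would convert this into uncountably many minimal sets. For each $w\in\hls{\mathcal{F}}$ the fiber $\pi^{-1}(w)$ contains, by the stated observation, a minimal set, so I may choose one such minimal set $A_w\subseteq\pi^{-1}(w)$. For $w\neq w'$ the fibers $\pi^{-1}(w)$ and $\pi^{-1}(w')$ are disjoint, whence $A_w\neq A_{w'}$: if $A_w=A_{w'}$ then this nonempty set would lie in $\pi^{-1}(w)\cap\pi^{-1}(w')=\emptyset$, a contradiction. Thus $w\mapsto A_w$ is an injection of the uncountable set $\hls{\mathcal{F}}$ into the collection of minimal sets of $\mathcal{F}$, forcing the latter to be uncountable and contradicting the assumption of countably many minimal sets. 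Hence $\hls{\mathcal{F}}$ must be a singleton.

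I do not expect a genuine obstacle, since the whole argument is elementary once the three preparatory facts are in hand; the only step requiring a little care is the passage from ``more than one point'' to ``uncountable,'' which I handle via connectedness and the intermediate value theorem applied to the continuous distance function $\phi$, rather than appealing to any geodesic structure of the length space.
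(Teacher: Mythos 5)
Your proof is correct and is essentially the paper's argument run in contrapositive form: the paper likewise notes that countably many minimal sets force $\hls{\mathcal{F}}$ to be countable (via the injection from fibers to minimal sets) and that $\hls{\mathcal{F}}$ is compact and connected, leaving implicit the fact that a connected metric space with more than one point is uncountable. You have simply made that last step explicit with the distance function and the intermediate value theorem, which is a welcome filling-in of detail rather than a different route.
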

\begin{proof}
Since the number of minimal sets is countable, then $\hls{\mathcal{F}}$ is a countable set. The projection $\pi:M\to\hls{\mathcal{F}}$ is continuous, hence $\hls{\mathcal{F}}$ is compact and connected. This ends our proof.
\end{proof}

\begin{theo}
If $\mathcal{F}$ contains a compact leaf with finite holonomy then $\hls{\mathcal{F}}$ contains an open subset $U$ homeomorphic to an open set of $\mathbb{R}^q$, where $q$ is a codimension of $\mathcal{F}$.
\end{theo}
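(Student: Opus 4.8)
The plan is to use the Reeb Local Stability Theorem (see \cite{CC}) to replace a neighborhood of the given compact leaf by an explicit model in which the local leaf space is a quotient of a Euclidean transversal, and then to show that on a suitable generic piece this local model embeds as an open subset of $\hls{\mathcal{F}}$ homeomorphic to a ball in $\mathbb{R}^q$. Write $L_0$ for the compact leaf and $\Gamma$ for its finite holonomy group. By local Reeb stability, $L_0$ admits a saturated open neighborhood $W$ in which every leaf is compact with finite holonomy, and $W$ is modeled on the suspension of the holonomy action: there is a $q$-dimensional transversal disk $D^q$ carrying a $\Gamma$-action such that the leaves of $\mathcal{F}|W$ correspond to the $\Gamma$-orbits in $D^q$, and the local leaf space is $D^q/\Gamma$.

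Since $\Gamma$ is finite, I would first pick a transversal point $t_0$ whose orbit is free; such points are generic, because the fixed-point set of each non-identity element of $\Gamma$ has positive codimension. Around $t_0$ choose a small Euclidean ball $B\subset D^q$ with $\gamma B\cap B=\emptyset$ for every $\gamma\neq e$. Over $B$ the foliation is a trivial product $W'\cong L'\times B$ whose leaves $L_t$ ($t\in B$) are compact with trivial holonomy, and $B$ injects into $D^q/\Gamma$. The object of study is then the map $j\colon B\to\hls{\mathcal{F}}$ given by $j(t)=\pi(L_t)$, where $\pi$ is the (continuous) natural projection recalled in this section.

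The heart of the argument is a distance estimate showing that $j$ is a bi-Lipschitz embedding. Projecting paths onto the transverse factor of $W'$ produces a constant $c>0$ with $\dist(L_s,L_t)\geq c\,|s-t|$ for $s,t\in B$, since any path joining the two leaves has transverse length at least the transverse separation; the product structure gives the matching upper bound $\dist(L_s,L_t)\leq C\,|s-t|$, so $\dist(L_s,L_t)\asymp|s-t|$. To pass from $\dist$ to the chain-infimum $\rho$ defining $\hls{\mathcal{F}}$, I would use an \emph{escape-cost} estimate: because $W$ is a tube of compact leaves of definite transverse width, any finite chain of leaves leaving $W'$ must accumulate total distance at least a fixed $\kappa>0$. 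Hence for $s,t$ in a smaller concentric ball the $\rho$-optimal chains stay inside $W'$, where $\rho$ is realized by the transverse distance, giving $\tilde\rho(j(s),j(t))\asymp|s-t|$. In particular $j$ is injective and a homeomorphism onto its image.

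Finally I would show that $j(B)$ is open. Using the same constant $\kappa$, there is $r>0$ so small that any leaf $L'$ with $\tilde\rho(\pi(L'),\pi(L_{t_0}))<r$ is reached from $L_{t_0}$ by a chain of total length $<r<\kappa$; this chain cannot leave $W$, and its transverse displacement is bounded by $r/c$, so $L'$ corresponds to a $\Gamma$-orbit within transverse distance $r/c$ of $t_0$, which (for $r$ small) has a unique representative in $B$. Thus $B_{\tilde\rho}(\pi(L_{t_0}),r)\subseteq j(B)$, and $U=j(B)$ is an open subset of $\hls{\mathcal{F}}$ homeomorphic to the open ball $B\subset\mathbb{R}^q$, as claimed. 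I expect the main obstacle to be the escape-cost estimate in the third step: rigorously bounding global $\rho$-chains below so that optimal chains remain inside the Reeb-stable tube. This is precisely where the compactness of all nearby leaves, and hence the positive transverse width of $W$ supplied by Reeb stability, is indispensable.
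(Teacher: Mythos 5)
Your argument is correct and follows the same route as the paper, whose entire proof is the single sentence that the statement is a direct consequence of the Reeb Stability Theorem; you have simply supplied the details the paper leaves implicit (the reduction to a free orbit of the finite holonomy group, the bi-Lipschitz transverse estimate, and the escape-cost bound forcing near-optimal chains of leaves to stay inside the Reeb-stable tube). The escape-cost step you flag as the main obstacle does go through by the standard argument comparing each $\dist(L_i,L_{i+1})$ with the displacement of a transverse height function on the tube, so there is no genuine gap.
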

\begin{proof}
This is a direct consequence of the Reeb Stability Theorem (see \cite{CC} or \cite{HH}).
\end{proof}


\begin{thebibliography}{XXX}
\bibitem{BBI} D. Burago, Y. Burago, S. Ivanov, A course in metric geometry, AMS 2001.
\bibitem{CC} A. Candel, L. Conlon, Foliations I, AMS 2000.
\bibitem{G} M. Gromov, Metric structures for Riemannian and Non-Riemannian spaces, Birkh\" auser, Boston, 1999.
\bibitem{HH} G. Hector, U. Hirsch, Introduction to the Geometry of Foliations. Parts A and B, F. Vieweg \& Sohn, 1981, 1983.
\bibitem{IW} T. Inaba, P. Walczak, Transverse Hausdorff dimension of codim-1 $C^2$-foliations, Fund. Math. 149 (1996), 239-244.
\bibitem{S} R. Sacksteder, Foliations and pseudogroups, Amer. J. Math. 87 (1965), 79-102.
\bibitem{W} Sz. Walczak, Warped compact foliations, Ann. Pol. Math. 94 (2008), 231-243.
\bibitem{W3} Sz. M. Walczak, O deformacjach foliacji, thesis [in Polish], University of \L od\' z, 2005.

\end{thebibliography}
\end{document}